\newtheorem{theorem}{Theorem}[section]
\newtheorem{lemma}[theorem]{Lemma}
\newtheorem{corollary}[theorem]{Corollary}
\theoremstyle{definition}
\newtheorem{definition}[theorem]{Definition}
\newtheorem{remark}[theorem]{Remark}
\newtheorem{assumption}[theorem]{Assumption}
\numberwithin{equation}{section}
\begin{document}

\baselineskip=15pt

\title[Deformations of parabolic connections and parabolic opers]{Infinitesimal deformations
of parabolic connections and parabolic opers}

\author[I. Biswas]{Indranil Biswas}

\address{School of Mathematics, Tata Institute of Fundamental
Research, Homi Bhabha Road, Mumbai 400005, India}

\email{indranil@math.tifr.res.in}

\author[S. Dumitrescu]{Sorin Dumitrescu}

\address{Universit\'e C\^ote d'Azur, CNRS, LJAD, France}

\email{dumitres@unice.fr}

\author[S. Heller]{Sebastian Heller}

\address{Institute of Differential Geometry,
Leibniz Universit\"at Hannover, Welfengarten 1, 30167 Hannover}

\email{seb.heller@gmail.com}

\author[C. Pauly]{Christian Pauly}

\address{Universit\'e C\^ote d'Azur, CNRS, LJAD, France}

\email{pauly@unice.fr}

\subjclass[2010]{14H60, 33C80, 53C07.}

\keywords{Parabolic oper, logarithmic Atiyah bundle, isomonodromy, monodromy map}

\date{}

\begin{abstract} 
We compute the infinitesimal deformations of quadruples of the form $$(X,\, S,\, E_*,\, D),$$ 
where $(X,\, S)$ is a compact Riemann surface with $n$ marked points, $E_*$ is a parabolic 
vector bundle on $X$ with parabolic structure over $S$, and $D$ is a parabolic connection on 
$E_*$. Using it we compute the infinitesimal deformations of $(X,\, S,\, D)$, where $D$ is 
a parabolic ${\rm SL}(r,{\mathbb C})$--oper on $(X,\, S)$. It is shown that the monodromy 
map, from the moduli space of triples $(X,\, S,\, D)$, where $D$ is a parabolic ${\rm 
SL}(r,{\mathbb C})$--oper on $(X,\, S)$, to the ${\rm SL}(r,{\mathbb C})$--character 
variety of  $X\setminus S$, is an immersion.
\end{abstract}

\maketitle

\tableofcontents

\section{Introduction}\label{se1}

Opers were introduced by Beilinson and Drinfeld \cite{BD1}, \cite{BD2}. They have
turned out to be very important in diverse topics, for example
in geometric Langlands correspondence, nonabelian Hodge theory, some branches of
mathematics physics, differential equations et cetera; see \cite{BF}, \cite{DFKMMN},
\cite{FT}, \cite{FG}, \cite{FG2}, \cite{CS}, \cite{Fr}, \cite{Fr2}, \cite{KSZ}, \cite{MR},
\cite{BSY} and references therein.

Parabolic vector bundles were introduced by Mehta and Seshadri in \cite{MS}. In \cite{BDP}, 
${\rm SL}(r, {\mathbb C})$--opers in the set-up of parabolic vector bundles were 
introduced. The aim here is to further investigate the ${\rm SL}(r, {\mathbb C})$--opers
in the context of parabolic vector bundles.

Being inspired by the interesting work \cite{Sa}, we study the infinitesimal deformations, 
and the monodromy map, of the parabolic ${\rm SL}(r, {\mathbb C})$--opers. Computation of 
the infinitesimal deformations of parabolic opers entails computation of the infinitesimal 
deformations of quadruples of the form $(X,\, S,\, E_*,\, D)$, where $(X,\, S)$ is a compact 
Riemann surface with $n$ marked points, $E_*$ is a parabolic vector bundle on $X$
with parabolic structure over $S$, and $D$ is a parabolic connection on $E_*$.

We introduce the parabolic analog of the Atiyah bundle and the Atiyah exact sequence.
Given a parabolic vector bundle $E_*$ on $(X,\, S)$, its Atiyah bundle $\text{At}(E_*)$ fits in
the short exact sequence of holomorphic vector bundles
\begin{equation}\label{i1}
0\, \longrightarrow\, \text{End}^P(E_*)\, \longrightarrow\,
{\rm At}(E_*) \, \stackrel{\sigma}{\longrightarrow}\,
TX\otimes{\mathcal O}_X(-S) \, \longrightarrow\, 0\, ,
\end{equation}
where $\text{End}^P(E_*)$ denotes the sheaf of quasiparabolic flag preserving endomorphisms
of $E_*$ (see \eqref{e2} for the
quasiparabolic flags); the sequence in \eqref{i1} is the Atiyah exact sequence in the set-up of
parabolic bundles (see Definition \ref{def1}). We show the following:
\begin{enumerate}
\item \textit{A holomorphic splitting of \eqref{i1} produces a logarithmic connection on
the holomorphic vector bundle $E$ underlying $E_*$ such that the residues preserve
the quasiparabolic flags of $E_*$, and}

\item \textit{a parabolic connection on $E_*$ is a  holomorphic splitting of \eqref{i1} 
such that the eigenvalues of the residues are given by the parabolic weights
(see \eqref{e2a} for the parabolic weights).}
\end{enumerate}
(See Lemma \ref{lem1} and Corollary \ref{cor1}.)

We prove the following (see Lemma \ref{lem2}):

\medskip
{\it The infinitesimal deformations of the triple
$(X,\, S,\, E_*)$ are parametrized by $H^1(X,\, {\rm At}(E_*))$.}
\medskip

Now let $D$ be a connection on the parabolic vector bundle $E_*$ over $(X,\, S)$.
We assume that the local monodromy of $D$ around each point of $S$ is semisimple (meaning diagonalizable).
Let
$$
\mathcal{D}_0\, :\, \text{End}^P(E_*)\, \longrightarrow\, \text{End}^n(E_*)\otimes K_X
$$
be the corresponding logarithmic connection on $\text{End}^P(E_*)$, where
$\text{End}^n(E_*)\, \subset\, \text{End}^P(E_*)$ is the 
sheaf of endomorphisms nilpotent with respect to the quasiparabolic flags of $E_*$.
We show that this operator $\mathcal{D}_0$ extends to a holomorphic differential operator
$$
\mathcal{D}\, :\, {\rm At}(E_*)\, \longrightarrow\, \text{End}^n(E_*)\otimes K_X
\otimes{\mathcal O}_X(S)
$$
(see \eqref{e21}). Let
${\mathcal B}_\bullet$ denote the following two-term complex of sheaves on $X$
$$
{\mathcal B}_\bullet\,\,:\,\, {\mathcal B}_0\,=\, {\rm At}(E_*)\,
\stackrel{\mathcal{D}}{\longrightarrow}\, {\mathcal B}_1\,=\,\text{End}^n(E_*)
\otimes K_X\otimes{\mathcal O}_X(S)\, ,
$$
where ${\mathcal B}_i$ is at the $i$-th position.

We prove the following (see Lemma \ref{lem4}):

\medskip
{\it The infinitesimal deformations of the quadruple $(X,\, S,\, E_*,\, D)$
are parametrized by the first hypercohomology
${\mathbb H}^1({\mathcal B}_\bullet)$.}
\medskip

Now assume that $D$ is a parabolic ${\rm SL}(r, {\mathbb C})$--oper (the definition
of a parabolic ${\rm SL}(r, {\mathbb C})$--oper is recalled in Section \ref{se4}). Since $D$
is a parabolic ${\rm SL}(r, {\mathbb C})$--oper, the local monodromy of $D$ around each point of
$S$ is semisimple (see Lemma \ref{os}). In \eqref{e41} and \eqref{g1} we construct the holomorphic
vector bundles $\text{At}_X(r)$ and $\text{ad}^n_1\left(\text{Sym}^{r-1}(\mathcal{E}_*)\right)$
respectively on $X$. Using $D$ we construct a differential operator
$$
{\mathcal D}_B\, :\, \text{At}_X(r)\,
\longrightarrow\,\text{ad}^n_1\left(\text{Sym}^{r-1}(\mathcal{E}_*)\right)\otimes
K_X\otimes{\mathcal O}_X(S)
$$
(see \eqref{e42}). Let ${\mathcal C}_\bullet$ denote the following two-term complex of
sheaves on $X$
$$
{\mathcal C}_\bullet\,\,:\,\, {\mathcal C}_0\,=\, \text{At}_X(r)\,
\stackrel{\mathcal{D}_B}{\longrightarrow}\, {\mathcal C}_1\,=\,\text{ad}^n_1
\left(\text{Sym}^{r-1}(\mathcal{E}_*)\right)\otimes K_X\otimes{\mathcal O}_X(S)\, ,
$$
where ${\mathcal C}_i$ is at the $i$-th position.

We prove the following (see Theorem \ref{thm1}):

\medskip
{\it The space of all infinitesimal deformation of the triple $(X,\, S,\, D)$, where $D$ is
a parabolic ${\rm SL}(r,{\mathbb C})$--oper on $(X,\, S)$, is given by the hypercohomology
${\mathbb H}^1({\mathcal C}_\bullet)$.}
\medskip

A reformulation of the above result is proved in Corollary \ref{cor2}.

In Theorem \ref{thm2} we prove the following:

\medskip
{\it The monodromy map from the moduli space of triples $(X,\, S,\, D)$, where $D$ is
a parabolic ${\rm SL}(r,{\mathbb C})$--oper on $(X,\, S)$, to the ${\rm SL}(r,
{\mathbb C})$--character variety for $X\setminus S$ is an immersion.}
\medskip

The isomonodromy condition defines a holomorphic foliation on the
moduli space of quadruples $(X,\, S,\, E_*,\, D)$, where $D$ is a connection
on the parabolic vector bundle $E_*$ over $(X,\, S)$.
The proof of Theorem \ref{thm2} involves computing this foliation. This
computation is carried out in Lemma \ref{lem5}.

\medskip

In the appendix we give an alternative definition of a parabolic 
${\rm SL}(r,{\mathbb C})$--oper in terms of $\mathbb{R}$-filtered sheaves as
introduced and studied by Maruyama and Yokogawa. This definition is conceptually
closer to the definition of an ordinary ${\rm SL}(r,{\mathbb C})$--oper and clarifies the one given in \cite{BDP}.

\medskip

\section{Holomorphic connections and the Atiyah bundle}

\subsection{Atiyah bundle for parabolic bundles}

Let $X$ be a compact connected Riemann surface. Fix a finite subset of $n$
distinct points
\begin{equation}\label{e1}
S\, :=\, \{x_1,\, \cdots,\, x_n\}\, \subset\, X.
\end{equation}
The reduced effective divisor $x_1+\ldots + x_n$ on $X$ will also be denoted
by $S$.

A quasiparabolic structure on a holomorphic vector bundle $E$ on $X$
is a filtration of subspaces of the fiber $E_{x_i}$ of $E$ over $x_i$
\begin{equation}\label{e2}
E_{x_i}\,=\,E_{i,1}\,\supset\, E_{i,2}\,\supset\, \cdots\,
\supset \,E_{i,l_i} \,\supset\, E_{i,l_i+1}\,=\, 0
\end{equation}
for every $1\, \leq\, i\, \leq\, n$. A parabolic structure on
$E$ is a quasiparabolic structure as above together with a string of real numbers
\begin{equation}\label{e2a}
0\,\leq\, \alpha_{i,1} \,< \,\alpha_{i,2} \,<\,
\cdots \,<\, \alpha_{i,l_i}\,<\, 1
\end{equation}
for every $1\, \leq\, i\, \leq\, n$. The above number $\alpha_{i,j}$
is called the parabolic weight of the subspace $E_{i,j}$ in \eqref{e2}. The divisor $S$
is known as the parabolic divisor. (See \cite{MS}, \cite{MY}, \cite{Bh}.)

A parabolic vector bundle is a holomorphic vector bundle $E$ with a parabolic structure
$(\{E_{i,j}\},\, \{\alpha_{i,j}\})$. For notational convenience,
$(E,\, (\{E_{i,j}\},\, \{\alpha_{i,j}\}))$ will be denoted by $E_*$.

\begin{assumption}\label{asm1}
Throughout we will work with the assumption that all the parabolic weights $\alpha_{i,j}$ are rational numbers.
We assume that each $x_i$ has at least one nonzero parabolic weight.
We also assume that $3({\rm genus}(X)-1)+n \, >\, 0$.
\end{assumption}

For any $1\, \leq\, i\, \leq\, n$ and $1\, \leq\, j \,\leq\, l_i+1$, let
${\mathcal E}_{i,j}\, \longrightarrow\, X$ be the holomorphic vector
bundle defined by the
following short exact sequence of coherent analytic sheaves on $X$:
\begin{equation}\label{e3}
0\, \longrightarrow\, {\mathcal E}_{i,j}\, \longrightarrow\, E \, \longrightarrow\,
E_{x_i}/E_{i,j} \, \longrightarrow\, 0
\end{equation}
(see \eqref{e2}). Since $E_{x_i}/E_{i,j}$ is supported on $x_i$, the subsheaf ${\mathcal E}_{i,j}$ of
$E$ actually coincides with $E$ over the open subset $X\setminus \{x_i\}\, \subset\, X$.

The space of all holomorphic sections, over an open subset $U\, \subset\, X$, of
a holomorphic vector bundle $V\, \longrightarrow\, X$ is denoted by $\Gamma(U,\, V)$.

Let
\begin{equation}\label{e4}
\text{End}^P(E_*)\, \subset\, \text{End}(E)
\end{equation}
be the coherent analytic subsheaf such that for any open subset
$U\, \subset\, X$, the subspace
$$
\Gamma\left(U,\, \text{End}^P(E_*)\right)\, \subset\, \Gamma(U,\, \text{End}(E))
$$
consists of all ${\mathcal O}_U$--linear homomorphisms
$s\, :\, E\big\vert_U \, \longrightarrow\, E\big\vert_U$ satisfying the condition that
$$
s({\mathcal E}_{i,j}\big\vert_U)\, \subset\, {\mathcal E}_{i,j}\big\vert_U
$$
for all $x_i\, \in\, U$ and all $1\, \leq\, j \,\leq\, l_i$, where
${\mathcal E}_{i,j}$ is defined in \eqref{e3}.

Let
\begin{equation}\label{e5}
\text{End}^n(E_*)\, \subset\, \text{End}^P(E_*)
\end{equation}
be the coherent analytic subsheaf consists of all ${\mathcal O}_U$--linear homomorphisms
$s\, :\, E\big\vert_U \, \longrightarrow\, E\big\vert_U$ satisfying the condition that
for any open subset $U\, \subset\, X$,
$$
s({\mathcal E}_{i,j}\big\vert_U)\, \subset\, {\mathcal E}_{i,j+1}\big\vert_U
$$
for all $x_i\, \in\, U$ and all $1\, \leq\, j \,\leq\, l_i$.

\begin{remark}\label{rem1}
It is customary to define $\text{End}^P(E_*)$ as the subsheaf of $\text{End}(E)$ that
preserves the subspace $E_{i,j}\, \subset\,E_{x_i}$ for
all $x_i\, \in\, U$ and all $1\, \leq\, j \,\leq\, l_i$. Similarly,
$\text{End}^n(E_*)$ is defined to be the subsheaf of $\text{End}^P(E_*)$ that
takes any $E_{i,j}$ to $E_{i,j+1}$. While the definitions in
\eqref{e4} and \eqref{e5} are equivalent to these, we will see that the definitions
in \eqref{e4} and \eqref{e5} are more useful for our purpose.
\end{remark}

Using the pairing $(\text{End}(E)\otimes {\mathcal O}(S))^{\otimes 2}\,\longrightarrow\,
{\mathcal O}(2S)$ defined by trace $$A\otimes B\, \longmapsto\, \text{trace}(AB)\, ,$$ we have
$$
\text{End}^P(E_*)^*\,=\, \text{End}^n(E_*)\otimes {\mathcal O}(S)\, .
$$

For any integer $k\, \geq\, 0$, let ${\rm Diff}^k(E,\, E)$ be the holomorphic vector 
bundle on $X$ given by the sheaf of all holomorphic differential operators, of order at 
most $k$, from $E$ to itself. We have the following short exact sequence of holomorphic 
vector bundles on $X$
\begin{equation}\label{e6}
0\, \longrightarrow\, {\rm Diff}^0(E,\, E)\,=\, \text{End}_{{\mathcal O}_X}(E)\,
\longrightarrow\, {\rm Diff}^1(E,\, E)
\end{equation}
$$
\stackrel{\sigma_0}{\longrightarrow}\, \text{End}_{{\mathcal O}_X}(E)
\otimes TX\,=\, E\otimes E^*\otimes TX \, \longrightarrow\, 0\, ,
$$
where $TX$ is the holomorphic tangent bundle of $X$ and $\sigma_0$ is the symbol map
on the first order differential operators. Let
\begin{equation}\label{e7}
{\rm Diff}^1_P(E,\, E)\, \subset\, {\rm Diff}^1(E,\, E)
\end{equation}
be the coherent analytic subsheaf consists of all differential operators
$D_U\, :\, E\big\vert_U \, \longrightarrow\, E\big\vert_U$,
where $U\, \subset\, X$ is any open subset, satisfying the condition that
for any $s\, \in\, \Gamma(U,\, {\mathcal E}_{i,j})$,
$$
D_U(s) \, \in\, \Gamma(U,\, {\mathcal E}_{i,j})
$$
for all $x_i\, \in\, U$ and all $1\, \leq\, j \,\leq\, l_i$, where ${\mathcal E}_{i,j}$
is constructed in \eqref{e3}.

Comparing the definitions given in \eqref{e7} and \eqref{e4} we conclude that
\begin{equation}\label{e11}
{\rm Diff}^1_P(E,\, E)\bigcap \text{End}_{{\mathcal O}_X}(E)\,=\, \text{End}^P(E_*)\, ;
\end{equation}
the above intersection takes place inside ${\rm Diff}^1(E,\, E)$ (see \eqref{e6}
and \eqref{e7}). Consider the subsheaf
\begin{equation}\label{e13}
TX\otimes{\mathcal O}_X(-S)\, \subset\, TX\,=\, {\rm Id}_E\otimes TX
\, \subset\, \text{End}(E)\otimes TX\, ,
\end{equation}
where $S$ is the divisor in \eqref{e1}. Define
\begin{equation}\label{e9}
{\rm At}(E_*)\, :=\, {\rm Diff}^1_P(E,\, E)\bigcap \sigma^{-1}_0(TX\otimes{\mathcal O}_X(-S))
\, \subset\, {\rm Diff}^1(E,\, E)\, ,
\end{equation}
where $\sigma_0$ is the projection in \eqref{e6} and ${\rm Diff}^1_P(E,\, E)\, \subset\,
{\rm Diff}^1(E,\, E)$ is the subsheaf in \eqref{e7}. Now from \eqref{e6} and \eqref{e11}
we get the following short exact sequence of holomorphic vector bundles on $X$
\begin{equation}\label{e10}
0\, \longrightarrow\, \text{End}^P(E_*)\, \longrightarrow\,
{\rm At}(E_*) \, \stackrel{\sigma}{\longrightarrow}\,
TX\otimes{\mathcal O}_X(-S) \, \longrightarrow\, 0\, ,
\end{equation}
where $\sigma$ is the restriction of $\sigma_0$ to ${\rm At}(E_*)\, \subset\,
{\rm Diff}^1(E,\, E)$. It is straightforward to check that the homomorphism
$\sigma$ in \eqref{e10} is surjective; indeed, this follows immediately from
the fact that $TX\otimes{\mathcal O}_X(-S) \,\subset\, \sigma_0({\rm Diff}^1_P(E,\, E))$.

\begin{definition}\label{def1}
The vector bundle ${\rm At}(E_*)$ in \eqref{e9} will be called the \textit{Atiyah
bundle} for the parabolic bundle $E_*$, and the sequence in \eqref{e10} will be called
the \textit{Atiyah exact sequence} for the parabolic bundle $E_*$.
\end{definition}

When $S$ is the zero divisor (meaning $n\,=\, 0$ in \eqref{e1}), then ${\rm At}(E_*)$ is the
usual Atiyah bundle $\text{At}(E)$ for $E$, and \eqref{e10} is the usual Atiyah
exact sequence for $E$. (See \cite{At}.)

\subsection{Holomorphic connections on a parabolic bundle}

The holomorphic cotangent bundle of $X$ will be denoted by $K_X$.

Let $V$ be a holomorphic
vector bundle on $X$. A \textit{logarithmic connection} on $V$ singular over
$S$ is a holomorphic differential operator of order one
$$
D\, :\, V\, \longrightarrow\, V\otimes K_X\otimes {\mathcal O}_X(S)
$$
satisfying the Leibniz identity, which says that
\begin{equation}\label{e12}
D(fs)\,=\, fD(s)+ s\otimes df
\end{equation}
for any locally
defined holomorphic function $f$ on $X$ and any locally defined holomorphic section $s$ of $V$.

We note that any logarithmic connection on $V$ is flat because $\Omega^{2,0}_X\,=\,0$.

Take a point $x_i\, \in\, S$. The fiber of
$K_X\otimes{\mathcal O}_X(S)$ over $x_i$ is identified with $\mathbb C$ by the Poincar\'e
adjunction formula \cite[p.~146]{GH}. To explain this isomorphism
\begin{equation}\label{pa}
(K_X\otimes{\mathcal O}_X(S))_{x_i} \, \stackrel{\sim}{\longrightarrow}\, {\mathbb C}\, ,
\end{equation}
let $z$ be a holomorphic coordinate function on $X$ defined on an analytic
open neighborhood of $x_i$ such that $z(x_i)\,=\, 0$. Then we have the isomorphism
${\mathbb C}\, \longrightarrow\, (K_X\otimes{\mathcal O}_X(S))_{x_i}$ that sends any
$c\, \in\, \mathbb C$
to $c\cdot \frac{dz}{z}(x_i)\,\in\, (K_X\otimes{\mathcal O}_X(S))_{x_i}$. It is straightforward
to check that this map ${\mathbb C}\, \longrightarrow\, (K_X\otimes{\mathcal O}_X(S))_{x_i}$
is actually independent of the choice of the above holomorphic coordinate function $z$.

Let $D_V\, :\, V\, \longrightarrow\, V\otimes K_X\otimes{\mathcal O}_X(S)$ be a logarithmic
connection on $V$. Using the Leibniz identity in
\eqref{e12} it is straightforward to deduce that the composition of homomorphisms
\begin{equation}\label{ec1}
V\, \xrightarrow{\,\ D_V\,\ }\, V\otimes K_X\otimes{\mathcal O}_X(S) \, \longrightarrow\,
(V\otimes K_X\otimes{\mathcal O}_X(S))_{x_i}\,\stackrel{\sim}{\longrightarrow}\, V_{x_i}
\end{equation}
is ${\mathcal O}_X$--linear; the above isomorphism
$(V\otimes K_X\otimes{\mathcal O}_X(S))_{x_i}\,
\stackrel{\sim}{\longrightarrow}\, V_{x_i}$ is given by the isomorphism in \eqref{pa}.
Therefore, the composition of homomorphisms in \eqref{ec1} produces a
$\mathbb C$--linear homomorphism
\begin{equation}\label{er}
{\rm Res}(D_V,\,x_i)\, :\, V_{x_i}\, \longrightarrow\, V_{x_i}\, ,
\end{equation}
which is called the \textit{residue} of $D_V$ at $x_i$; see \cite{De}.
If $\lambda_1,\, \cdots ,\, \lambda_r$ are the generalized eigenvalues of ${\rm Res}(D_V,\,x_i)$
with multiplicity, where $r\,=\, \text{rank}(V)$, then the generalized eigenvalues of the local
monodromy of $D$ around $x_i$ are
\begin{equation}\label{erm}
\exp(-2\pi\sqrt{-1}\lambda_1),\, \exp(-2\pi\sqrt{-1}\lambda_1),\, \cdots ,\, \exp(-2\pi\sqrt{-1}\lambda_r
\end{equation}
\cite{De}.

We now recall another description of the logarithmic connections on $V$. Consider
the short exact sequence in \eqref{e6}
\begin{equation}\label{e14}
0\, \longrightarrow\, \text{End}(V)\, \longrightarrow\,
{\rm Diff}^1(V,\, V) \, \stackrel{\widehat{\sigma}_V}{\longrightarrow}\,
TX\otimes \text{End}(V)\, \longrightarrow\, 0
\end{equation}
for $V$. Define
$$
\text{At}(V,S)\, :=\, \widehat{\sigma}^{-1}_V(TX\otimes{\mathcal O}_X(-S))\, \subset\,
{\rm Diff}^1(V,\, V)\, ,
$$
where $TX\otimes{\mathcal O}_X(-S)\, \subset\, TX\otimes \text{End}(V)$ is
the subbundle defined as in \eqref{e13} for $V$. So from \eqref{e14} we have
the short exact sequence of holomorphic vector bundles
\begin{equation}\label{e15}
0\, \longrightarrow\, \text{End}(V)\, \longrightarrow\,
\text{At}(V,S) \, \stackrel{\sigma_V}{\longrightarrow}\,
TX\otimes{\mathcal O}_X(-S) \, \longrightarrow\, 0\, ,
\end{equation}
where $\sigma_V$ is the restriction of the projection $\widehat{\sigma}_V$ in
\eqref{e14} to the subsheaf $\text{At}(V,S)\, \subset\, {\rm Diff}^1(V,\, V)$.
A logarithmic connection on $V$ is a holomorphic splitting of the short exact sequence
in \eqref{e15}; in other words, a logarithmic connection on $V$ is a holomorphic
homomorphism of vector bundles
$$
h\, :\, TX\otimes{\mathcal O}_X(-S)\, \longrightarrow\,\text{At}(V,S)
$$
such that $\sigma_V\circ h\,=\, {\rm Id}_{TX\otimes{\mathcal O}_X(-S)}$, where
$\sigma_V$ is the homomorphism in \eqref{e15}.

Take a parabolic vector bundle $E_*\,=\,(E,\, (\{E_{i,j}\},\, \{\alpha_{i,j}\}))$;
see \eqref{e2}, \eqref{e2a}.

A \textit{connection} on $E_*$ is a logarithmic connection $D$ on $E$, singular over
$S$, such that
\begin{enumerate}
\item $\text{Res}(D,x_i)(E_{i,j})\, \subset\, E_{i,j}$ for all $1\,\leq\, j\,\leq\, l_i$,
$1\,\leq\, i\, \leq\, n$ (see \eqref{e2}), and

\item the endomorphism of $E_{i,j}/E_{i,j+1}$ induced by $\text{Res}(D,x_i)$ coincides with
multiplication by the parabolic weight $\alpha_{i,j}$ for all $1\,\leq\, j\,\leq\, l_i$,
$1\,\leq\, i\, \leq\, n$.
\end{enumerate}
(See \cite[Section~2.2]{BL}.) A necessary and sufficient condition for
$E_*$ to admit a connection is given in \cite{BL}. The condition in question says that
$E_*$ admits a connection if and only if the parabolic degree of every direct summand of
$E_*$ is zero \cite[p.~594, Theorem 1.1]{BL}.

A \textit{holomorphic splitting} of the Atiyah exact sequence in \eqref{e10}
(see Definition \ref{def1}) is a holomorphic homomorphism of vector bundles
$$
\mathbf{h}\, :\, TX\otimes{\mathcal O}_X(-S)\, \longrightarrow\, {\rm At}(E_*)
$$
such that
\begin{equation}\label{e16}
\sigma\circ \mathbf{h}\,=\, {\rm Id}_{TX\otimes{\mathcal O}_X(-S)}\, ,
\end{equation}
where $\sigma$ is the projection in \eqref{e10}.

\begin{lemma}\label{lem1}
Giving a holomorphic splitting $\mathbf{h}$ of the Atiyah exact sequence for $E_*$ (see
\eqref{e16}) is equivalent to giving a logarithmic connection $D$ on $E$ satisfying the
following condition: for every $x_i\, \in\, S$,
$$
{\rm Res}(D,\, x_i)(E_{i,j})\, \subset\, E_{i,j}
$$
for all $1\, \leq\, j\, \leq\, l_i$ (see \eqref{e2}), where ${\rm Res}(D,\, x_i)(E_{i,j})$
is constructed in \eqref{er}.
\end{lemma}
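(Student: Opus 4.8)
The plan is to set up an explicit correspondence between splittings $\mathbf{h}$ of \eqref{e10} and logarithmic connections with flag-preserving residue, and then check it is a bijection. First I would unwind the definition: by \eqref{e9}, ${\rm At}(E_*)$ is the subsheaf of ${\rm Diff}^1(E,\,E)$ consisting of those first-order operators $D_U$ that preserve each ${\mathcal E}_{i,j}$ (from \eqref{e7}) \emph{and} whose symbol $\sigma_0(D_U)$ lies in $TX\otimes{\mathcal O}_X(-S)\,\subset\, TX\otimes{\rm End}(E)$, i.e. whose symbol is a vector field vanishing at $S$, acting as a scalar on $E$. A splitting $\mathbf{h}$ is then precisely a choice, for each local section $v$ of $TX\otimes{\mathcal O}_X(-S)$, of such an operator $\mathbf{h}(v)$ with $\sigma(\mathbf{h}(v))=v$, depending ${\mathcal O}_X$-linearly on $v$. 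Given $\mathbf{h}$, I would construct $D$ as follows: $TX\otimes{\mathcal O}_X(-S)$ is canonically isomorphic to ${\mathcal Hom}(K_X\otimes{\mathcal O}_X(S),\,{\mathcal O}_X)$, so contracting $\mathbf{h}$ against $K_X\otimes{\mathcal O}_X(S)$ produces a map $E\,\longrightarrow\, E\otimes K_X\otimes{\mathcal O}_X(S)$; more concretely, in a local coordinate $z$ centered at a point of $S$ (or any local coordinate away from $S$), write $\mathbf{h}(z\partial_z)$ as a first-order operator with symbol $z\partial_z$, and declare $D(s):=\mathbf{h}(z\partial_z)(s)\otimes\frac{dz}{z}$. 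The Leibniz rule \eqref{e12} for $D$ is equivalent to the statement that $\mathbf{h}(z\partial_z)$ has symbol exactly $z\partial_z$ (the symbol condition $\sigma\circ\mathbf{h}={\rm Id}$ in \eqref{e16}), since the failure of Leibniz for a first-order operator is measured by its symbol, and $z\partial_z(f)\cdot s$ is exactly what appears. ${\mathcal O}_X$-linearity of $\mathbf{h}$, together with the coordinate-independence recorded in \eqref{pa}, makes $D$ well-defined globally as a logarithmic connection on $E$.

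Next I would verify the residue condition. At $x_i\in S$, by the construction of the residue in \eqref{ec1}--\eqref{er}, ${\rm Res}(D,\,x_i)$ is obtained from $D$ by composing with evaluation at $x_i$ of the ${\mathcal O}_X(S)$-twisted part; in coordinates it is simply the fiberwise endomorphism $s\mapsto \mathbf{h}(z\partial_z)(\widetilde s)(x_i)$ where $\widetilde s$ is any local extension of $s\in E_{x_i}$. Because $\mathbf{h}(z\partial_z)$ lies in ${\rm At}(E_*)$, it preserves each subsheaf ${\mathcal E}_{i,j}$ from \eqref{e3}. The key local computation is that an operator $P\in{\rm Diff}^1_P(E,\,E)$ with symbol vanishing at $x_i$ induces, on the fiber $E_{x_i}$, an endomorphism preserving the flag $E_{x_i}=E_{i,1}\supset\cdots\supset E_{i,l_i}$: indeed ${\mathcal E}_{i,j}$ maps onto $E_{i,j}$ under evaluation at $x_i$ (this is immediate from \eqref{e3}, since ${\mathcal E}_{i,j}/{\mathfrak m}_{x_i}{\mathcal E}_{i,j}\twoheadrightarrow E_{i,j}$ and a rank count using that $E/{\mathcal E}_{i,j}$ is supported at $x_i$ with length $\dim E_{x_i}-\dim E_{i,j}$ gives equality), so preservation of ${\mathcal E}_{i,j}$ descends to preservation of $E_{i,j}$ on the fiber — this is exactly the content of Remark \ref{rem1}. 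Hence ${\rm Res}(D,\,x_i)(E_{i,j})\subset E_{i,j}$, as required.

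Conversely, given a logarithmic connection $D$ on $E$ with flag-preserving residues, I would reverse the construction: for a local vector field $v=g\,z\partial_z\in TX\otimes{\mathcal O}_X(-S)$ set $\mathbf{h}(v)(s):=\langle D(s),\,v\rangle$, the contraction of $D(s)\in E\otimes K_X\otimes{\mathcal O}_X(S)$ with $v\in (K_X\otimes{\mathcal O}_X(S))^*$. The Leibniz identity makes $\mathbf{h}(v)$ a first-order operator with symbol $v$, so $\sigma\circ\mathbf{h}={\rm Id}$; and ${\mathcal O}_X$-linearity in $v$ is clear. The only thing needing proof is that $\mathbf{h}(v)\in{\rm At}(E_*)$, i.e. that $\mathbf{h}(z\partial_z)$ preserves each ${\mathcal E}_{i,j}$ near $x_i$. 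This is the point where the residue hypothesis is used in the nontrivial direction: writing $D=d+A\frac{dz}{z}$ locally with $A(0)={\rm Res}(D,x_i)$, one has $\mathbf{h}(z\partial_z)(s)=z\frac{ds}{dz}+A s$, and for $s\in\Gamma(U,{\mathcal E}_{i,j})$ one must see $z\frac{ds}{dz}+As\in\Gamma(U,{\mathcal E}_{i,j})$; the term $z\frac{ds}{dz}$ is harmless because the ${\mathcal O}$-module ${\mathcal E}_{i,j}$ is, by \eqref{e3}, $\{s\in E:\ s(x_i)\in E_{i,j}\}$ and $z\frac{ds}{dz}$ vanishes at $x_i$, while $As$ satisfies $(As)(x_i)={\rm Res}(D,x_i)(s(x_i))\in E_{i,j}$ by the hypothesis that the residue preserves $E_{i,j}$. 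Thus $\mathbf{h}$ is a splitting. Finally I would note the two constructions are mutually inverse — essentially by construction, since both are pointwise contraction/extension identities — which gives the asserted equivalence. I expect the only delicate step to be the fiber-versus-subsheaf bookkeeping in the middle paragraph, i.e. checking carefully that preservation of the sheaves ${\mathcal E}_{i,j}$ is genuinely equivalent to preservation of the flags $E_{i,j}$ on fibers; everything else is a routine manipulation of the Leibniz rule and symbols.
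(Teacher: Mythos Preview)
Your proof is correct and follows essentially the same approach as the paper's: both directions hinge on the equivalence between an operator lying in ${\rm Diff}^1_P(E,E)$ (preservation of the subsheaves ${\mathcal E}_{i,j}$) and its residue preserving the flags $E_{i,j}$. The paper's argument is more abstract --- it invokes the pre-established description of logarithmic connections as splittings of the sequence \eqref{e15} for ${\rm At}(E,S)$, and then simply asserts that the residue condition \eqref{e18} forces the image of $\mathbf{h}$ to land in ${\rm Diff}^1_P(E,E)\cap{\rm At}(E,S)={\rm At}(E_*)$ --- whereas you carry out the verification explicitly in a local coordinate, writing $\mathbf{h}(z\partial_z)(s)=z\frac{ds}{dz}+As$ and checking each term. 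Your explicit computation is exactly what the paper suppresses, so the two proofs are the same in content; yours just unpacks the step the paper leaves to the reader.
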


\begin{proof}
Let $\mathbf{h}$ be a holomorphic splitting of the Atiyah exact sequence in \eqref{e10}. In other
words, $\mathbf{h}\, :\, TX\otimes{\mathcal O}_X(-S)\, \longrightarrow\, {\rm At}(E_*)$
is a holomorphic homomorphism such that \eqref{e16} holds. Recall from \eqref{e9} that
${\rm At}(E_*)\, \subset\, {\rm Diff}^1_P(E,\, E)$. The
composition of homomorphisms
$$
TX\otimes{\mathcal O}_X(-S)\, \stackrel{\mathbf{h}}{\longrightarrow}\, {\rm At}(E_*)\,
\hookrightarrow\, {\rm Diff}^1_P(E,\, E)
$$
will be denoted by $\widetilde{\mathbf{h}}$. Let $\widetilde{\mathbf{h}}'$ denote the
composition of homomorphisms
\begin{equation}\label{e17}
TX\otimes{\mathcal O}_X(-S)\, \stackrel{\widetilde{\mathbf{h}}}{\longrightarrow}\,
{\rm Diff}^1_P(E,\, E) \,\stackrel{\iota_0}{\hookrightarrow}\, {\rm Diff}^1(E,\, E)
\end{equation}
(see \eqref{e7} for the inclusion map $\iota_0$). Since \eqref{e16} holds, we conclude that
$$
\sigma_0\circ \widetilde{\mathbf{h}}'\,=\, {\rm Id}_{TX\otimes{\mathcal O}_X(-S)}\, ,
$$
where $\sigma_0$ is the projection in \eqref{e6}. Therefore, $\widetilde{\mathbf{h}}'$
is a logarithmic connection on $E$; this logarithmic connection on $E$
will be denoted by $D$. Since
$$
\widetilde{\mathbf{h}}'\,=\, \iota_0\circ \widetilde{\mathbf{h}}
$$
(see \eqref{e17}), the residues of the above defined logarithmic connection $D$
satisfy the following condition: for every $x_i\, \in\, S$,
$$
{\rm Res}(D,\, x_i)(E_{i,j})\, \subset\, E_{i,j}
$$
for all $1\, \leq\, j\, \leq\, l_i$.

To prove the converse, let $D$ be a logarithmic connection on $E$ such that
\begin{equation}\label{e18}
{\rm Res}(D,\, x_i)(E_{i,j})\, \subset\, E_{i,j}
\end{equation}
for all $x_i\, \in\, S$ and all $1\, \leq\, j\, \leq\, l_i$. So $D$ gives a
holomorphic homomorphism
$$
\mathbf{h}\, :\, TX\otimes{\mathcal O}_X(-S)\, \longrightarrow\,
\text{At}(E,S)
$$
(see \eqref{e15} for $\text{At}(E,S)$) such that the composition of homomorphisms
$$
TX\otimes{\mathcal O}_X(-S)\, \stackrel{\mathbf{h}}{\longrightarrow}\,
\text{At}(E,S)\, \longrightarrow\, TX\otimes{\mathcal O}_X(-S)
$$
coincides with the identity map of $TX\otimes{\mathcal O}_X(-S)$; see \eqref{e15}
for the above projection $\text{At}(E,S)\, \longrightarrow\, TX\otimes{\mathcal O}_X(-S)$.
The given condition in \eqref{e18} implies that
$$
\mathbf{h}(TX\otimes{\mathcal O}_X(-S))\, \subset\,
{\rm Diff}^1_P(E,\, E)\bigcap\text{At}(E,S)\,=\, {\rm At}(E_*)\, ;
$$
note that from \eqref{e9} we have
$$
{\rm At}(E_*)\, :=\, {\rm Diff}^1_P(E,\, E)
\bigcap \sigma^{-1}_0(TX\otimes{\mathcal O}_X(-S))\, \subset\,
\sigma^{-1}_0(TX\otimes{\mathcal O}_X(-S))\,=\, \text{At}(E,S)\, .
$$
Consequently, the homomorphism $\mathbf{h}$ gives a holomorphic splitting,
as in \eqref{e16}, of the Atiyah exact sequence for $E_*$.
\end{proof}

\begin{corollary}\label{cor1}
Giving a connection on the parabolic bundle $E_*$ is equivalent to giving a holomorphic
splitting $\mathbf{h}$ of the Atiyah exact sequence for $E_*$ (see
\eqref{e10}) such that the logarithmic connection $D$ on $E$ associated to
$\mathbf{h}$ (see Lemma \ref{lem1}) satisfies the following condition: for every
$x_i\, \in\, S$, the residue ${\rm Res}(D,\, x_i)$ induces the endomorphism
$\alpha_{i,j}\cdot {\rm Id}_{E_{i,j}/E_{i,j+1}}$ of the quotient space
$E_{i,j}/E_{i,j+1}$ for all $1\, \leq\, j\, \leq\, l_i$.
\end{corollary}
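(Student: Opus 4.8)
The plan is to deduce this directly from Lemma \ref{lem1} together with the definition of a connection on a parabolic bundle $E_*$ recalled just above it. By Lemma \ref{lem1}, the assignment $\mathbf{h}\,\mapsto\, D$, sending a holomorphic splitting of the Atiyah exact sequence \eqref{e10} to the logarithmic connection on $E$ that it induces, is a bijection onto the set of logarithmic connections $D$ on $E$, singular over $S$, whose residues satisfy ${\rm Res}(D,\, x_i)(E_{i,j})\, \subset\, E_{i,j}$ for all $x_i\, \in\, S$ and all $1\, \leq\, j\, \leq\, l_i$. This is precisely condition (1) in the definition of a connection on $E_*$, so that condition is automatically built into the correspondence of Lemma \ref{lem1} and needs no separate verification.

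It then remains to observe that, under this bijection, imposing on the $D$-side the additional requirement that the endomorphism of $E_{i,j}/E_{i,j+1}$ induced by ${\rm Res}(D,\, x_i)$ be $\alpha_{i,j}\cdot {\rm Id}_{E_{i,j}/E_{i,j+1}}$ for all $1\, \leq\, j\, \leq\, l_i$ and all $x_i\, \in\, S$, which is exactly condition (2) in the definition of a connection on $E_*$, corresponds to imposing the stated condition on the $\mathbf{h}$-side, since $\mathbf{h}$ and its associated $D$ determine each other. Hence the set of splittings $\mathbf{h}$ whose associated logarithmic connection $D$ satisfies both conditions (1) and (2) is in natural bijection with the set of connections on $E_*$, which is the assertion.

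No genuine obstacle arises here: the substantive content is entirely contained in Lemma \ref{lem1}, and the corollary is simply a matter of matching the two residue conditions in the definition of a parabolic connection against the characterization of holomorphic splittings already obtained. The only point worth making explicit in the write-up is that condition (1) is subsumed by the correspondence of Lemma \ref{lem1}, so that condition (2) is the sole extra constraint that must be transported across the bijection.
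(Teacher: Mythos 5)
Your proposal is correct and follows essentially the same route as the paper: the paper's own proof likewise observes that Lemma \ref{lem1} already supplies the flag-preservation of the residues (condition (1)), so that the quotient endomorphisms are defined, and then concludes by matching condition (2) of the definition of a parabolic connection against the stated residue condition on the $\mathbf{h}$-side.
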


\begin{proof}
Note that from Lemma \ref{lem1} we know that
$$
{\rm Res}(D,\, x_i)(E_{i,j})\, \subset\, E_{i,j}
$$
for all $x_i\, \in\, S$ and all $1\, \leq\, j\, \leq\, l_i$. Therefore,
${\rm Res}(D,\, x_i)$ induces an endomorphism of the quotient space
$E_{i,j}/E_{i,j+1}$.

The result follows from Lemma \ref{lem1} and the definition of a connection on $E_*$.
\end{proof}

\subsection{A homomorphism associated to a connection}

Let $E_* \,=\, (E,\, (\{E_{i,j}\},\, \{\alpha_{i,j}\}))$ be a parabolic vector bundle
on $X$ with parabolic divisor $S$. Let $D$ be a connection on $E_*$.
Using $D$ we will construct a first order holomorphic differential operator
\begin{equation}\label{e19}
\mathcal{D}_0\, :\, \text{End}^P(E_*)\, \longrightarrow\, \text{End}^n(E_*)\otimes K_X
\otimes{\mathcal O}_X(S)
\end{equation}
(see \eqref{e4} and \eqref{e5} for $\text{End}^P(E_*)$ and $\text{End}^n(E_*)$ respectively).

To construct $\mathcal{D}_0$, take any holomorphic section $\Phi\, \in\, \Gamma(U,\, 
\text{End}(E))$, where $U\, \subset\, X$
is any open subset. Let
$$
A_U\, :\, \Gamma(U,\, E)\,\longrightarrow\, \Gamma(U,\, E\otimes K_X
\otimes{\mathcal O}_X(S))
$$
be the homomorphism defined by
$$
s\, \longmapsto\, D(\Phi(s))- (\Phi\otimes {\rm Id}_{K_X
\otimes{\mathcal O}_X(S)})(D(s))\, .
$$
This $A_U$ is evidently ${\mathcal O}_U$--linear. Hence we have
$$
A_U\, \in\, \Gamma(U,\, \text{End}(E)\otimes K_X\otimes{\mathcal O}_X(S))\, .
$$
{}From the properties of $D$ it follows that
$$
A_U \, \in\, \Gamma(U,\, \text{End}^n(E_*)\otimes K_X \otimes{\mathcal O}_X(S))
\, \subset\, \Gamma(U,\, \text{End}(E)\otimes K_X\otimes{\mathcal O}_X(S))
$$
if $\Phi\, \in\, \Gamma(U,\, \text{End}^P(E_*))$. The homomorphism $\mathcal{D}_0$
in \eqref{e19} is defined by $\Phi\, \longmapsto\, A_U$.

Recall from \eqref{e10} that $\text{End}^P(E_*)$ is a holomorphic subbundle of
${\rm At}(E_*)$. We will now extend $\mathcal{D}_0$ in \eqref{e19} to a
first order holomorphic differential operator
\begin{equation}\label{e21}
\mathcal{D}\, :\, {\rm At}(E_*)\, \longrightarrow\, \text{End}^n(E_*)\otimes K_X
\otimes{\mathcal O}_X(S).
\end{equation}

To construct $\mathcal{D}$, take holomorphic sections $$\Phi\, \in\, \Gamma(U,\,
{\rm At}(E_*))\ \ \text{ and }\ \ v\, \in\, \Gamma(U,\, TX\otimes{\mathcal O}_X(-S)),$$
where $U\, \subset\, X$ is any open subset. Denote
\begin{equation}\label{ew}
w\, :=\, \sigma(\Phi)\, \in\, \Gamma(U,\, TX\otimes{\mathcal O}_X(-S)),
\end{equation}
where $\sigma$ is the projection in \eqref{e10}. Take any $s\, \in\, \Gamma(U,\, E)$. So
$$
\Phi(s)\, \in\, \Gamma(U,\, E)
$$
(recall from \eqref{e9} that ${\rm At}(E_*)\, \subset\, {\rm Diff}^1(E,\, E)$), and hence
$$D(\Phi(s))\, \in\, \Gamma(U,\, E\otimes K_X\otimes{\mathcal O}_X(S))\, .$$ Therefore,
we have
\begin{equation}\label{e22}
\langle D(\Phi(s)),\, v\rangle \, \in\, \Gamma(U,\, E)\, ,
\end{equation}
where $\langle -,\, -\rangle$ is the natural duality pairing
\begin{equation}\label{dp}
(K_X\otimes{\mathcal O}_X(S))\otimes (TX\otimes{\mathcal O}_X(-S))\, \longrightarrow\,
{\mathcal O}_X\, .
\end{equation}

We have $\langle D(s),\, v\rangle \, \in\, \Gamma(U,\, E)$, so
\begin{equation}\label{e23}
\Phi(\langle D(s),\, v\rangle)\, \in\, \Gamma(U,\, E)\, ,
\end{equation}
where $\langle -,\, -\rangle$ is the pairing in \eqref{dp}.
Consider the Lie bracket of vector fields
$$
[v,\, w]\, \in\, \Gamma(U,\, TX\otimes{\mathcal O}_X(-S))\, ,
$$
where $w$ is defined in \eqref{ew}. We have
\begin{equation}\label{e24}
\langle D(s),\, [v,\, w]\rangle\, \in\, \Gamma(U,\, E)\, .
\end{equation}

Let $B_U\, :\, \Gamma(U,\, E)\, \longrightarrow\, \Gamma(U,\, E)$
be the homomorphism defined by
$$
s\, \longmapsto\, \langle D(\Phi(s)),\, v\rangle- \Phi(\langle D(s),\, v\rangle)
-\langle D(s),\, [v,\, w]\rangle
$$
(see \eqref{e22}, \eqref{e23} and \eqref{e24}). This homomorphism $B_U$ is evidently
${\mathcal O}_U$--linear, and hence we have
\begin{equation}\label{ec2}
B_U\, \in\, \Gamma(U,\, \text{End}(E))\, .
\end{equation}
It is now straightforward to check that
$$
B_U\, \in\, \Gamma(U,\, \text{End}^n(E_*))\, \subset\, \Gamma(U,\, \text{End}(E))\, .
$$

The homomorphism $\mathcal D$ in \eqref{e21} is uniquely defined by the following
property: For any open subset $U\, \subset\, X$ and sections 
$$\Phi\, \in\, \Gamma(U,\,
{\rm At}(E_*))\ \ \text{ and }\ \ v\, \in\, \Gamma(U,\, TX\otimes{\mathcal O}_X(-S)),$$
the equality
$$
\langle {\mathcal D}(\Phi),\, v\rangle\,=\, B_U
$$
holds, where $B_U$ is constructed in \eqref{ec2} from $\Phi$ and $v$ using $D$,
and $\langle -,\, -\rangle$ is the pairing in \eqref{dp}.

{}From the constructions of $\mathcal D$ and $\mathcal{D}_0$
(see \eqref{e19}) it follows immediately
that the restriction of $\mathcal{D}$ to $\text{End}^P(E_*)\, \subset\, {\rm At}(E_*)$
actually coincides with $\mathcal{D}_0$.

\section{Infinitesimal deformations and isomonodromy}

\subsection{Infinitesimal deformations}

Let $E_*$ be a parabolic vector bundle over $X$ with parabolic divisor $S$. Then the
infinitesimal deformations of $E_*$, keeping the pair $(X,\, S)$ fixed,
are parametrized by $H^1(X,\, \text{End}^P(E_*))$, where $\text{End}^P(E_*)$ is defined
in \eqref{e4} \cite{MS}. We recall that the infinitesimal deformations of the pair
$(X,\, S)$ are parametrized by $H^1(X,\, TX\otimes{\mathcal O}_X(-S))$.

\begin{lemma}\label{lem2}
The infinitesimal deformations of the triple
$$(X,\, S,\, E_*)$$ are parametrized by $H^1(X,\, {\rm At}(E_*))$, where ${\rm At}(E_*)$
is defined in \eqref{e9}.
\end{lemma}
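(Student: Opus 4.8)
The plan is to prove Lemma \ref{lem2} by the standard deformation-theoretic route: identify the infinitesimal deformations of the triple $(X,\, S,\, E_*)$ with $H^1$ of a sheaf that records simultaneously the deformations of the pointed curve and of the parabolic bundle, and then show this sheaf is precisely $\text{At}(E_*)$. Concretely, I would work with a \v{C}ech description relative to an analytic open cover $\{U_\alpha\}$ of $X$, with at most one marked point $x_i$ in each $U_\alpha$. A first-order deformation of $(X,\, S,\, E_*)$ consists of: (i) a deformation of $(X,\, S)$, given by a \v{C}ech $1$-cocycle $\{v_{\alpha\beta}\}$ with values in $TX\otimes\mathcal{O}_X(-S)$ (the twist by $\mathcal{O}_X(-S)$ expressing that the vector fields must vanish at the marked points so as to preserve $S$); and (ii) a compatible deformation of the parabolic bundle $E_*$ over this deformed curve, which \v{C}ech-theoretically amounts to a collection of first-order bundle automorphisms $\{\Phi_{\alpha\beta}\}$ gluing the locally trivial deformations, subject to the quasiparabolic flags being preserved.

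The key point is that, over a double overlap $U_{\alpha\beta}$, the transition data is encoded by a first-order differential operator rather than merely an endomorphism: deforming the curve moves the transition functions of $E$ in the base direction $v_{\alpha\beta}$, so the bundle gluing involves the derivative of sections along $v_{\alpha\beta}$ together with an endomorphism correction. This is exactly a local section of $\text{Diff}^1_P(E,\, E)$ whose symbol is $v_{\alpha\beta}\in TX\otimes\mathcal{O}_X(-S)$, i.e.\ a local section of $\text{At}(E_*)$ as defined in \eqref{e9}. Thus the cocycle condition on overlaps $U_{\alpha\beta\gamma}$ becomes the $1$-cocycle condition for $\text{At}(E_*)$, and two deformations are equivalent precisely when the cocycles differ by a coboundary in $\text{At}(E_*)$; hence the infinitesimal deformation space is $H^1(X,\, \text{At}(E_*))$. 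The compatibility of this identification with the Atiyah exact sequence \eqref{e10} is a useful sanity check: applying $\sigma$ to a deformation cocycle of the triple recovers the underlying deformation cocycle $\{v_{\alpha\beta}\}$ of $(X,\, S)$ in $H^1(X,\, TX\otimes\mathcal{O}_X(-S))$, while the kernel part recovers $H^1(X,\, \text{End}^P(E_*))$, the deformations of $E_*$ with $(X,\, S)$ fixed; this matches the long exact cohomology sequence of \eqref{e10} and is consistent with the two known cases recalled just before the lemma.

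The main obstacle I anticipate is the bookkeeping near the marked points: one must check that, in a chart $U_\alpha$ containing $x_i$, the deformation data genuinely lies in the parabolic Atiyah sheaf, meaning the symbol vanishes to the right order at $x_i$ (forcing the $TX\otimes\mathcal{O}_X(-S)$ twist) and the endomorphism part preserves each $\mathcal{E}_{i,j}$ of \eqref{e3}. Equivalently, one must verify that a first-order deformation which moves the point $x_i$ or fails to preserve the flag does not arise, and conversely that every cocycle in $\text{At}(E_*)$ integrates to a genuine first-order deformation of the triple. This is where the reformulation of $\text{End}^P(E_*)$ via the sheaves $\mathcal{E}_{i,j}$ in \eqref{e3}--\eqref{e4}, rather than via the fibre flags $E_{i,j}$, pays off, since $\text{Diff}^1_P(E,\,E)$ was defined in \eqref{e7} in exactly those terms and the intersection computation \eqref{e11} then makes the identification transparent. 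Apart from this, the argument is the routine \v{C}ech deformation-theory computation, and I would present it at that level of detail.
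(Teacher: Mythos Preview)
Your approach is correct but takes a genuinely different route from the paper. The paper does not carry out a direct \v{C}ech computation; instead it invokes the correspondence between parabolic bundles on $(X,\,S)$ and $\Gamma_\varphi$--equivariant bundles on a ramified Galois cover $\varphi\,:\,Y\,\to\,X$ (whose existence uses Assumption~\ref{asm1} on rational weights). Under this correspondence, $E_*$ goes to an equivariant bundle $V$ on $Y$, the sheaf ${\rm At}(E_*)$ is identified as the vector bundle underlying the parabolic bundle corresponding to the equivariant ${\rm At}(V)$, and hence $H^1(X,\,{\rm At}(E_*))\,=\,H^1(Y,\,{\rm At}(V))^{\Gamma_\varphi}$; the lemma then follows from the already--known non-parabolic statement \cite[Proposition~4.3]{Ch2}. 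Your direct argument is more self-contained and, notably, does not rely on the rationality of the parabolic weights; on the other hand, the paper's reduction is quicker given the cited result and is part of a systematic strategy, reused in Lemmas~\ref{lem4}, \ref{lem5} and Theorem~\ref{thm1}, of transporting all statements to the equivariant setting on $Y$. One small wording issue: in your final paragraph you write that a deformation ``which moves the point $x_i$\,\ldots\,does not arise'', but of course deformations of the triple do move the marked points; what you mean (and what your argument actually establishes) is that after local trivialisation on each $U_\alpha$ the transition data on overlaps lies in ${\rm At}(E_*)$, and that the $0$--cochains witnessing equivalence are sections of ${\rm At}(E_*)$ over the $U_\alpha$, which in particular have symbol vanishing at $x_i$.
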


\begin{proof}
The proof is very similar to that of
\cite[p.~1413, Proposition 4.3]{Ch2} (see also \cite{Ch1}).

The lemma actually follows from \cite[p.~1413, Proposition 4.3]{Ch2} once we invoke the 
correspondence between the parabolic bundles and the orbifold bundles. This is explained 
below.

For any $x_i\,\in\, S$, let $N_i$ be the smallest positive integer such that
for all $1\, \leq\, j\, \leq\, l_i$,
$$\alpha_{i,j} \,=\, \frac{m_{i,j}}{N_i}\, ,$$ 
where $m_{i,j}$ are nonnegative integers; see \eqref{e2a} and Assumption \ref{asm1}.
There is a ramified Galois covering
\begin{equation}\label{gc}
\varphi\, :\, Y\, \longrightarrow\, X
\end{equation}
satisfying the following two conditions:
\begin{itemize}
\item $\varphi$ is unramified over the complement $X\setminus S$, and

\item for every $x_i\, \in\, S$ and one (hence every) point $y\, \in\, \varphi^{-1}(x_i)$,
the order of ramification of $\varphi$ at $y$ is $N_i$.
\end{itemize}
Such a covering $\varphi$ exists; see \cite[p. 26, Proposition 1.2.12]{Na} and
Assumption \ref{asm1}.

Let $\Gamma_\varphi\,=\, \text{Gal}(\varphi) \, \subset\, \text{Aut}(Y)$ be the Galois group
for the ramified covering $\varphi$, so
$X\,=\, Y/\Gamma_\varphi$. An equivariant vector bundle over $Y$ is a holomorphic
vector bundle $V\, \longrightarrow\, Y$ equipped with a lift of the action of
$\Gamma_\varphi$. In other words,
\begin{itemize}
\item $\Gamma_\varphi$ acts holomorphically on the total space
of $V$, and

\item the action of any $g\, \in\, \Gamma_\varphi$ on $V$ is a holomorphic automorphism
of the vector bundle $V$ over the automorphism $g$ of $Y$. In particular, the projection
map $V\, \longrightarrow\, Y$ is $\Gamma_\varphi$--equivariant.
\end{itemize}
There is a natural equivalence of categories between the parabolic vector bundles on $X$
whose parabolic weights at each $x_i$ are integral multiples of $\frac{1}{N_i}$ and the
equivariant vector bundles on $Y$ \cite{Bi0}, \cite{Bo1}, \cite{Bo2}.

Let $F_*$ be a parabolic vector bundle on $X$
whose parabolic weights at each $x_i$ are integral multiples of $\frac{1}{N_i}$.
The holomorphic vector bundle underlying $F_*$ will be denoted by $F$.
Let $V$ be the equivariant vector bundle on $Y$ corresponding to $F_*$. The action
$\Gamma_\varphi$ on $V$ produces a homomorphism from $\Gamma_\varphi$ to the
group $\text{Aut}(\varphi_*V)$ of holomorphic automorphisms of the direct image
$\varphi_*V$, over the identity map of $X$. Then we have
$$
F\,=\, (\varphi_*V)^{\Gamma_\varphi}\, \subset\, \varphi_*V\, ,
$$
where $(\varphi_*V)^{\Gamma_\varphi}$ denotes the invariant part for the action
of $\Gamma_\varphi$ on $\varphi_*V$.

Take a holomorphic family of compact Riemann surfaces
equipped with $n$ ordered marked points.
Assume that this family is parametrized by $T$, and that there is a point $t_0\, \in\, T$
such that the fiber over $t_0$ is the given pair $(X,\, S)$. Then the construction
of the ramified Galois covering of $X$, done in \cite[p. 26, Proposition 1.2.12]{Na},
extends to produce a family of ramified Galois coverings of all Riemann surfaces
over an open neighborhood of $t_0$ in $T$.

Let $V$ be the equivariant bundle on $Y$ corresponding to the parabolic vector bundle
$E_*$ in the lemma. Then ${\rm End}^P(E_*)$ is the holomorphic vector bundle underlying the
parabolic bundle that corresponds to the equivariant
vector bundle $\text{End}(V)$ on $Y$. The action of
$\Gamma_\varphi$ on $Y$ produces an action of $\Gamma_\varphi$ on $TY$, and
$TX\otimes {\mathcal O}_X(-S)$ is the
holomorphic line bundle underlying the corresponding parabolic line bundle
on $X$. The actions of $\Gamma_\varphi$ on $V$ and $Y$ together produce
an action of $\Gamma_\varphi$ on $\text{At}(V)$. The Atiyah bundle
${\rm At}(E_*)$ is the holomorphic vector bundle underlying the parabolic bundle
corresponding to the equivariant vector bundle $\text{At}(V)$ on $Y$. This implies that
\begin{equation}\label{f1}
H^1(X,\, {\rm At}(E_*))\,=\, H^1(Y,\, {\rm At}(V))^{\Gamma_\varphi}\, .
\end{equation}

In view of\eqref{f1}, the lemma follows immediately from \cite[p.~1413, Proposition 
4.3]{Ch2}.
\end{proof}

Let
\begin{equation}\label{e8}
0\,=\, H^0(X,\, TX\otimes{\mathcal O}_X(-S))
\,\longrightarrow\, H^1\left(X,\, \text{End}^P(E_*)\right)\, \stackrel{p_1}{\longrightarrow}\,
H^1(X,\, {\rm At}(E_*))
\end{equation}
$$
\stackrel{p_2}{\longrightarrow}\,
H^1(X,\, TX\otimes{\mathcal O}_X(-S)) \, \longrightarrow\,
H^2\left(X,\, \text{End}^P(E_*)\right)\,=\,0
$$
be the long exact sequence of cohomologies associated to the short exact sequence
in \eqref{e10}; we have $H^0(X,\, TX\otimes{\mathcal O}_X(-S))\,=\,0$ by
Assumption \ref{asm1}.
The projection $p_2$ in \eqref{e8} is the forgetful map that sends an
infinitesimal deformation of the triple $(X,\, S,\, E_*)$ to the
infinitesimal deformation of the pair $(X,\, S)$ obtained from it by simply
forgetting $E_*$. The injective homomorphism $p_1$ in \eqref{e8} sends an infinitesimal
deformation of $E_*$ to the infinitesimal deformation of $(X,\, S,\, E_*)$ obtained from
it by keeping the pair $(X,\, S)$ fixed.

\begin{lemma}\label{los}
Assume that the parabolic bundle $E_*$ has a connection $D$ such that
the local monodromy of $D$ around each point of $S$ is semisimple (meaning diagonalizable).
Then the local monodromy of every connection on $E_*$ around each point of $S$ is also semisimple.
\end{lemma}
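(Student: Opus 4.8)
The plan is to reduce the statement about an arbitrary connection on $E_*$ to the hypothesis about the distinguished connection $D$ by comparing their residues. First I would recall that, by Corollary \ref{cor1}, any connection $D'$ on $E_*$ differs from $D$ by a global section of $\text{End}^P(E_*)\otimes K_X\otimes{\mathcal O}_X(S)$; that is, $D'=D+\theta$ for some $\theta\in H^0\bigl(X,\,\text{End}^P(E_*)\otimes K_X\otimes{\mathcal O}_X(S)\bigr)$. Taking residues at a point $x_i\in S$, the Poincar\'e adjunction identification \eqref{pa} gives ${\rm Res}(D',x_i)={\rm Res}(D,x_i)+\theta(x_i)$, where $\theta(x_i)$ is the value of $\theta$ in $\text{End}^P(E_*)_{x_i}$ under \eqref{pa}. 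Since $\theta$ is a section of $\text{End}^P(E_*)\otimes K_X\otimes{\mathcal O}_X(S)$, the endomorphism $\theta(x_i)$ preserves the quasiparabolic flag $\{E_{i,j}\}$ at $x_i$.

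Next I would exploit the crucial point that both residues are \emph{compatible with the same flag}, and that on each graded piece $E_{i,j}/E_{i,j+1}$ the endomorphism induced by either ${\rm Res}(D,x_i)$ or ${\rm Res}(D',x_i)$ is multiplication by the \emph{same} scalar $\alpha_{i,j}$ (this is part (2) of the definition of a connection on $E_*$, which applies verbatim to $D'$ by the definition of ``connection on a parabolic bundle''). Hence ${\rm Res}(D',x_i)$ and ${\rm Res}(D,x_i)$, written in a basis adapted to the flag $\{E_{i,j}\}$, are both block upper-triangular with the \emph{same} diagonal entries $\alpha_{i,j}$ (with multiplicities $\dim E_{i,j}-\dim E_{i,j+1}$). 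In particular their eigenvalues with multiplicity coincide; by the description of the local monodromy in \eqref{erm}, the eigenvalues of the local monodromy of $D'$ around $x_i$ coincide with those of $D$. So semisimplicity of the local monodromy of $D'$ cannot fail because of a ``wrong'' spectrum.

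It remains to rule out the possibility of a nontrivial Jordan block, i.e.\ to show that ${\rm Res}(D',x_i)$ is diagonalizable once ${\rm Res}(D,x_i)$ is. Here I would use that the local monodromy of $D$ around $x_i$ is semisimple: this forces ${\rm Res}(D,x_i)$ to be diagonalizable \emph{and} — by a standard comparison of Jordan forms under $\lambda\mapsto \exp(-2\pi\sqrt{-1}\lambda)$ — forces the parabolic weights $\alpha_{i,j}$ to be such that no two of them differ by a nonzero integer; but since all $\alpha_{i,j}$ lie in $[0,1)$ by \eqref{e2a}, they are automatically pairwise non-congruent modulo $\mathbb Z$, so the exponential map is injective on the eigenvalue set. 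Then for \emph{any} flag-preserving endomorphism with graded action $\alpha_{i,j}\cdot{\rm Id}$ on $E_{i,j}/E_{i,j+1}$, the distinct eigenvalues $\alpha_{i,j}$ generate distinct generalized eigenspaces, and flag-compatibility together with the eigenvalue separation lets one solve the standard recursive system to conjugate the endomorphism into block-diagonal form; equivalently, ${\rm Res}(D',x_i)$ has the same number of Jordan blocks of each size as a diagonalizable operator, hence is diagonalizable. Therefore the local monodromy of $D'$ around $x_i$, being conjugate to $\exp\bigl(-2\pi\sqrt{-1}\,{\rm Res}(D',x_i)\bigr)$, is semisimple.

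The main obstacle is the last step: proving diagonalizability of ${\rm Res}(D',x_i)$ from diagonalizability of ${\rm Res}(D,x_i)$ is not purely formal, because $\theta(x_i)$ can be an arbitrary strictly-flag-compatible perturbation and in general adding a nilpotent flag-preserving endomorphism to a diagonalizable one need \emph{not} stay diagonalizable. What saves us is the rigidity coming from part (2) of the parabolic-connection definition — the diagonal blocks are pinned to the fixed scalars $\alpha_{i,j}$ — combined with the non-resonance $\alpha_{i,j}-\alpha_{i,k}\notin\mathbb Z\setminus\{0\}$ forced by \eqref{e2a}. I would phrase this cleanly as a linear-algebra lemma: a block-upper-triangular matrix whose diagonal blocks are $\alpha_{i,j}\cdot{\rm Id}$ with the $\alpha_{i,j}$ pairwise distinct is automatically diagonalizable, and then apply it to both ${\rm Res}(D,x_i)$ and ${\rm Res}(D',x_i)$; the hypothesis on $D$ is in fact only needed to guarantee this non-resonance holds, which \eqref{e2a} already provides, so the conclusion for $D'$ follows.
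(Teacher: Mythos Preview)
Your argument is correct and takes a genuinely different route from the paper. The paper proves Lemma~\ref{los} via the orbifold correspondence: it passes to a ramified Galois cover $\varphi:Y\to X$ (as in \eqref{gc}) on which the parabolic bundle becomes an equivariant bundle with an honest holomorphic connection, so the local monodromy of any connection on $E_*$ around $x_i$ has finite order dividing $N_i$ and is therefore semisimple. Your approach stays on $X$ and works directly with the residue: you observe that ${\rm Res}(D',x_i)$ is block upper-triangular with diagonal blocks $\alpha_{i,j}\cdot{\rm Id}$, that the $\alpha_{i,j}$ are pairwise distinct by \eqref{e2a}, and hence (by the elementary linear-algebra lemma you state) the residue is diagonalizable; non-resonance then gives semisimple monodromy. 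Your method is more elementary, avoids the Galois-cover machinery and Assumption~\ref{asm1} on rationality of weights, and in fact shows that the hypothesis on $D$ is superfluous --- every parabolic connection on $E_*$ has semisimple local monodromy. The paper's approach, on the other hand, fits the overall strategy used repeatedly there (Lemmas~\ref{lem2}, \ref{lem4}, \ref{lem5}, Theorem~\ref{thm1}) of reducing parabolic statements to equivariant ones on a cover. One expository remark: the difference $\theta=D'-D$ actually lands in $H^0(X,\,\text{End}^n(E_*)\otimes K_X\otimes{\mathcal O}_X(S))$ rather than merely $\text{End}^P$, since both residues act by the same scalar on each graded piece; and your middle paragraph tries to extract non-resonance from the semisimplicity hypothesis on $D$, which is unnecessary (and not quite the right implication) --- as you note at the end, \eqref{e2a} already gives it directly.
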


\begin{proof}
Take a ramified Galois covering
$$
\varphi\, :\, Y\, \longrightarrow\, X,
$$
as in \eqref{gc}, satisfying the following two conditions:
\begin{itemize}
\item $\varphi$ is unramified over the complement $X\setminus S$, and

\item for every $x_i\, \in\, S$ and one (hence every) point $y\, \in\, \varphi^{-1}(x_i)$,
the order of ramification of $\varphi$ at $y$ is $N_i$.
\end{itemize}
Let $V$ be the equivariant vector bundle on $Y$ corresponding to $E_*$. Then $D$ corresponds to
an equivariant holomorphic connection on $V$. The space of all connections
on $E_*$ is an affine space for the vector space 
$H^0(X,\,
\text{End}^n(E_*)\otimes K_X\otimes{\mathcal O}_X(S))$, where $\text{End}^n(E_*)$
is defined in \eqref{e5}. On the other hand the space of all equivariant holomorphic connections
on $V$ is an affine space for the vector space $H^0(Y,\,
\text{End}(V)\otimes K_Y)^\Gamma$, where $\Gamma$ is the Galois group for the covering map $\varphi$. 

Since we have
$$H^0(X,\, \text{End}^n(E_*)\otimes K_X\otimes{\mathcal O}_X(S))\,=\, H^0(Y,\,
\text{End}(V)\otimes K_Y)^\Gamma,$$
we conclude that every connection $D'$ on $E_*$ is given by an equivariant connection on $V$. This implies
that the order of the local monodromy of $D'$ around any point $x_i\, \in\, S$ is a sub-multiple of $N_i$.
This implies that the local monodromy of $D'$ around every point of $S$ is semisimple.
\end{proof}

Let $D$ be a connection on the parabolic bundle $E_*$.
We assume that the local monodromy of $D$ around each point of $S$ is semisimple. As mentioned
above, the space of all connections on $E_*$ is an affine space for the vector space $H^0(X,\,
\text{End}^n(E_*)\otimes K_X\otimes{\mathcal O}_X(S))$, where $\text{End}^n(E_*)$
is defined in \eqref{e5}. This implies that the infinitesimal deformations of the 
connection $D$, keeping $(X,\, S,\, E_*)$ fixed, are parametrized by
$H^0(X,\, \text{End}^n(E_*)\otimes K_X\otimes{\mathcal O}_X(S))$.

Let ${\mathcal A}_\bullet$ be the following two-term complex of sheaves on $X$
\begin{equation}\label{e25}
{\mathcal A}_\bullet\,\,:\,\, {\mathcal A}_0\,=\, \text{End}^P(E_*)\,
\stackrel{\mathcal{D}_0}{\longrightarrow}\, {\mathcal A}_1\,=\,\text{End}^n(E_*)
\otimes K_X\otimes{\mathcal O}_X(S)\, ,
\end{equation}
where $\mathcal{D}_0$ is the differential operator in \eqref{e19}, and
${\mathcal A}_i$ is at the $i$-th position.

The following lemma is a standard fact.

\begin{lemma}\label{lem3}
The infinitesimal deformations of the pair $(E_*,\, D)$, keeping $(X,\, S)$ fixed,
are parametrized by the first hypercohomology
${\mathbb H}^1({\mathcal A}_\bullet)$, where ${\mathcal A}_\bullet$ is the
complex in \eqref{e25}.
\end{lemma}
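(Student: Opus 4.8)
The plan is to establish the hypercohomology description of infinitesimal deformations of the pair $(E_*,\, D)$ by combining the two facts already assembled in the excerpt: that infinitesimal deformations of $E_*$ alone (with $(X,\, S)$ fixed) are classified by $H^1(X,\, \text{End}^P(E_*))$, and that once $E_*$ is also fixed, the deformations of the connection $D$ form the affine space modelled on $H^0(X,\, \text{End}^n(E_*)\otimes K_X\otimes{\mathcal O}_X(S))$. The complex ${\mathcal A}_\bullet$ in \eqref{e25} is precisely designed so that its hypercohomology long exact sequence interpolates between these two pieces; making this precise is the whole content of the lemma.

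Concretely, I would first recall the hypercohomology long exact sequence of the two-term complex ${\mathcal A}_\bullet\,=\,[\text{End}^P(E_*)\,\xrightarrow{\mathcal{D}_0}\,\text{End}^n(E_*)\otimes K_X\otimes{\mathcal O}_X(S)]$, namely
\begin{equation*}
H^0({\mathcal A}_0)\,\xrightarrow{\mathcal{D}_0}\, H^0({\mathcal A}_1)\,\longrightarrow\,{\mathbb H}^1({\mathcal A}_\bullet)\,\longrightarrow\, H^1({\mathcal A}_0)\,\xrightarrow{\mathcal{D}_0}\, H^1({\mathcal A}_1)\,,
\end{equation*}
and identify each map with its deformation-theoretic meaning. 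The term $H^0({\mathcal A}_1)$ is the space of infinitesimal deformations of $D$ with $(X,\, S,\, E_*)$ fixed; the image of $\mathcal{D}_0\colon H^0({\mathcal A}_0)\to H^0({\mathcal A}_1)$ consists of those deformations of $D$ induced by infinitesimal automorphisms of $E_*$ (i.e. by global sections of $\text{End}^P(E_*)$ acting via the commutator $\Phi\mapsto [D,\Phi]$, which is exactly $\mathcal{D}_0$), hence the quotient $H^0({\mathcal A}_1)/\mathrm{Im}\,\mathcal{D}_0$ injects into ${\mathbb H}^1({\mathcal A}_\bullet)$ as the genuinely new deformations of the connection. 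The cokernel of that injection is $\ker(\mathcal{D}_0\colon H^1({\mathcal A}_0)\to H^1({\mathcal A}_1))$, which should be interpreted as those infinitesimal deformations of $E_*$ (classes in $H^1(X,\,\text{End}^P(E_*))$) that are compatible with carrying along the connection $D$ — the obstruction to lifting a deformation of $E_*$ to a deformation of $(E_*,\, D)$ lands in $H^1({\mathcal A}_1)$ and is given by the induced map $\mathcal{D}_0$.

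The way I would actually prove it is by the standard \v{C}ech/Dolbeault cocycle argument: represent a deformation of the pair $(E_*,\, D)$ over $\mathrm{Spec}\,{\mathbb C}[\epsilon]/(\epsilon^2)$ by gluing data, namely a \v{C}ech $1$-cocycle $\{\psi_{\alpha\beta}\}$ with values in $\text{End}^P(E_*)$ (deforming the transition functions of $E$ while preserving the quasiparabolic flags) together with a $0$-cochain $\{\eta_\alpha\}$ with values in $\text{End}^n(E_*)\otimes K_X\otimes{\mathcal O}_X(S)$ (deforming the local connection matrices, the target being $\text{End}^n(E_*)$ rather than $\text{End}^P(E_*)$ precisely because the residues are pinned to the fixed parabolic weights, so only the nilpotent part of the residue may move — this is where Corollary \ref{cor1} and the definition of $\mathcal{D}_0$ in \eqref{e19} get used). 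The compatibility condition that these glue to a connection on the deformed bundle is exactly $\eta_\beta-\eta_\alpha\,=\,\mathcal{D}_0(\psi_{\alpha\beta})$ on overlaps, i.e. $(\{\psi_{\alpha\beta}\},\{\eta_\alpha\})$ is a $1$-cocycle for the total complex of ${\mathcal A}_\bullet$; and two such data give isomorphic deformations iff they differ by a coboundary, i.e. by a $0$-cochain $\{\chi_\alpha\}$ in $\text{End}^P(E_*)$ with $\psi_{\alpha\beta}\mapsto\psi_{\alpha\beta}+\chi_\beta-\chi_\alpha$ and $\eta_\alpha\mapsto\eta_\alpha+\mathcal{D}_0(\chi_\alpha)$. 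This is by definition ${\mathbb H}^1({\mathcal A}_\bullet)$.

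The main obstacle — and the only step that is not purely formal — is verifying the cocycle identity $\eta_\beta-\eta_\alpha\,=\,\mathcal{D}_0(\psi_{\alpha\beta})$ really is the correct and complete first-order gluing condition, including checking that the deformed connection still has the prescribed residues (so that the target of the relevant operator is $\text{End}^n(E_*)\otimes K_X\otimes{\mathcal O}_X(S)$ and not the larger $\text{End}^P(E_*)\otimes K_X\otimes{\mathcal O}_X(S)$). This is exactly the computation behind the construction of $\mathcal{D}_0$ in \eqref{e19}: the failure of $\Phi$ to commute with $D$ lands in $\text{End}^n(E_*)$, which is the infinitesimal statement that conjugating $D$ by $\mathrm{Id}+\epsilon\Phi$ moves only the nilpotent part of each residue. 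Since the lemma is flagged as ``a standard fact'', I would keep this verification brief, citing the analogous non-parabolic statement and pointing to \eqref{e19} for the parabolic refinement, rather than writing out the Leibniz-rule bookkeeping in full.
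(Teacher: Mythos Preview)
Your proposal is correct and is precisely the standard \v{C}ech-cocycle argument one uses to establish such hypercohomology descriptions. The paper, however, gives no proof at all: it simply declares the lemma ``a standard fact'' and moves on. So there is nothing to compare against beyond noting that your argument supplies exactly the details the paper omits, and your identification of the key point --- that the target of the deformation cochain $\{\eta_\alpha\}$ is $\text{End}^n(E_*)\otimes K_X\otimes{\mathcal O}_X(S)$ rather than $\text{End}^P(E_*)\otimes K_X\otimes{\mathcal O}_X(S)$, because the parabolic weights pin the semisimple parts of the residues --- is the one genuinely parabolic ingredient and is handled correctly via the construction of $\mathcal{D}_0$ in \eqref{e19}.
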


The complex ${\mathcal A}_\bullet$ in \eqref{e25} fits in the following short
exact sequence of complexes of sheaves on $X$
$$
\begin{matrix}
&&0&& 0\\
&&\Big\downarrow && \Big\downarrow \\
&&0 & \longrightarrow & \text{End}^n(E_*)\otimes K_X\otimes{\mathcal O}_X(S)\\
&&\Big\downarrow && \,\,\,\,\,\,\Big\downarrow{\rm id} \\
{\mathcal A}_\bullet &: & \text{End}^P(E_*) &
\stackrel{\mathcal{D}_0}{\longrightarrow} & \text{End}^n(E_*)\otimes K_X
\otimes{\mathcal O}_X(S)\\
&&\,\,\,\,\,\,\Big\downarrow {\rm id}&& \Big\downarrow \\
&& \text{End}^P(E_*)& \longrightarrow & 0\\
&&\Big\downarrow && \Big\downarrow \\
&&0&& 0
\end{matrix}
$$
Let
\begin{equation}\label{e26}
\longrightarrow\, H^0(X,\, \text{End}^n(E_*)\otimes K_X
\otimes{\mathcal O}_X(S))\, \stackrel{\alpha_1}{\longrightarrow}\, 
{\mathbb H}^1({\mathcal A}_\bullet)\, \stackrel{\alpha_2}{\longrightarrow}\,
H^1\left(X,\, \text{End}^P(E_*)\right)\,\longrightarrow
\end{equation}
be the long exact sequence of hypercohomologies associated to the above
short exact sequence of complexes. The homomorphism
$\alpha_2$ in \eqref{e26} sends an infinitesimal deformation of the pair $(E_*,\, D)$
to the infinitesimal deformation of $E_*$ obtained from it by simply forgetting the
connection. The homomorphism $\alpha_1$ in \eqref{e26} sends an infinitesimal deformation
of the connection $D$ to the infinitesimal deformation of the pair $(E_*,\, D)$ obtained
from it by keeping the parabolic bundle $E_*$ fixed.

Let ${\mathcal B}_\bullet$ denote the following two-term complex of sheaves on $X$
\begin{equation}\label{e27}
{\mathcal B}_\bullet\,\,:\,\, {\mathcal B}_0\,=\, {\rm At}(E_*)\,
\stackrel{\mathcal{D}}{\longrightarrow}\, {\mathcal B}_1\,=\,\text{End}^n(E_*)
\otimes K_X\otimes{\mathcal O}_X(S)\, ,
\end{equation}
where $\mathcal{D}$ is the homomorphism in \eqref{e21}, and
${\mathcal B}_i$ is at the $i$-th position.

\begin{lemma}\label{lem4}
The infinitesimal deformations of the quadruple $$(X,\, S,\, E_*,\, D)$$
are parametrized by the first hypercohomology
${\mathbb H}^1({\mathcal B}_\bullet)$, where ${\mathcal B}_\bullet$ is the
complex in \eqref{e27}.
\end{lemma}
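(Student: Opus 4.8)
The plan is to follow the same pattern already used to prove Lemmas \ref{lem2}, \ref{lem3} and \ref{lem4} is itself the target, so let me instead describe how I would prove \emph{this} Lemma \ref{lem4}. The key is to assemble the three elementary deformation facts already established in the excerpt into a single deformation-with-connection statement, using a short exact sequence of two-term complexes together with the five lemma.

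First I would recall the three ingredients. By Lemma \ref{lem2}, the infinitesimal deformations of the triple $(X,\, S,\, E_*)$ are parametrized by $H^1(X,\, {\rm At}(E_*))$. By the discussion following Lemma \ref{los}, the infinitesimal deformations of the connection $D$ alone, keeping $(X,\, S,\, E_*)$ fixed, are parametrized by $H^0(X,\, \text{End}^n(E_*)\otimes K_X\otimes{\mathcal O}_X(S))$; here the semisimplicity hypothesis enters, because by Lemma \ref{los} it guarantees that \emph{every} nearby connection on a nearby parabolic bundle is again of the allowed (parabolic) type, so the deformation problem stays inside the locus we are describing. Finally, Lemma \ref{lem3} identifies the deformations of the pair $(E_*,\, D)$ with fixed $(X,\, S)$ as ${\mathbb H}^1({\mathcal A}_\bullet)$. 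The bridge between ${\mathcal A}_\bullet$ and ${\mathcal B}_\bullet$ is the Atiyah exact sequence \eqref{e10}: since $\mathcal{D}$ in \eqref{e21} restricts to $\mathcal{D}_0$ in \eqref{e19} on the subsheaf $\text{End}^P(E_*)\subset{\rm At}(E_*)$, the inclusion of complexes ${\mathcal A}_\bullet\hookrightarrow{\mathcal B}_\bullet$ has quotient complex equal to $TX\otimes{\mathcal O}_X(-S)$ placed in degree $0$ (with zero in degree $1$). This gives a short exact sequence of complexes
$$
0\,\longrightarrow\,{\mathcal A}_\bullet\,\longrightarrow\,{\mathcal B}_\bullet\,\longrightarrow\,(TX\otimes{\mathcal O}_X(-S))[0]\,\longrightarrow\,0\, .
$$

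Next I would write down the associated long exact sequence of hypercohomology,
$$
\cdots\,\longrightarrow\,{\mathbb H}^1({\mathcal A}_\bullet)\,\longrightarrow\,{\mathbb H}^1({\mathcal B}_\bullet)\,\longrightarrow\,H^1(X,\, TX\otimes{\mathcal O}_X(-S))\,\longrightarrow\,{\mathbb H}^2({\mathcal A}_\bullet)\,\longrightarrow\,\cdots
$$
and note that ${\mathbb H}^0$ of the quotient is $H^0(X,\, TX\otimes{\mathcal O}_X(-S))=0$ by Assumption \ref{asm1}. This exact sequence is exactly the deformation-theoretic exact sequence one expects for the fibration ``forget the connection and the bundle'': the map ${\mathbb H}^1({\mathcal A}_\bullet)\to{\mathbb H}^1({\mathcal B}_\bullet)$ is the inclusion of deformations keeping $(X,\, S)$ fixed, and the map ${\mathbb H}^1({\mathcal B}_\bullet)\to H^1(X,\, TX\otimes{\mathcal O}_X(-S))$ is the forgetful map to deformations of $(X,\, S)$. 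Comparing this with the honest deformation functor of quadruples $(X,\, S,\, E_*,\, D)$, one gets a natural transformation ${\rm Def}(X,S,E_*,D)\to{\mathbb H}^1({\mathcal B}_\bullet)$ which fits into a morphism of exact sequences: on the ``keep $(X,S)$ fixed'' part it is the isomorphism of Lemma \ref{lem3}, and on the ``forget everything'' part it is the identity of $H^1(X,\, TX\otimes{\mathcal O}_X(-S))$. Invoking the five lemma then forces the middle map to be an isomorphism, which is the claim.

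The main obstacle, and the place where care is needed, is the identification of the quotient complex and of the connecting/forgetful maps with the \emph{geometric} forgetful maps between deformation functors — i.e.\ checking that the abstract long exact sequence really does compute what it should. Concretely, one must verify that a \v{C}ech $1$-hypercocycle for ${\mathcal B}_\bullet$, consisting of a $\{$cochain of sections of ${\rm At}(E_*)$ over double overlaps$\}$ together with a $\{$cochain of sections of $\text{End}^n(E_*)\otimes K_X\otimes{\mathcal O}_X(S)$ over single open sets$\}$, glues together to an actual first-order deformation of the quadruple, and conversely. The cleanest way is to transport everything to the ramified Galois cover $\varphi:Y\to X$ of \eqref{gc} as in the proof of Lemma \ref{lem2}, where ${\rm At}(E_*)$ becomes the $\Gamma_\varphi$-invariant part of the ordinary Atiyah bundle ${\rm At}(V)$, the complex ${\mathcal B}_\bullet$ becomes the $\Gamma_\varphi$-invariant part of the corresponding two-term complex on $Y$ built from the ordinary (non-parabolic, non-logarithmic since $\varphi$ absorbs the poles) connection data, and the statement reduces to the classical fact — e.g.\ as in the references \cite{Ch1}, \cite{Ch2} — that deformations of a curve together with a bundle-with-connection are governed by the first hypercohomology of the associated Atiyah–connection complex, after which one takes $\Gamma_\varphi$-invariants; the relevant cohomology identifications being exact on invariants because $\Gamma_\varphi$ is finite and we are over $\mathbb{C}$.
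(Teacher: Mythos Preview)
Your proposal is correct, and its final paragraph is exactly the paper's proof: pass to the ramified Galois cover $\varphi:Y\to X$ of \eqref{gc}, identify ${\mathcal B}_\bullet$ with the $\Gamma_\varphi$-invariant part of the ordinary Atiyah--connection complex ${\mathcal B}'_\bullet$ on $Y$ (built from the equivariant bundle $V$ and the $\Gamma_\varphi$-invariant connection $D'$), observe that $\mathbb{H}^1({\mathcal B}_\bullet)=\mathbb{H}^1({\mathcal B}'_\bullet)^{\Gamma_\varphi}$, and then invoke \cite[Proposition~4.4]{Ch2}.

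Where you differ from the paper is in the five-lemma preamble. The paper goes straight to the cover and never sets up the comparison of long exact sequences you describe. Your five-lemma framing is not wrong, but as you yourself note, the ``main obstacle'' is constructing the natural transformation ${\rm Def}(X,S,E_*,D)\to\mathbb{H}^1({\mathcal B}_\bullet)$ and checking it is compatible with the forgetful maps; this is essentially the whole content of the lemma, so the five-lemma step does not buy an independent proof. Once you have the Galois-cover identification and Chen's result, the five-lemma scaffolding is redundant: the equality $\mathbb{H}^1({\mathcal B}_\bullet)=\mathbb{H}^1({\mathcal B}'_\bullet)^{\Gamma_\varphi}$ already matches the parabolic deformation space with the $\Gamma_\varphi$-invariant part of the non-parabolic one, and you are done. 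So your argument and the paper's converge to the same core, with your first half serving more as motivation than as logic.
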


\begin{proof}
The proof is very similar to the proof of
\cite[p.~1415, Proposition 4.4]{Ch2} (see also \cite{Ch1}). In fact,
it can also be deduced from \cite[p.~1415, Proposition 4.4]{Ch2}, as done in
the proof of Lemma \ref{lem2}. This is elaborated below.

Take the ramified Galois covering $(Y,\, \varphi)$ in \eqref{gc}. As in the proof of Lemma 
\ref{lem2}, $V$ denotes the equivariant vector bundle on $Y$ corresponding to the 
parabolic vector bundle $E_*$. The connection $D$ on $E_*$ corresponds to a 
$\Gamma_\varphi$--invariant holomorphic connection on $V$, where $\Gamma_\varphi$ is, as 
before, the Galois group for $\varphi$; this $\Gamma_\varphi$--invariant holomorphic 
connection on $V$ will be denoted by $D'$. Let
\begin{equation}\label{f2}
{\mathcal B}'_\bullet\, :\, {\mathcal B}'_0\,:=\,\text{At}(V)\,
\stackrel{\mathcal{D}'}{\longrightarrow}\, {\mathcal B}_1\,:=\,\text{End}(V)\otimes K_Y
\end{equation}
be the complex in \cite[p.~1415, Proposition 4.4]{Ch2} for $D'$; it is the same complex
as in \eqref{e27} when there is no parabolic structure (meaning the parabolic structure
is trivial). We note that the differential
operator $\mathcal{D}'$ in \eqref{f2} is $\Gamma_\varphi$--equivariant, because the connection
$D'$ on $V$ is $\Gamma_\varphi$--invariant. The holomorphic vector bundle $\text{End}^n(E_*)
\otimes K_X\otimes{\mathcal O}_X(S)$ coincides with the holomorphic vector bundle underlying
the parabolic bundle that corresponds to the
$\Gamma_\varphi$--equivariant bundle $\text{End}(V)\otimes K_Y$ on $Y$.
It was noted in the proof of Lemma \ref{lem2} that
${\rm At}(E_*)$ is the holomorphic vector bundle underlying the parabolic bundle
corresponding to the equivariant bundle $\text{At}(V)$ on $Y$. Moreover, the operator
$\mathcal{D}$ in \eqref{e27} coincides with the one given by $\mathcal{D}'$
on the $\Gamma_\varphi$--invariant part of the direct image. These together imply
that
\begin{equation}\label{f3}
{\mathbb H}^1({\mathcal B}_\bullet)\,=\, {\mathbb H}^1
({\mathcal B}'_\bullet)^{\Gamma_\varphi}\, ,
\end{equation}
where ${\mathbb H}^1({\mathcal B}'_\bullet)^{\Gamma_\varphi}\, \subset\,
{\mathbb H}^1({\mathcal B}'_\bullet)$ is the invariant part for the action
of $\Gamma_\varphi$.

In view of \eqref{f3}, the lemma follows from \cite[p.~1415, Proposition 4.4]{Ch2}. 
\end{proof}

The complex ${\mathcal B}_\bullet$ in \eqref{e27} fits in the following
short exact sequence of complexes of sheaves on $X$
$$
\begin{matrix}
&&0&& 0\\
&&\Big\downarrow && \Big\downarrow \\
{\mathcal A}_\bullet & : &\text{End}^P(E_*) & \stackrel{\mathcal{D}_0}{\longrightarrow}
& \text{End}^n(E_*)\otimes K_X\otimes{\mathcal O}_X(S)\\
&&\Big\downarrow && \,\,\,\,\,\,\Big\downarrow{\rm id} \\
{\mathcal B}_\bullet &: & {\rm At}(E_*) &
\stackrel{\mathcal{D}}{\longrightarrow} & \text{End}^n(E_*)\otimes K_X
\otimes{\mathcal O}_X(S)\\
&&\Big\downarrow && \Big\downarrow \\
&& TX\otimes{\mathcal O}_X(-S) & \longrightarrow & 0\\
&&\Big\downarrow && \Big\downarrow \\
&&0&& 0
\end{matrix}
$$
where the vertical exact sequence in the left is the one in \eqref{e10}; see
\eqref{e25} and \eqref{e27} for ${\mathcal A}_\bullet$ and ${\mathcal B}_\bullet$
respectively. Let
\begin{equation}\label{e28}
\longrightarrow\, {\mathbb H}^1({\mathcal A}_\bullet)\, \stackrel{\beta_1}{\longrightarrow}\, 
{\mathbb H}^1({\mathcal B}_\bullet)\, \stackrel{\beta_2}{\longrightarrow}\,
H^1(X,\, TX\otimes{\mathcal O}_X(-S))\,\longrightarrow
\end{equation}
be the long exact sequence of hypercohomologies associated to the above
short exact sequence of complexes. The homomorphism
$\beta_2$ in \eqref{e28} sends an infinitesimal deformation of the quadruple $(X,\, S,\, E_*,\, D)$
to the infinitesimal deformation of $(X,\, S)$ obtained from it by simply forgetting the
pair $(E_*,\, D)$. The homomorphism $\beta_1$ in \eqref{e28} sends an infinitesimal deformation
of the pair $(E_*,\, D)$ to the infinitesimal deformation of the
quadruple $(X,\, S,\, E_*,\, D)$ obtained
from it by keeping the pair $(X,\, S)$ fixed.

\subsection{Character variety and the monodromy map}

Fix an integer $r\, \geq\, 1$. Fix a base point $b_0\, \in\, X\setminus S$, and denote by 
$\widetilde{\rm Hom}(\pi_1(X\setminus S,\, b_0),\, {\rm GL}(r, {\mathbb C}))$ the space of 
all homomorphisms from $\pi_1(X\setminus S,\, b_0)$ to ${\rm GL}(r, {\mathbb C})$. Given 
any $\rho\,\in\, \widetilde{\rm Hom}(\pi_1(X\setminus S,\, b_0),\, {\rm GL}(r, {\mathbb 
C}))$, we consider ${\mathbb C}^r$ as a $\pi_1(X\setminus S,\, b_0)$--module by combining 
$\rho$ with the standard $r$--dimensional representation of ${\rm GL}(r, {\mathbb C})$. We 
recall that $\rho$ is called \textit{completely reducible} if the $\pi_1(X\setminus S,\, 
b_0)$--module ${\mathbb C}^r$ corresponding to $\rho$ is a direct sum of irreducible 
$\pi_1(X\setminus S,\, b_0)$--modules. Let
$$
{\rm Hom}(\pi_1(X\setminus S,\, b_0),\, {\rm 
GL}(r, {\mathbb C})) \, \subset\, \widetilde{\rm Hom}(\pi_1(X\setminus S,\, b_0),\, {\rm 
GL}(r, {\mathbb C}))
$$
be the space of all completely reducible representations. The 
adjoint action of ${\rm GL}(r, {\mathbb C})$ on itself produces an action of ${\rm GL}(r, 
{\mathbb C})$ on ${\rm Hom}(\pi_1(X\setminus S,\, b_0),\, {\rm GL}(r, {\mathbb C}))$. The 
quotient space
\begin{equation}\label{e29} {\mathcal R}_X(r)\,=\, {\rm 
Hom}(\pi_1(X\setminus S,\, b_0),\, {\rm GL}(r, {\mathbb C}))/ {\rm GL}(r, {\mathbb C}) 
\end{equation}
is a normal quasiprojective variety defined over $\mathbb C$. (See 
\cite{Si}, \cite{LS}, and references therein, for ${\mathcal R}_X(r)$.)

For another base point $b'_0\,\in\, X\setminus S$, the two groups $\pi_1(X\setminus S,\, b_0)$
and $\pi_1(X\setminus S,\, b'_0)$ are identified up to inner automorphisms, and hence
${\mathcal R}_X(r)$ does not depend on the choice of the base point $b_0$. The
complex structure of $X$ does not play any role in the construction of ${\mathcal R}_X(r)$;
the space ${\mathcal R}_X(r)$ depends only on the topological surface underlying
$X\setminus S$.

A connection $D$ on a parabolic vector bundle $E_*$ is called \textit{completely 
reducible} if it is a direct sum of irreducible logarithmic connections.

Fix the dimensions of the subspaces $E_{i,j}$ and fix the parabolic weights 
$\alpha_{i,j}$; see \eqref{e2}, \eqref{e2a}. Let ${\mathcal M}_X(r)$ denote the moduli 
space of pairs $(E_*,\, D)$, where $E_*$ is a parabolic vector bundle of rank $r$ on $X$ 
having the given parabolic type and $D$ is a completely reducible connection on $E_*$; see 
\cite{In}, \cite{IIS1}, \cite{IIS2}, \cite{Iw}, \cite{BBP} and references therein for the 
moduli space.

Since any logarithmic connection on a holomorphic vector bundle on $X$ is flat, we can associate
a monodromy representation to any logarithmic connection. Consequently, we have a
holomorphic map
\begin{equation}\label{e30}
\mathbb{M}_X\, :\, {\mathcal M}_X(r)\, \longrightarrow\, {\mathcal R}_X(r)\, ,
\end{equation}
where ${\mathcal R}_X(r)$ is constructed in \eqref{e29}, that sends any connection to
its monodromy.

Let
\begin{equation}\label{e31}
\varpi\, :\, X_T\, \longrightarrow\, T
\end{equation}
be a holomorphic family of compact connected Riemann surfaces parametrized by a
simply connected complex manifold $T$. For any $t\, \in\, T$, the
fiber $\varpi^{-1}(t)$ will be denoted by $X_t$. For $1\, \leq\, i\, \leq\, n$, let
\begin{equation}\label{phii}
\phi_i\,\,:\,\, T\, \longrightarrow\, X_T
\end{equation}
be a holomorphic section such that $\phi_i(T)\bigcap \phi_j(T)\, =\,\emptyset$ for
all $i\, \not=\, j$. Fix a base point $t_0\, \in\, T$. Denote $X_{t_0}$ by $X$,
and also denote $s_i(t_0)$ by $x_i$ for every $1\, \leq\, i\, \leq\, n$. As before,
denote the subset $\{x_1,\, \cdots,\, x_n\}\, \subset X$ by $S$. For any $t\, \in\, T$,
the subset $\{\phi_1(t),\, \cdots,\, \phi_n(t)\}\, \subset X_t\,:=\, \varpi^{-1}(t)$
will be denoted by $S_t$.

Since the parameter space $T$ is simply connected, $\pi_1(X\setminus S,\, b_0)$ and
$\pi_1(X_t\setminus S_t,\, b_t)$ are identified up to inner automorphisms. This implies
that the character variety ${\mathcal R}_{X_t}(r)$ constructed as in \eqref{e29} is
canonically identified with ${\mathcal R}_{X}(r)$.

Let
\begin{equation}\label{e33}
{\mathcal M}^T(r)\, \longrightarrow\, T
\end{equation}
be the relative moduli space of
parabolic bundles with connection for the family $X_T$ in \eqref{e31}. In view of the
above observation that ${\mathcal R}_{X_t}(r)$ is
identified with ${\mathcal R}_{X}(r)$, the monodromy maps $\mathbb{M}_X$ in \eqref{e30}
for points of $T$ actually fit together to produce a holomorphic map
\begin{equation}\label{e32}
\mathbb{M}\, :\, {\mathcal M}^T(r)\, \longrightarrow\, {\mathcal R}_X(r)\, .
\end{equation}

Let $E^T_*$ be a holomorphic family of parabolic bundles on $X_T$, and let
$D^T$ be a relative connection $E^T_*$. In other words, the pair $(E^T_*,\, D^T)$
corresponds to a holomorphic section
\begin{equation}\label{Psi}
\Psi\,:\, T\, \longrightarrow\, {\mathcal M}^T(r)
\end{equation}
of the holomorphic family of moduli spaces in \eqref{e33}.

A holomorphic family of the above type $(E^T_*,\, D^T)$, of parabolic bundles with 
connection, is called isomonodromic if the composition $\mathbb{M}\circ\Psi$ is a constant 
map, where $\mathbb{M}$ is the monodromy map in \eqref{e32} and $\Psi$ is the map in 
\eqref{Psi}. This condition of being isomonodromy defines a holomorphic foliation on 
${\mathcal M}^T(r)$ which is transversal to the holomorphic foliation given by the 
projection ${\mathcal M}^T(r)\, \longrightarrow\, T$ in \eqref{e33}. In other words, the 
direct sum of these two distributions coincides with the full tangent bundle $T{\mathcal 
M}^T(r)\, \longrightarrow\,{\mathcal M}^T(r)$, on the smooth locus of ${\mathcal M}^T(r)$.

Let $D$ be a connection on a parabolic vector bundle $E_*$ over $X$ with parabolic 
structure over $S$.
We assume that the local monodromy of $D$ around each point of $S$ is semisimple.
The foliation given by isomonodromy produces a homomorphism from the 
space of infinitesimal deformations of the pair $(X,\, S)$ to the space of infinitesimal 
deformations of the quadruple $(X,\, S,\, E_*,\, D)$. In view of Lemma \ref{lem4}, this 
homomorphism, from the space of infinitesimal deformations of $(X,\, S)$ to the space of 
infinitesimal deformations of $(X,\, S,\, E_*,\, D)$, is given by a homomorphism
\begin{equation}\label{e34}
\gamma\ :\, H^1(X,\, TX\otimes {\mathcal O}_X(-S))\, \longrightarrow\,
{\mathbb H}^1({\mathcal B}_\bullet)\, ,
\end{equation}
where ${\mathcal B}_\bullet$ is the complex in \eqref{e27}.

{}From Corollary \ref{cor1} we know that the connection $D$ produces a homomorphism
$${\mathbf h}\, :\, TX\otimes{\mathcal O}_X(-S)\, \longrightarrow\, {\rm At}(E_*)$$
such that $\sigma\circ \mathbf{h}\,=\, {\rm Id}_{TX\otimes{\mathcal O}_X(-S)}$, where $\sigma$ is the
projection in \eqref{e10}. This homomorphism ${\mathbf h}$ produces a homomorphism of complexes of sheaves on $X$
\begin{equation}\label{e35}
\begin{matrix}
&& TX\otimes{\mathcal O}_X(-S)& \longrightarrow & 0\\
&& \,\,\, \Big\downarrow{\mathbf h} && \Big\downarrow\\
{\mathcal B}_\bullet &: & {\rm At}(E_*) & \stackrel{\mathcal{D}}{\longrightarrow}
& \text{End}^n(E_*)\otimes K_X \otimes{\mathcal O}_X(S) 
\end{matrix}
\end{equation}
We note that $\mathcal{D}\circ{\mathbf h}\,=\, 0$ because the connection $D$ is flat. The homomorphism of
complexes in \eqref{e35} produces a homomorphism of hypercohomologies \begin{equation}\label{e36} \delta\ :\, H^1(X,\, 
TX\otimes {\mathcal O}_X(-S))\, \longrightarrow\, {\mathbb H}^1({\mathcal B}_\bullet)\, . \end{equation}

\begin{lemma}\label{lem5}
The homomorphism $\gamma$ constructed in \eqref{e34} coincides with the homomorphism
$\delta$ constructed in \eqref{e36}.
\end{lemma}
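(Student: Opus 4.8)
The plan is to compare the two homomorphisms $\gamma$ and $\delta$ by unwinding their constructions on the level of Čech (hyper)cocycles and checking that they agree. First I would recall that $\delta$ is, by its very definition in \eqref{e36}, induced by the map of complexes in \eqref{e35}, which is determined by the splitting ${\mathbf h}$ of the Atiyah sequence associated to the connection $D$ via Corollary \ref{cor1}. So the content of the lemma is that the isomonodromy foliation — whose differential at the point $(X,S,E_*,D)$ is $\gamma$ — is computed by exactly this splitting ${\mathbf h}$.

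The key steps, in order: (1) Take a class $\xi \in H^1(X,\, TX\otimes{\mathcal O}_X(-S))$, represented by a Čech $1$-cocycle $\{v_{ab}\}$ of vector fields (vanishing along $S$) for an open cover $\{U_a\}$ of $X$; this $\xi$ is the infinitesimal deformation of the pair $(X,S)$ we wish to propagate isomonodromically. (2) Realize the isomonodromic lift concretely: the deformation of $(X,S)$ corresponding to $\xi$ glues the $U_a$'s via the time-one flow of $v_{ab}$, and isomonodromy requires that the flat structure (i.e. the local system) be kept fixed while the complex structure moves. Using $D$ to \emph{horizontally} transport the gluing data of $E_*$ along the deforming base, one produces a $1$-hypercocycle for ${\mathcal B}_\bullet$: in degree $0$ it is the collection $\{{\mathbf h}(v_{ab})\} \in C^1(\{U_a\},\, {\rm At}(E_*))$ and in degree $1$ it records the failure of this to be a genuine $0$-cocycle, which by flatness of $D$ — equivalently $\mathcal{D}\circ{\mathbf h} = 0$, as already observed after \eqref{e35} — is zero. (3) Observe that this is precisely the image of $\{v_{ab}\}$ under the map on hypercohomology induced by \eqref{e35}, hence equals $\delta(\xi)$; therefore $\gamma(\xi) = \delta(\xi)$.

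To carry this out rigorously I would lean on the passage to the ramified Galois cover $\varphi\colon Y\to X$ from \eqref{gc} (as in the proofs of Lemma \ref{lem2} and Lemma \ref{lem4}), where $E_*$ becomes an equivariant bundle $V$ with an equivariant \emph{holomorphic} connection $D'$, and ${\mathbb H}^1({\mathcal B}_\bullet) = {\mathbb H}^1({\mathcal B}'_\bullet)^{\Gamma_\varphi}$ by \eqref{f3}. On $Y$ this is the classical, non-parabolic statement: the isomonodromic differential for a holomorphic connection is given by the Atiyah splitting determined by that connection — this is exactly the computation underlying \cite[Proposition 4.4]{Ch2} / the work of \cite{Sa}. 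Taking $\Gamma_\varphi$-invariants descends the identity $\gamma = \delta$ to $X$. The main obstacle I anticipate is purely bookkeeping: matching the two descriptions of the isomonodromic lift — the moduli-theoretic one defining $\gamma$ (via the section $\Psi$ and the condition $\mathbb{M}\circ\Psi = \mathrm{const}$) with the explicit horizontal-transport cocycle — and checking that no extra correction term appears in the degree-$1$ part of the hypercocycle; the key simplification that makes this vanish is again the flatness identity $\mathcal{D}\circ{\mathbf h}=0$, so the work reduces to verifying that the degree-$0$ part is literally $\{{\mathbf h}(v_{ab})\}$, which is forced by the defining property of $\mathbf h$ as the connection-induced splitting.
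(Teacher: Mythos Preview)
Your proposal is correct and follows essentially the same approach as the paper: reduce to the non-parabolic case via the ramified Galois cover $\varphi\colon Y\to X$, invoke the known result there, and descend by taking $\Gamma_\varphi$-invariants. One small correction: the non-parabolic statement you need on $Y$ is \cite[p.~1417, Proposition 5.1]{Ch2} (the identification of the isomonodromy map with the Atiyah-splitting map), not Proposition 4.4 (which is the deformation-space computation you already used for Lemma \ref{lem4}); your Čech-cocycle sketch is in fact an outline of what that Proposition 5.1 proves.
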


\begin{proof}
The proof of Lemma \ref{lem5} is similar to the proof of \cite[p.~1417, Proposition 
5.1]{Ch2} (see also \cite{Ch1}). In fact, as done in the proofs of Lemma \ref{lem2} and
Lemma \ref{lem4}, the lemma can be deduced from \cite[p.~1417, Proposition 5.1]{Ch2}
using the correspondence between the parabolic bundles and the orbifold bundles. We
omit the details.
\end{proof}

\section{Infinitesimal deformations of parabolic opers}\label{se4}

We recall from \cite{BDP} the definition of parabolic ${\rm SL}(r, {\mathbb C})$-opers.

Consider the subset $S\, \subset\, X$ in \eqref{e1}; assume that the integer
$n\,=\, \# S$ is even. Fix a holomorphic line bundle ${\mathbb L}$ on $X$
of degree $-\frac{n}{2}$ such that
${\mathbb L}^{^{\otimes 2}}$ is isomorphic to ${\mathcal O}_X(-S)$; also fix a
holomorphic isomorphism
$$
\varphi_0\, :\, {\mathbb L}^{^{\otimes 2}}\, \longrightarrow\, {\mathcal O}_X(-S)
$$
of line bundles. Fix a holomorphic line bundle $K^{1/2}_X$ on $X$ of degree
$\text{genus}(X)-1$ such that $\left(K^{1/2}_X\right)^{\otimes 2}$ is isomorphic to
$K_X$, in other
words, $K^{1/2}_X$ is a theta characteristic on $X$; fix a holomorphic isomorphism
$$
I_X\, :\, \left(K^{1/2}_X\right)^{\otimes 2}\, \longrightarrow\, K_X
$$
of line bundles. Since
$$
H^1(X,\, \text{Hom}(K^{-1/2}_X\otimes {\mathbb L},\,
K^{1/2}_X\otimes {\mathbb L}))\,=\, H^1(X,\, K_X)\,=\, H^0(X,\, {\mathcal O}_X)^*
\,=\, \mathbb C
$$
(the isomorphism $H^1(X,\, K_X)\,=\, H^0(X,\, {\mathcal O}_X)^*$ is given by
Serre duality), we have a nontrivial extension
\begin{equation}\label{e38}
0\, \longrightarrow\, K^{1/2}_X\otimes {\mathbb L}\, \longrightarrow\, \mathcal{E}
\, \stackrel{q}{\longrightarrow}\, K^{-1/2}_X\otimes {\mathbb L} \, \longrightarrow\,0
\end{equation}
corresponding to $1\, \in\, H^1(X,\, \text{Hom}(K^{-1/2}_X\otimes {\mathbb L},\,
K^{1/2}_X\otimes {\mathbb L}))\,=\, \mathbb C$.

We will put a parabolic structure on the rank two vector bundle $\mathcal{E}$ in \eqref{e38}.
Fix a function
\begin{equation}\label{e37}
\mathbf{c}\, :\, S\, \longrightarrow\, \{t\, \in\, {\mathbb Z}\, \mid\, t\, \geq\, 2\}\, ;
\end{equation}
the integer ${\mathbf c}(x_i)$ will also be denoted by $c_i$.

Equip the vector bundle $\mathcal{E}$ in \eqref{e38} with the following parabolic structure
over $S$: For any $x_i\,\in\, S$, the quasiparabolic filtration of $\mathcal{E}_{x_i}$ is
$$
(K^{1/2}_X\otimes {\mathbb L})_{x_i}\, \subset\, \mathcal{E}_{x_i}\, ,
$$
where $K^{1/2}_X\otimes {\mathbb L}$ is the line subbundle in \eqref{e38}. The parabolic 
weight of $(K^{1/2}_X\otimes {\mathbb L})_{x_i}$ is $\frac{2c_i-1}{2c_i}$ and the 
parabolic weight of $\mathcal{E}_{x_i}$ is $\frac{1}{2c_i}$ (see \eqref{e37}). The 
resulting parabolic vector bundle of rank two on $X$ will be denoted by $\mathcal{E}_*$. 
The parabolic degree of $\mathcal{E}_*$ is zero. We note that the parabolic exterior 
product $\bigwedge^2 \mathcal{E}_*$ of $\mathcal{E}_*$ is the trivial holomorphic line 
bundle on $X$ equipped with the trivial holomorphic structure; see \cite[Section 5]{BDP}, 
\cite{Bi} for the parabolic exterior product and the parabolic symmetric product.

\begin{remark}\label{ros}
It can be shown that the above rank two parabolic bundle $\mathcal{E}_*$ is indecomposable.
To prove this assume that $\mathcal{E}_*\,=\, L_*\oplus L'_*$, where $L_*$ and $L'_*$ are parabolic
line bundles. Suppose that ${\rm par-deg}(L_*)\, \geq\, {\rm par-deg}(L'_*)$. Consider the composition
of homomorphism
$$
L_*\, \hookrightarrow\, \mathcal{E}_* \, \stackrel{q}{\longrightarrow}\, (K^{-1/2}_X\otimes {\mathbb L})_*\, ,
$$
where $q$ is the projection in \eqref{e38} and $(K^{-1/2}_X\otimes {\mathbb L})_*$ is the quotient line bundle
$K^{-1/2}_X\otimes {\mathbb L}$ in \eqref{e38} equipped with the parabolic structure induced by
$\mathcal{E}_*$. This composition of homomorphisms vanishes identically, because
${\rm par-deg}(L_*)\, > \,{\rm par-deg}((K^{-1/2}_X\otimes {\mathbb L})_*)$. This implies that 
the short exact sequence in \eqref{e38} splits. But the short exact sequence in \eqref{e38} does not split.
In view of this contradiction we conclude that the parabolic bundle $\mathcal{E}_*$ is indecomposable.
\end{remark}

We note that a connection on a parabolic vector bundle $V_*$ induces connections on
$\bigwedge^i V_*$ and $\text{Sym}^{i}(V_*)$ for every $i$.

Take any integer $r\, \geq\, 2$. Let $\text{Sym}^{r-1}(\mathcal{E}_*)$ be the parabolic 
symmetric product of $\mathcal{E}_*$. We have 
$\text{rank}(\text{Sym}^{r-1}(\mathcal{E}_*))\,=\, r$, and the parabolic exterior 
product $\bigwedge^r\text{Sym}^{r-1}(\mathcal{E}_*)$ is the trivial holomorphic line 
bundle on $X$ equipped with the trivial parabolic structure.

A \textit{parabolic} ${\rm SL}(r,{\mathbb C})$--\textit{oper} on $X$ is a connection $D$ on the parabolic
vector bundle $\text{Sym}^{r-1}(\mathcal{E}_*)$ such that the connection on $\bigwedge^r
\text{Sym}^{r-1}(\mathcal{E}_*)$ induced by $D$ coincides with the connection on ${\mathcal O}_X$
given by the de Rham differential $d$; see \cite[p.~511, Definition 5.2]{BDP}.

\begin{lemma}\label{os}
For any parabolic ${\rm SL}(r,{\mathbb C})$--oper connection $D$, the local monodromy of $D$
around any point of $S$ is semisimple.
\end{lemma}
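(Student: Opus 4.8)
The plan is to compute the residue of a parabolic ${\rm SL}(r,{\mathbb C})$--oper connection $D$ at each $x_i\in S$ and show that it is diagonalizable, since by \eqref{erm} the local monodromy is then semisimple. The starting point is the rank two parabolic bundle $\mathcal{E}_*$: by Corollary \ref{cor1}, any connection on $\mathcal{E}_*$ has residue at $x_i$ preserving the quasiparabolic line $(K^{1/2}_X\otimes{\mathbb L})_{x_i}\subset\mathcal{E}_{x_i}$, with induced scalars equal to the two parabolic weights $\frac{2c_i-1}{2c_i}$ and $\frac{1}{2c_i}$ on the sub and the quotient respectively. Since these two weights are \emph{distinct}, the residue of any connection on $\mathcal{E}_*$ at $x_i$ has two distinct eigenvalues and is therefore automatically diagonalizable. (Strictly speaking $D$ is a connection on $\mathrm{Sym}^{r-1}(\mathcal{E}_*)$, not on $\mathcal{E}_*$ itself, but the oper condition forces $D$ to be compatible with the construction of $\mathrm{Sym}^{r-1}$ from an underlying rank two object; I would make this precise in the next step.)

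Next I would pass to $\mathrm{Sym}^{r-1}(\mathcal{E}_*)$. The key point is functoriality of residues under the parabolic symmetric power: if $D$ is a parabolic ${\rm SL}(r,{\mathbb C})$--oper, then by definition it is a connection on $\mathrm{Sym}^{r-1}(\mathcal{E}_*)$ inducing the trivial connection on the determinant, and I would argue—either from the definition in \cite{BDP} directly, or by the uniqueness of the oper up to the relevant gauge data—that its residue at $x_i$ is the $(r-1)$-st symmetric power of a residue-type endomorphism of the rank two fiber $\mathcal{E}_{x_i}$ whose eigenvalues are the two parabolic weights $a_i:=\frac{2c_i-1}{2c_i}$ and $b_i:=\frac{1}{2c_i}$. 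Granting this, $\mathrm{Res}(D,x_i)$ is conjugate to $\mathrm{Sym}^{r-1}$ of $\mathrm{diag}(a_i,b_i)$, whose eigenvalues are $\{(r-1-k)a_i+k b_i : 0\le k\le r-1\}$, a set of $r$ \emph{distinct} real numbers because $a_i\neq b_i$. Hence $\mathrm{Res}(D,x_i)$ is diagonalizable with distinct eigenvalues, and by the relation \eqref{erm} between residue eigenvalues and local monodromy eigenvalues, the local monodromy of $D$ around $x_i$ is semisimple.

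The main obstacle I anticipate is the bookkeeping in the second step: one must know precisely what the residue of a parabolic oper connection looks like on $\mathrm{Sym}^{r-1}(\mathcal{E}_*)$, which requires unwinding the definition of the parabolic symmetric product (the parabolic weights of $\mathrm{Sym}^{r-1}(\mathcal{E}_*)$ at $x_i$ are the fractional parts of $(r-1-k)a_i+k b_i$, and the quasiparabolic flag is the obvious one coming from the line $K^{1/2}_X\otimes{\mathbb L}$), together with the compatibility of the induced connection on $\mathrm{Sym}^{r-1}$ with residues—namely $\mathrm{Res}(\mathrm{Sym}^{r-1}D_0,x_i)$ acts on $\mathrm{Sym}^{r-1}$ of the fiber as the derivation induced by $\mathrm{Res}(D_0,x_i)$. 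Once this standard functoriality is in hand (it follows from the Leibniz rule computation for symmetric powers of connections, applied locally near $x_i$), the eigenvalue computation is immediate. I would also remark that the same argument shows the residue eigenvalues are, up to the shift by integers implicit in the parabolic normalization, all distinct, which is exactly what gives semisimplicity of the local monodromy and not merely of the residue. An alternative, if one prefers to avoid discussing $\mathrm{Sym}^{r-1}$ of the residue directly, is to invoke Lemma \ref{los}: it suffices to exhibit \emph{one} parabolic ${\rm SL}(r,{\mathbb C})$--oper on $\mathrm{Sym}^{r-1}(\mathcal{E}_*)$ with semisimple local monodromy, and the canonical oper built from the tautological filtration of $\mathrm{Sym}^{r-1}(\mathcal{E}_*)$ is easily seen to have this property by the computation above; then Lemma \ref{los} upgrades this to \emph{all} connections on $\mathrm{Sym}^{r-1}(\mathcal{E}_*)$, in particular all parabolic opers.
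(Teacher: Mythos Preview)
Your ``alternative'' at the end is precisely the paper's proof: choose any connection $D_0$ on $\mathcal{E}_*$ (one exists because $\mathcal{E}_*$ is indecomposable of parabolic degree zero), observe that its two residue eigenvalues $\tfrac{1}{2c_i}$ and $\tfrac{2c_i-1}{2c_i}$ do not differ by an integer so its local monodromy is semisimple, pass to the induced connection $\mathrm{Sym}^{r-1}(D_0)$ on $\mathrm{Sym}^{r-1}(\mathcal{E}_*)$, and then invoke Lemma~\ref{los} to upgrade from one connection to all.

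Your primary approach, by contrast, has a real gap. A parabolic ${\rm SL}(r,\mathbb{C})$--oper $D$ is by definition an arbitrary connection on $\mathrm{Sym}^{r-1}(\mathcal{E}_*)$ with trivial induced determinant; it is \emph{not} required to be of the form $\mathrm{Sym}^{r-1}(D_0)$ for some connection $D_0$ on $\mathcal{E}_*$, and neither the definition in \cite{BDP} nor any ``uniqueness of the oper'' forces this (opers form an affine space, not a point). Consequently there is no reason $\mathrm{Res}(D,x_i)$ should be the $(r-1)$-st symmetric power of a $2\times 2$ matrix. What Corollary~\ref{cor1} actually gives you is that the residue eigenvalues are the parabolic weights of $\mathrm{Sym}^{r-1}(\mathcal{E}_*)$, namely the \emph{fractional parts} of $(r-1-k)a_i+kb_i$, and these can repeat: for $r=3$, $c_i=2$ one gets $\tfrac12,\,0,\,\tfrac12$. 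So your ``$r$ distinct eigenvalues'' step fails as written. The direct route can be repaired---the residue is block upper-triangular along the quasiparabolic flag with \emph{distinct scalar} diagonal blocks $\alpha_{i,j}\cdot\mathrm{Id}$, hence its minimal polynomial divides $\prod_j(x-\alpha_{i,j})$ which has simple roots, so the residue is diagonalizable; together with the non-resonance $\alpha_{i,j}\in[0,1)$ this yields semisimple monodromy---but that is not the argument you proposed.
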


\begin{proof}
First note that the parabolic $\mathcal{E}_*$
admits a connection, because $\mathcal{E}_*$ is indecomposable (this was shown in Remark \ref{ros}) and its parabolic degree is zero
\cite[p. 594, Theorem 1.1]{BL}. Since the two parabolic weights at every $x_i\, \in\, S$ do not differ by an
integer, we conclude that for any connection on the parabolic $\mathcal{E}_*$ the local monodromy around any
point of $S$ is semisimple (see \eqref{erm}). Therefore, the connection on $\text{Sym}^{r-1}(\mathcal{E}_*)$
induced by a connection on $\mathcal{E}_*$ also has the property that the local monodromy around
every point of $S$ is semisimple. From this it follows that every connection on
$\text{Sym}^{r-1}(\mathcal{E}_*)$ has the property that the local monodromy around every point of $S$ is semisimple;
see Lemma \ref{los}.
\end{proof}

The subbundle $K^{1/2}_X\otimes {\mathbb L}\, \subset\, \mathcal{E}$ in \eqref{e38} equipped
with the induced parabolic structure will be denoted by $F_*$. So the holomorphic
line bundle underlying $F_*$ is $K^{1/2}_X\otimes {\mathbb L}$, and the parabolic
weight of $F_*$ and any $x_i\, \in\, S$ is $\frac{2c_i-1}{2c_i}$. The parabolic subbundle
\begin{equation}\label{ef}
F_*\, \stackrel{\iota_F}{\hookrightarrow}\, \mathcal{E}_*
\end{equation}
produces a filtration of parabolic subbundles of $\text{Sym}^{r-1}(\mathcal{E}_*)$ in the
following way. For any $0\,\leq\, j\, \leq\, r-1$, consider the parabolic vector
bundle $(F_*)^{\otimes (r-1-j)}\otimes \text{Sym}^{j}(\mathcal{E}_*)$; by definition,
$(F_*)^{\otimes 0}$ and $\text{Sym}^{0}(\mathcal{E}_*)$
coincide with the trivial holomorphic line bundle with the
trivial parabolic structure (see \cite{Bi}, \cite{BDP} for the tensor product of parabolic
vector bundles). So the rank of $(F_*)^{\otimes (r-1-j)}\otimes \text{Sym}^{j}(\mathcal{E}_*)$
is $j+1$. This parabolic vector bundle $(F_*)^{\otimes (r-1-j)}\otimes
\text{Sym}^{j}(\mathcal{E}_*)$
will be denoted by ${\mathcal F}^{(j+1)}_*$ (since $j+1$ is its rank). We note that
\begin{equation}\label{e39}
{\mathcal F}^{(1)}_*\, \subset\, {\mathcal F}^{(2)}_*\, \subset\, \cdots
\, \subset\, {\mathcal F}^{(r-1)}_*\, \subset\, {\mathcal F}^{(r)}_*\,=\, 
\text{Sym}^{r-1}(\mathcal{E}_*)
\end{equation}
is a filtration of parabolic subbundles of $\text{Sym}^{r-1}(\mathcal{E}_*)$. For any
$1\, \leq\, i\, \leq\, r-1$, the inclusion map
$$
{\mathcal F}^{(i)}_*\, \hookrightarrow\, {\mathcal F}^{(i+1)}_*
$$
in \eqref{e39} is constructed, in a straightforward way, using the inclusion map $F_*\,
\hookrightarrow\,\mathcal{E}_*$ in \eqref{ef} together with the natural projection
$\mathcal{E}_*\otimes\text{Sym}^{j}(\mathcal{E}_*)\, \longrightarrow\,
\text{Sym}^{j+1}(\mathcal{E}_*)$. More, precisely, we have
$$
(\iota_F)^{\otimes (r-i)}\,\, :\,\,(F_*)^{\otimes (r-i)}
\, \hookrightarrow\,(F_*)^{\otimes (r-i-1)}\otimes\mathcal{E}_* ,\,
$$
where $\iota_F$ is the inclusion map in \eqref{ef}. This implies that
$$
{\mathcal F}^{(i)}_*\,=\, (F_*)^{\otimes (r-i)}\otimes \text{Sym}^{i-1}(\mathcal{E}_*)
\, \hookrightarrow\,(F_*)^{\otimes (r-i-1)}\otimes\mathcal{E}_*\otimes
\text{Sym}^{i-1}(\mathcal{E}_*)
$$
$$
\longrightarrow\,
(F_*)^{\otimes (r-i-1)}\otimes \text{Sym}^{i}(\mathcal{E}_*)
\,=\, {\mathcal F}^{(i+1)}_*\,.
$$
The above composition of homomorphisms produces the inclusion map ${\mathcal F}^{(i)}_*\,
\subset\, {\mathcal F}^{(i+1)}_*$ in \eqref{e39}.

For any $1\, \leq\, j\, \leq\, r$, the holomorphic vector bundle of rank $j$ underlying the
parabolic vector bundle ${\mathcal F}^{(j)}_*$ will be denoted by ${\mathcal F}^{(j)}$. So the
filtration in \eqref{e39} produce a filtration of holomorphic subbundles 
\begin{equation}\label{e40}
{\mathcal F}^{(1)}\, \subset\, {\mathcal F}^{(2)}\, \subset\, \cdots
\, \subset\, {\mathcal F}^{(r-1)}\, \subset\, {\mathcal F}^{(r)}
\end{equation}
of ${\mathcal F}^{(r)}$. Note that ${\mathcal F}^{(r)}$ is the holomorphic vector
bundle underlying the parabolic vector bundle $\text{Sym}^{r-1}(\mathcal{E}_*)$.

\begin{remark}\label{rem2}
It should be clarified that although the holomorphic vector
bundle underlying the parabolic bundle $\mathcal{E}_*$, namely the holomorphic vector
bundle $\mathcal{E}$, does not depend on the function $\mathbf{c}$ in \eqref{e37}, the vector
bundle ${\mathcal F}^{(r)}$ depends on $\mathbf{c}$ in general. It
should also be mentioned that the fact that the parabolic exterior product
$\bigwedge^r\text{Sym}^{r-1}(\mathcal{E}_*)$ is the trivial holomorphic line bundle
equipped with the trivial parabolic structure does not imply that
$\bigwedge^r{\mathcal F}^{(r)}$ is the trivial holomorphic line bundle. In fact
we have $\text{degree}\left(\bigwedge^r{\mathcal F}^{(r)}\right)\, <\, 0$. Note that
$\text{degree}(\mathcal{E})\,=\, -n$.
\end{remark}

\begin{remark}\label{rem3}
It is a straightforward computation to check that
$$\bigwedge\nolimits^r{\mathcal F}^{(r)}\,=\, {\mathcal O}_X(-\sum_{i=1}^n d_ix_i)\, ,$$
where
$$d_i\,=\, \sum_{k=0}^{r-1}\left(\frac{2k(c_i-1)+r-1}{2c_i}-
\Big\lfloor{\frac{2k(c_i-1)+r-1}{2c_i}}\Big\rfloor\right)\, ;$$
the integral part of $b\, \in\, \mathbb Q$ is denoted $\lfloor{b}\rfloor$, so
$\lfloor{b}\rfloor\, \in\, \mathbb Z$ and $0\,\leq\, b-\lfloor{b}\rfloor\, <\, 1$. It
is straightforward to check that each $d_i$ is an integer.
\end{remark}

Consider the Atiyah bundle $$\text{At}\left(\text{Sym}^{r-1}(\mathcal{E}_*)\right)\, \subset\,
\text{Diff}^1\left({\mathcal F}^{(r)}, \, {\mathcal F}^{(r)}\right)$$
constructed as in \eqref{e9} for the parabolic bundle
$E_*\,=\, \text{Sym}^{r-1}(\mathcal{E}_*)$, where
${\mathcal F}^{(r)}$ is the vector bundle in \eqref{e40} and $\text{Sym}^{r-1}(\mathcal{E}_*)$
is the parabolic vector bundle in \eqref{e39}. Let
\begin{equation}\label{e41b}
\text{At}'_X(r)\, \subset\, \text{At}\left(\text{Sym}^{r-1}(\mathcal{E}_*)\right)\, \subset\,
\text{Diff}^1\left({\mathcal F}^{(r)}, \, {\mathcal F}^{(r)}\right)
\end{equation}
be the holomorphic subbundle of $\text{At}\left(\text{Sym}^{r-1}(\mathcal{E}_*)\right)$ constructed as follows:
The space of all holomorphic sections of $\text{At}'_X(r)$ over any open subset $U\, \subset \, X$
is the space of all first order differential operations
$$D_U\, :\, \Gamma\left(U,\, \text{Sym}^{r-1}(\mathcal{E}_*)\right)\, \longrightarrow\,
\Gamma\left(U,\, \text{Sym}^{r-1}(\mathcal{E}_*)\right)$$
such that
\begin{itemize}
\item $D_U\, \in\, \Gamma\left(U,\, \text{At}\left(\text{Sym}^{r-1}(\mathcal{E}_*)
\right)\right)$, and

\item $D_U\left({\mathcal F}^{(j)}\right)\, \subset\, {\mathcal F}^{(j)}$ for all $1\,\leq\, j\,
\leq\, r$, where ${\mathcal F}^{(j)}$ is the subbundle in \eqref{e40}.
\end{itemize}

Consider the subbundle $${\mathcal O}_X\, \subset\,
\text{Hom}\left({\mathcal F}^{(r)}, \,{\mathcal F}^{(r)}\right)\,=\,
\text{Diff}^0\left({\mathcal F}^{(r)}, \, {\mathcal F}^{(r)}\right)
\, \subset\, \text{Diff}^1\left({\mathcal F}^{(r)}, \, {\mathcal F}^{(r)}\right)$$ given by
pointwise multiplication. We note that
$$
{\mathcal O}_X\, \subset\, \text{At}'_X(r)\, ,
$$
where $\text{At}'_X(r)$ is the subsheaf in \eqref{e41b}. Let
\begin{equation}\label{e41}
\text{At}_X(r)\, :=\, \text{At}'_X(r)/{\mathcal O}_X
\end{equation}
be the quotient bundle.

Construct the holomorphic vector bundle 
$\text{End}^n\left(\text{Sym}^{r-1}(\mathcal{E}_*)\right)$ by substituting, in 
\eqref{e5}, the parabolic vector bundle $\text{Sym}^{r-1}(\mathcal{E}_*)$ in place of 
$E_*$. Let
$$
\text{ad}^n\left(\text{Sym}^{r-1}(\mathcal{E}_*)\right)\, \subset\,
\text{End}^n\left(\text{Sym}^{r-1}(\mathcal{E}_*)\right)
$$
be the holomorphic subbundle of co-rank one given by the intersection
of $\text{End}^n\left(\text{Sym}^{r-1}(\mathcal{E}_*)\right)$ with the
sheaf of endomorphisms of ${\mathcal F}^{(r)}$ of trace zero.
So using the natural inclusion map ${\mathcal O}_X(-S)\,\hookrightarrow\,
\text{End}^n\left(\text{Sym}^{r-1}(\mathcal{E}_*)\right)$ defined by
pointwise multiplication, we have
\begin{equation}\label{e41c}
\text{End}^n\left(\text{Sym}^{r-1}(\mathcal{E}_*)\right)\,=\,
\text{ad}^n\left(\text{Sym}^{r-1}(\mathcal{E}_*)\right)
\oplus {\mathcal O}_X(-S)\, .
\end{equation}
Let
$$
\text{End}^n_1\left(\text{Sym}^{r-1}(\mathcal{E}_*)\right)\, \subset\,
\text{End}^n\left(\text{Sym}^{r-1}(\mathcal{E}_*)\right)
$$
be the subbundle defined by imposing the condition that the subbundle ${\mathcal F}^{(i)}$ in 
\eqref{e40} is mapped to ${\mathcal F}^{(i+1)}$ for all $1\,\leq\, i\, \leq\, r-1$. In 
other words, a locally defined holomorphic section $s$ of the vector bundle
$\text{End}^n(\text{Sym}^{r-1}(\mathcal{E}_*))$ is a locally defined section of 
$\text{End}^n_1\left(\text{Sym}^{r-1}(\mathcal{E}_*)\right)$ if and only if $s\left({\mathcal 
F}^{(i)}\right)\, \subset\, {\mathcal F}^{(i+1)}$ for every $1\,\leq\, i\, \leq\, r-1$. Define
the intersection
\begin{equation}\label{g1}
\text{ad}^n_1\left(\text{Sym}^{r-1}(\mathcal{E}_*)\right)\,:=\,
\text{ad}^n\left(\text{Sym}^{r-1}(\mathcal{E}_*)\right)
\bigcap \text{End}^n_1\left(\text{Sym}^{r-1}(\mathcal{E}_*)\right)\, ;
\end{equation}
this intersection is taking place inside $\text{End}^n
\left(\text{Sym}^{r-1}(\mathcal{E}_*)\right)$. From \eqref{e41c} we have
\begin{equation}\label{g1b}
\text{End}^n_1\left(\text{Sym}^{r-1}(\mathcal{E}_*)\right)\,=\,
\text{ad}^n_1\left(\text{Sym}^{r-1}(\mathcal{E}_*)\right)
\oplus {\mathcal O}_X(-S)\, .
\end{equation}

Now let $D$ be a connection on the parabolic vector bundle
$\text{Sym}^{r-1}(\mathcal{E}_*)$ defining a parabolic
${\rm SL}(r,{\mathbb C})$--oper on $X$. In other words,
the connection on $\bigwedge^r\text{Sym}^{r-1}(\mathcal{E}_*)$
induced by $D$ coincides with the connection on ${\mathcal O}_X$ given by the de
Rham differential $d$. Consider the first order holomorphic differential operator
\begin{equation}\label{k6}
{\mathcal D}\, :\, \text{At}\left(\text{Sym}^{r-1}(\mathcal{E}_*)\right)\, \longrightarrow\,
\text{End}^n\left(\text{Sym}^{r-1}(\mathcal{E}_*)\right)\otimes K_X\otimes{\mathcal O}_X(S)
\end{equation}
constructed as in \eqref{e21} from $D$; substitute $(\text{Sym}^{r-1}(\mathcal{E}_*),\, D)$
in place of $(E_*,\, D)$ in \eqref{e21}.
The restriction of the differential operator ${\mathcal D}$ to the subbundle
$\text{At}'_X(r)$ constructed in \eqref{e41b} has its image contained in
$$
\text{End}^n_1\left(\text{Sym}^{r-1}(\mathcal{E}_*)\right)\otimes K_X\otimes{\mathcal O}_X(S)\, \subset\,
\text{End}^n\left(\text{Sym}^{r-1}(\mathcal{E}_*)\right)\otimes K_X\otimes{\mathcal O}_X(S)
$$
(see \eqref{g1b}), in other words,
\begin{equation}\label{k7}
{\mathcal D}(\text{At}'_X(r))\, \subset\,
\text{End}^n_1\left(\text{Sym}^{r-1}(\mathcal{E}_*)\right)\otimes K_X\otimes{\mathcal O}_X(S)\, .
\end{equation}
Furthermore, the differential operator ${\mathcal D}$
takes the subbundle ${\mathcal O}_X\, \subset\, \text{At}'_X(r)$ (see \eqref{e41})
to $$K_X\,=\, {\mathcal O}_X(-S)\otimes K_X\otimes{\mathcal O}_X(S)\, \subset\,
\text{End}^n_1\left(\text{Sym}^{r-1}(\mathcal{E}_*)\right)
\otimes K_X\otimes{\mathcal O}_X(S)\, ;$$
see \eqref{g1b} for the subbundle ${\mathcal O}_X(-S)\, \subset\,
\text{End}^n_1\left(\text{Sym}^{r-1}(\mathcal{E}_*)\right)$. In fact, the restriction
of ${\mathcal D}$ to ${\mathcal O}_X\, \subset\, \text{At}'_X(r)$ coincides with the
de Rham differential $d$. Consequently, using
\eqref{k7} and the decomposition in \eqref{g1b}, the differential operator
${\mathcal D}\big\vert_{\text{At}'_X(r)}$ in \eqref{k6} produces a differential operator
\begin{equation}\label{e42}
{\mathcal D}_B\, :\, \text{At}_X(r)\,
\longrightarrow\,\text{ad}^n_1\left(\text{Sym}^{r-1}(\mathcal{E}_*)\right)\otimes
K_X\otimes{\mathcal O}_X(S)\, .
\end{equation}

Let ${\mathcal C}_\bullet$ denote the following two-term complex of sheaves on $X$
\begin{equation}\label{e43}
{\mathcal C}_\bullet\,\,:\,\, {\mathcal C}_0\,=\, \text{At}_X(r)\,
\stackrel{\mathcal{D}_B}{\longrightarrow}\, {\mathcal C}_1\,=\,\text{ad}^n_1
\left(\text{Sym}^{r-1}(\mathcal{E}_*)\right)\otimes K_X\otimes{\mathcal O}_X(S)\, ,
\end{equation}
where $\mathcal{D}_B$ is the homomorphism in \eqref{e42}, and
${\mathcal C}_i$ is at the $i$-th position.

\begin{theorem}\label{thm1}
The space of all infinitesimal deformation of the triple $(X,\, S,\, D)$, where $D$ is
a parabolic ${\rm SL}(r,{\mathbb C})$--oper on $X$, is given by the hypercohomology
$$
{\mathbb H}^1({\mathcal C}_\bullet),\,
$$
where ${\mathcal C}_\bullet$ is the complex in \eqref{e43}.
\end{theorem}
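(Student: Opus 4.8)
The plan is to mimic the strategy already used for Lemma \ref{lem4}, passing through the previously established description of infinitesimal deformations of the quadruple $(X,\, S,\, \text{Sym}^{r-1}(\mathcal{E}_*),\, D)$ by ${\mathbb H}^1({\mathcal B}_\bullet)$, and then cutting out the locus corresponding to the oper condition. First I would recall that a parabolic ${\rm SL}(r,{\mathbb C})$--oper on $(X,\, S)$ is entirely determined by the data $(X,\, S,\, D)$, where $D$ is a connection on $\text{Sym}^{r-1}(\mathcal{E}_*)$ inducing the trivial connection on the parabolic determinant; the parabolic bundle $\text{Sym}^{r-1}(\mathcal{E}_*)$ and the oper flag \eqref{e40} are not extra choices but are rigidly built from $(X,\, S)$ (together with the fixed auxiliary choices ${\mathbb L},\, \varphi_0,\, K^{1/2}_X,\, I_X$ and the function $\mathbf{c}$, which do not vary in an infinitesimal deformation). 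So the deformation theory of the triple $(X,\, S,\, D)$ is the deformation theory of the quadruple $(X,\, S,\, \text{Sym}^{r-1}(\mathcal{E}_*),\, D)$ constrained by (i) the parabolic bundle must remain of the form $\text{Sym}^{r-1}$ of a deformation of $\mathcal{E}_*$ carrying the oper flag, and (ii) the induced determinant connection stays trivial.

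Next I would set up the comparison of complexes. The complex ${\mathcal B}_\bullet$ for $E_*\, =\, \text{Sym}^{r-1}(\mathcal{E}_*)$ has ${\mathcal B}_0\, =\, {\rm At}(\text{Sym}^{r-1}(\mathcal{E}_*))$ and ${\mathcal B}_1\, =\, \text{End}^n(\text{Sym}^{r-1}(\mathcal{E}_*))\otimes K_X\otimes{\mathcal O}_X(S)$. The subcomplex cutting out the oper deformations is ${\mathcal C}_\bullet$: in degree $0$ one replaces ${\rm At}$ by $\text{At}_X(r)\, =\, \text{At}'_X(r)/{\mathcal O}_X$, which by construction \eqref{e41b} consists of those first order operators preserving the oper flag \eqref{e40} modulo the scalar operators (the scalars are quotiented out because changing $D$ by a scalar one-form would change the determinant connection, violating the ${\rm SL}$--condition); in degree $1$ one replaces $\text{End}^n\otimes K_X\otimes{\mathcal O}_X(S)$ by $\text{ad}^n_1(\text{Sym}^{r-1}(\mathcal{E}_*))\otimes K_X\otimes{\mathcal O}_X(S)$, the trace-free part (from the ${\rm SL}$--condition, via \eqref{e41c}) of the flag-shifting endomorphisms (from preservation of the oper flag, via \eqref{g1b}). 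The key structural input is \eqref{k7} together with the fact that ${\mathcal D}$ restricted to ${\mathcal O}_X\, \subset\, \text{At}'_X(r)$ is the de Rham differential $d$; these were proved just before the statement and guarantee that ${\mathcal D}_B$ is well-defined and that ${\mathcal C}_\bullet$ is genuinely a quotient/sub situation compatible with ${\mathcal B}_\bullet$ after removing the ${\mathcal O}_X\, \xrightarrow{d}\, K_X$ acyclic piece.

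Then I would identify ${\mathbb H}^1({\mathcal C}_\bullet)$ with the oper deformations by a diagram chase. Concretely, I would produce the short exact sequence of complexes relating ${\mathcal C}_\bullet$, ${\mathcal B}_\bullet$ (or rather the ${\rm SL}$--version of ${\mathcal B}_\bullet$ obtained by trace-restriction), and the two-term complex $[{\mathcal O}_X\, \xrightarrow{d}\, K_X]$ sitting in degrees $0,1$; since the latter complex is quasi-isomorphic to $\mathbb{C}$ (constant sheaf) shifted, and the relevant hypercohomology of $\mathbb C$ in degree $1$ plays the role of forgetting the overall trivialization of the determinant, one reads off from the long exact hypercohomology sequence that ${\mathbb H}^1({\mathcal C}_\bullet)$ parametrizes precisely the deformations of $(X,\, S,\, D)$ that keep the determinant connection equal to $d$. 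I would also invoke the orbifold/equivariant dictionary exactly as in the proofs of Lemma \ref{lem2} and Lemma \ref{lem4} — pulling everything back to the ramified Galois cover $(Y,\, \varphi)$ of \eqref{gc}, where $\text{Sym}^{r-1}(\mathcal{E}_*)$, its oper flag, and $D$ become an equivariant bundle with equivariant flag and equivariant holomorphic ${\rm SL}$--oper — so that the statement reduces to the non-parabolic oper deformation statement (the analog of \cite{Sa}) on $Y$, taken in $\Gamma_\varphi$--invariants.

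The main obstacle I expect is bookkeeping the three simultaneous constraints — preservation of the oper flag, the trace-free (${\rm SL}$) reduction, and the quotient by scalars — and checking that they interact compatibly with the differential ${\mathcal D}$ so that ${\mathcal C}_\bullet$ is actually the correct subquotient complex of ${\mathcal B}_\bullet$ computing the oper deformations, rather than something slightly larger or smaller. In particular one must verify that every infinitesimal deformation of $(X,\, S,\, D)$ as an oper, when viewed inside ${\mathbb H}^1({\mathcal B}_\bullet)$, lies in the image of ${\mathbb H}^1({\mathcal C}_\bullet)$ and conversely that the class is unobstructed as an oper deformation — i.e. that no further integrability condition beyond membership in ${\mathbb H}^1({\mathcal C}_\bullet)$ is needed. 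This is where the rigidity of the oper flag (it is canonically reconstructed from $(X,\, S)$ and the connection data, so deforming within opers is the same as deforming within this sub-datum) does the real work, and I would spell that out carefully, again leaning on the reference for the classical case and the equivariant correspondence to transport it to the parabolic setting.
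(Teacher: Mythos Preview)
Your proposal is essentially the paper's approach: the paper proves the theorem by passing to the ramified Galois cover $Y$, invoking Sanders' result \cite[Theorem 5.9]{Sa} that the infinitesimal deformations of an oper on $Y$ are computed by the hypercohomology of the analogous two-term complex ${\mathcal H}_\bullet$ there, and then identifying ${\mathbb H}^1({\mathcal C}_\bullet)$ with the $\Gamma_\varphi$--invariants ${\mathbb H}^1({\mathcal H}_\bullet)^{\Gamma_\varphi}$. One technical point you glossed over that the paper handles carefully: on $Y$ one must work with $\text{PSL}(r,{\mathbb C})$--opers rather than $\text{SL}(r,{\mathbb C})$--opers, because the vector bundle $\text{Sym}^{r-1}(W)$ depends on the choice of theta characteristic $K^{1/2}_Y$ (which $\Gamma_\varphi$ need not fix), whereas the projective bundle ${\mathbb P}(\text{Sym}^{r-1}(W))$ is independent of that choice and hence genuinely $\Gamma_\varphi$--equivariant; your direct diagram-chase on $X$ with the acyclic piece $[{\mathcal O}_X\xrightarrow{d}K_X]$ is not something the paper does, and while plausible it would require separately justifying that the oper constraints on the quadruple deformation space are exactly captured by the subquotient you describe.
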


\begin{proof}
As in \eqref{gc}, take a ramified Galois covering
\begin{equation}\label{gc2}
\varphi\, :\, Y\, \longrightarrow\, X
\end{equation}
satisfying the following two conditions:
\begin{itemize}
\item $\varphi$ is unramified over the complement $X\setminus S$, and

\item for every $x_i\, \in\, S$ and each point $y\, \in\, \varphi^{-1}(x_i)$, the order of
ramification of $\rho$ at $y$ is $c_i\,=\, \mathbf{c}(x_i)$ (see \eqref{e37}).
\end{itemize}
Such a covering $\varphi$ exists by \cite[p. 26, Proposition 1.2.12]{Na}. As before,
denote by $\Gamma_\varphi$ the Galois group $\text{Gal}(\varphi)$ of the map
$\varphi$. From \cite[p.~514, Theorem 6.3]{BDP} we know that the
parabolic ${\rm SL}(r,{\mathbb C})$--oper $D$ on $X$ corresponds to a $\Gamma$--invariant
${\rm PSL}(r,{\mathbb C})$--oper $\mathbb D$ on $Y$. We note that there is a natural bijection
between the ${\rm PSL}(r,{\mathbb C})$--opers on $Y$ and each connected component of the
${\rm SL}(r,{\mathbb C})$--opers on $Y$.
We will recall a description of this ${\rm PSL}(r,{\mathbb C})$--oper $\mathbb D$ on $Y$
corresponding to $D$.

We first note that the parabolic vector bundle $\text{Sym}^{r-1}(\mathcal{E}_*)$ defines a 
holomorphic parabolic principal $\text{SL}(r,{\mathbb C})$--bundle on $X$, because
$\bigwedge^r \text{Sym}^{r-1}(\mathcal{E}_*)$ is the trivial
parabolic line bundle; see 
\cite{BBN1}, \cite{BBN2}, \cite{BBP} for the parabolic analog of principal bundles. Using 
the quotient map $\text{SL}(r,{\mathbb C})\, \longrightarrow\, \text{PSL}(r,{\mathbb C})$, 
a parabolic principal $\text{SL}(r,{\mathbb C})$--bundle produces parabolic principal 
$\text{PSL}(r,{\mathbb C})$--bundle. Let
\begin{equation}\label{ec3}
\widetilde{\mathbb P}_*\, \longrightarrow\, X
\end{equation}
denote the parabolic principal $\text{PSL}(r,{\mathbb C})$--bundle
given by the parabolic principal 
$\text{SL}(r,{\mathbb C})$--bundle on $X$ defined by $\text{Sym}^{r-1}(\mathcal{E}_*)$.

Fix a Borel subgroup
\begin{equation}\label{ec4}
B\, \subset\, \text{PSL}(r,{\mathbb C})\, .
\end{equation}
The filtration of parabolic vector bundles $\{{\mathcal F}^{(j)}_*\}_{j=1}^{r-1}$
in \eqref{e39} produces a reduction of structure group of the
parabolic principal $\text{PSL}(r,{\mathbb C})$--bundle $\widetilde{\mathbb P}_*$
in \eqref{ec3} to the subgroup $B$ of $\text{PSL}(r,{\mathbb C})$ in \eqref{ec4}. Let
\begin{equation}\label{ec5}
\widetilde{\mathbb P}(B)_*\, \subset\, \widetilde{\mathbb P}_*
\end{equation}
be this reduction of structure group to $B$.

The parabolic principal $\text{PSL}(r,{\mathbb C})$--bundle $\widetilde{\mathbb P}_*$ 
in \eqref{ec3}
corresponds to an equivariant holomorphic principal $\text{PSL}(r,{\mathbb C})$--bundle on 
$Y$ \cite{BBN2}, \cite{BBN1}; let
\begin{equation}\label{n2}
{\mathbb P}(r)\, \longrightarrow\, Y
\end{equation}
be the equivariant holomorphic principal 
$\text{PSL}(r,{\mathbb C})$--bundle corresponds to $\widetilde{\mathbb P}_*$. We note 
that ${\mathbb P}(r)$ is the unique holomorphic principal $\text{PSL}(r,{\mathbb 
C})$--bundle on $Y$ underlying the $\text{PSL}(r,{\mathbb C})$--opers on $Y$. We will 
briefly recall a description of ${\mathbb P}(r)$.

Take a theta characteristic $K^{1/2}_Y$ on $Y$. Let
\begin{equation}\label{tw}
0\, \longrightarrow\, K^{1/2}_Y \, \longrightarrow\, W \, \longrightarrow\,
\left(K^{1/2}_Y\right)^* \, \longrightarrow\, 0
\end{equation}
be the nontrivial extension corresponding to
$$1\, \in\, {\mathbb C}\,=\, H^0(Y,\,{\mathcal O}_Y)^*\,=\, H^1(Y,\, K_Y)\,=\, H^1(Y,\, 
\text{Hom}((K^{1/2}_Y)^*,\, K^{1/2}_Y))\, .$$ The holomorphic principal 
$\text{PSL}(r,{\mathbb C})$--bundle ${\mathbb P}(r)$
in \eqref{n2} coincides with the holomorphic 
principal $\text{PSL}(r,{\mathbb C})$--bundle defined by the projective bundle ${\mathbb 
P}\left(\text{Sym}^{r-1}(W)\right)$. (By $\mathbb{P}(V)$ we denote the
projective bundle defined by the spaces of lines in the fibers of $V$.)
While the vector bundle $\text{Sym}^{r-1}(W)$ depends on the 
choice of the theta characteristic $K^{1/2}_Y$ when $r$ is even (the vector bundle 
$\text{Sym}^{r-1}(W)$ does not depend on the choice of $K^{1/2}_Y$ when $r$ is odd), the 
projective bundle ${\mathbb P}\left(\text{Sym}^{r-1}(W)\right)$ is actually independent of the choice 
of the theta characteristic $K^{1/2}_Y$. Indeed, if we replace $K^{1/2}_Y$ by 
$K^{1/2}_Y\otimes \xi$, where $\xi$ is a holomorphic line bundle on $Y$ of order two, then
$W$ gets replaced by $W\otimes\xi$, and hence
$\text{Sym}^{r-1}(W)$ gets replaced by $\text{Sym}^{r-1}(W)\otimes \xi^{\otimes (r-1)}$.

Since any holomorphic automorphism of $Y$ takes a theta characteristic on $Y$ to a 
(possibly different) theta characteristic on $Y$, from the above property of ${\mathbb 
P}\left(\text{Sym}^{r-1}(W)\right)$ it follows immediately that ${\mathbb P}(r)$ is an 
equivariant holomorphic principal $\text{PSL}(r,{\mathbb C})$--bundle on $Y$. In other 
words, the action of $\Gamma_\varphi$ on $Y$ lifts to an holomorphic action of 
$\Gamma_\varphi$ on ${\mathbb P}(r)$ that commutes with the action of 
$\text{PSL}(r,{\mathbb C})$ on the principal bundle ${\mathbb P}(r)$.

In particular, ${\mathbb P}(W)$ is an equivariant vector bundle. The action of
$\Gamma_\varphi$ on ${\mathbb P}(W)$ preserves the holomorphic section of the projective
bundle ${\mathbb P}(W)$ given by the line subbundle $K^{1/2}_Y$ in \eqref{tw}.

The connection $D$ on the parabolic vector bundle $\text{Sym}^{r-1}(\mathcal{E}_*)$ produces
a connection on the parabolic principal $\text{PSL}(r,{\mathbb C})$--bundle
$\widetilde{\mathbb P}_*$ on $X$ given by ${\mathbb P}\left(\text{Sym}^{r-1}(\mathcal{E}_*)\right)$. This
connection on $\widetilde{\mathbb P}_*$ in turn 
produces a $\Gamma_\varphi$--invariant holomorphic connection on the
principal $\text{PSL}(r,{\mathbb C})$--bundle ${\mathbb P}(r)$ on $Y$. This
$\Gamma_\varphi$--invariant holomorphic connection on ${\mathbb P}(r)$ will
be denoted by
\begin{equation}\label{g2}
D^Y.
\end{equation}

Any holomorphic connection on ${\mathbb P}(r)$ is a $\text{PSL}(r,{\mathbb C})$--oper on 
$Y$ \cite{BD1}, \cite{BD2}, \cite{Fr}. In particular, $D^Y$ in \eqref{g2} is a 
$\text{PSL}(r,{\mathbb C})$--oper on $Y$. Given an oper, a complex of sheaves was 
constructed by Sanders in \cite{Sa}. We will briefly recall his construction of complex
of sheaves for the $\text{PSL}(r,{\mathbb C})$--oper $D^Y$ in \eqref{g2}.

Let
$$
{\mathcal W}_1\, \subset\, {\mathcal W}_2 \, \subset\, \cdots \, \subset\,
{\mathcal W}_{r-1} \, \subset\,{\mathcal W}_r\,=\, \text{Sym}^{r-1}(W)
$$
be the filtration of holomorphic subbundles, where ${\mathcal W}_j\,=\,
(K^{1/2}_Y)^{\otimes (r-j)}\otimes \text{Sym}^{j-1}(W)$ (see \eqref{tw});
in particular ${\rm rank}({\mathcal W}_j)\,=\, j$. Let
\begin{equation}\label{n1}
{\mathbb P}({\mathcal W}_1)\, \subset\, {\mathbb P}({\mathcal W}_2) \, \subset\, \cdots \,
\subset\,{\mathbb P}({\mathcal W}_{r-1}) \, \subset\,{\mathbb P}({\mathcal W}_r)\,=\,
{\mathbb P}(\text{Sym}^{r-1}(W))
\end{equation}
be the corresponding filtration of projective bundles; recall that $\mathbb{P}(V)$ denotes 
the projective bundle defined by the spaces of lines in the fibers of $V$. The filtration 
in \eqref{n1} produces a holomorphic reduction of structure group
\begin{equation}\label{br}
{\mathbb P}(r)_B\, \subset\, {\mathbb P}(r)
\end{equation}
to the Borel subgroup $B$ in \eqref{ec4}, where ${\mathbb P}(r)$ is the
holomorphic principal $\text{PSL}(r,{\mathbb C})$--bundle in \eqref{n2}.

Recall that ${\mathbb P}(W)$ is an equivariant vector bundle, and the action of 
$\Gamma_\varphi$ on ${\mathbb P}(W)$ preserves the holomorphic section of ${\mathbb P}(W)$ 
given by the line subbundle $K^{1/2}_Y$ in \eqref{tw}. Therefore, from the construction of 
the filtration in \eqref{n1} it follows immediately that the action of $\Gamma_\varphi$ on 
${\mathbb P}(\text{Sym}^{r-1}(W))$ preserves each projective subbundle ${\mathbb 
P}({\mathcal W}_j)$. Consequently, the action of $\Gamma_\varphi$ on ${\mathbb P}(r)$ 
preserves the reduction of the structure group ${\mathbb P}(r)_B$ in \eqref{br}. The 
principal $B$--bundle ${\mathbb P}(r)_B$ in \eqref{br} in fact corresponds to the 
reduction $\widetilde{\mathbb P}(B)_*$ in \eqref{ec5}.

The holomorphic reduction ${\mathbb P}(r)_B$ in \eqref{br} coincides 
with the holomorphic reduction of structure group of ${\mathbb P}(r)$ to the subgroup $B\, 
\subset\, \text{PSL}(r,{\mathbb C})$ that appears in the definition of a 
$\text{PSL}(r,{\mathbb C})$--oper on $Y$. Let
$$
\text{At}({\mathbb P}(r)_B)\, \longrightarrow\, Y
$$
be the Atiyah bundle for the holomorphic principal $B$--bundle
${\mathbb P}(r)_B$ (see \cite{At}). Let
$$
\text{At}({\mathbb P}(r))\, \longrightarrow\, Y
$$
be the Atiyah bundle for the principal $\text{PSL}(r,{\mathbb C})$--bundle ${\mathbb P}(r)$
in \eqref{n2}. We have
\begin{equation}\label{e44}
\text{At}({\mathbb P}(r)_B)\, \subset\, \text{At}({\mathbb P}(r))
\end{equation}
because of the reduction of structure group in \eqref{br}.

Let $\text{ad}({\mathbb P}(r))\, \longrightarrow\, Y$ be the adjoint bundle
of the holomorphic principal $\text{PSL}(r,{\mathbb C})$--bundle ${\mathbb P}(r)$. We recall
that $\text{ad}({\mathbb P}(r))$ is the holomorphic vector bundle associated to
the principal $\text{PSL}(r,{\mathbb C})$--bundle ${\mathbb P}(r)$ for the
adjoint action of $\text{PSL}(r,{\mathbb C})$ on its Lie algebra. We will describe
$\text{ad}({\mathbb P}(r))$ explicitly. Let
$$
\gamma\, :\, {\mathbb P}(\text{Sym}^{r-1}(W))\, \longrightarrow\, Y
$$
be the natural projection. Let
$$
T_\gamma\, \subset\, T{\mathbb P}(\text{Sym}^{r-1}(W))
$$
be the relative holomorphic tangent bundle for the projection $\gamma$, meaning $T_\gamma$ 
is the kernel of the differential $d\gamma\, :\, T{\mathbb P}(\text{Sym}^{r-1}(W)) \, 
\longrightarrow\, \gamma^* TY$ of $\gamma$. Then we have
\begin{equation}\label{n3}
\text{ad}({\mathbb P}(r))\,=\, \gamma_*T_\gamma\, .
\end{equation}

Given any holomorphic connection on the principal $\text{PSL}(r,{\mathbb C})$--bundle 
${\mathbb P}(r)$, there is a holomorphic differential operator of order one from
$\text{At}({\mathbb P}(r))$ to $\text{ad}({\mathbb P}(r))\otimes K_Y$ \cite[p.~1415, (1)]{Ch2},
\cite[p.~1415, Proposition 4.4]{Ch2}. Consider the
differential operator $$\text{At}({\mathbb P}(r))\, \longrightarrow\, \text{ad}({\mathbb P}(r))
\otimes K_Y$$ corresponding to the connection $D^Y$ in \eqref{g2}. Let
\begin{equation}\label{g3}
\widetilde{D}^Y\, :\, \text{At}({\mathbb P}(r)_B)\, \longrightarrow\,
\text{ad}({\mathbb P}(r))\otimes K_Y
\end{equation}
be its restriction to the subbundle $\text{At}({\mathbb P}(r)_B)$ (see \eqref{e44}).

Let $\text{ad}({\mathbb P}(r)_B)\, \longrightarrow\, Y$ be the adjoint bundle for the
holomorphic principal $B$--bundle ${\mathbb P}(r)_B$. Note that we have
$$
\text{ad}({\mathbb P}(r)_B)\,\subset\, \text{ad}({\mathbb P}(r))
$$
because of the reduction of structure group in \eqref{br}. Recall the description
of $\text{ad}({\mathbb P}(r))$ in \eqref{n3}. For any $y\, \in\, Y$, the subspace
$$
\text{ad}({\mathbb P}(r)_B)_y\,\subset\, \text{ad}({\mathbb P}(r))_y
$$
consists of all holomorphic vector fields $v$ on ${\mathbb P}(\text{Sym}^{r-1}(W))_y$
satisfying the following condition: for any $1\, \leq\, j\, \leq\, r-1$, and
any
$$
z\, \in\, {\mathbb P}({\mathcal W}_j)_y \, \subset\,
{\mathbb P}(\text{Sym}^{r-1}(W))_y
$$
(see \eqref{n1}),
$$
v(z)\, \in\, T_z {\mathbb P}({\mathcal W}_j)_y\, ,
$$
note that $T_z {\mathbb P}({\mathcal W}_j)_y\, \subset\,T_z{\mathbb P}(\text{Sym}^{r-1}(W))_y$
because ${\mathbb P}({\mathcal W}_j)_y \, \subset\,
{\mathbb P}(\text{Sym}^{r-1}(W))_y$. Let
$$
R_n(\text{ad}({\mathbb P}(r)_B))\, \subset\, \text{ad}({\mathbb P}(r)_B)
$$
be the subbundle given by the nilpotent radical bundle; so for any $y\, \in\, Y$,
the subspace $R_n(\text{ad}({\mathbb P}(r)_B))_y\, \subset\, \text{ad}({\mathbb P}(r)_B)_y$
is the nilpotent radical. So, for any $y\, \in\, Y$, we have
$$
R_n(\text{ad}({\mathbb P}(r)_B))_y\,=\, [\text{ad}({\mathbb P}(r)_B)_y,\,
\text{ad}({\mathbb P}(r)_B)_y]\, .
$$
Let
$$
[R_n(\text{ad}({\mathbb P}(r)_B),\, R_n(\text{ad}({\mathbb P}(r)_B)]
\, \subset\,R_n(\text{ad}({\mathbb P}(r)_B)
$$
be the commutator. For any $y\, \in\, Y$, we have
$$
[R_n(\text{ad}({\mathbb P}(r)_B),\, R_n(\text{ad}({\mathbb P}(r)_B)]_y\,=\,
[R_n(\text{ad}({\mathbb P}(r)_B)_y,\, R_n(\text{ad}({\mathbb P}(r)_B)_y]\, .
$$
Let
\begin{equation}\label{e45}
\text{ad}_1({\mathbb P}(r)_B)\, :=\,
[R_n(\text{ad}({\mathbb P}(r)_B),\, R_n(\text{ad}({\mathbb P}(r)_B)]^\perp
\, \subset\, \text{ad}({\mathbb P}(r))
\end{equation}
be the annihilator of $[R_n(\text{ad}({\mathbb P}(r)_B)),\,
R_n(\text{ad}({\mathbb P}(r)_B))]$ for the fiberwise Killing form on
the adjoint bundle $\text{ad}({\mathbb P}(r))$. The vector bundle
$\text{ad}_1({\mathbb P}(r)_B)$ has the following description in terms of the
isomorphism in \eqref{n3}. For any $y\, \in\, Y$, a holomorphic vector field
$v$ on ${\mathbb P}(\text{Sym}^{r-1}(W))_y$ lies in the fiber
$\text{ad}_1({\mathbb P}(r)_B)_y$ if and only
if the following condition holds:
for any $1\, \leq\, j\, \leq\, r-1$ and
any $z\, \in\, {\mathbb P}({\mathcal W}_j)_y$,
$$
v(z)\, \in\, T_z {\mathbb P}({\mathcal W}_{j+1})_y\, .
$$

Since the connection $D^Y$ in \eqref{g2} is a $\text{PSL}(r,{\mathbb C})$--oper
on $Y$, the image
of the differential operator $\widetilde{D}^Y$ in \eqref{g3} is contained in the subbundle
$$
\text{ad}_1({\mathbb P}(r)_B)\otimes K_Y\, \subset\, \text{ad}({\mathbb P}(r))\otimes K_Y
$$
defined in \eqref{e45}. Therefore, $\widetilde{D}^Y$ defines a differential operator
\begin{equation}\label{e46}
{D}^Y_1\, :\, \text{At}({\mathbb P}(r)_B)\, \longrightarrow\,
\text{ad}_1({\mathbb P}(r)_B)\otimes K_Y\, ;
\end{equation}
see \cite[(5.8)]{Sa}. Let ${\mathcal H}_\bullet$ be the following two-term complex of
sheaves on $Y$
\begin{equation}\label{e47}
{\mathcal H}_\bullet\,\,:\,\, {\mathcal H}_0\,=\, \text{At}({\mathbb P}(r)_B)\,
\stackrel{D^Y_1}{\longrightarrow}\, {\mathcal H}_1\,=\,
\text{ad}_1({\mathbb P}(r)_B)\otimes K_Y\, ,
\end{equation}
where $D^Y_1$ is the homomorphism in \eqref{e46}, and
${\mathcal H}_i$ is at the $i$-th position.

The space of all infinitesimal deformations of the $\text{PSL}(r,{\mathbb C})$--oper $(Y,\, D^Y)$
is given by the hypercohomology ${\mathbb H}^1({\mathcal H}_\bullet)$, where
${\mathcal H}_\bullet$ is the complex constructed in \eqref{e47} \cite[Theorem 5.9]{Sa}.

We noted earlier that the action of $\Gamma_\varphi$ on ${\mathbb P}(r)$ 
preserves the reduction of structure group
${\mathbb P}(r)_B$ in \eqref{br}. Since ${\mathbb P}(r)_B$ is an equivariant 
bundle, we conclude that that both $\text{At}({\mathbb P}(r)_B)$ and
$\text{ad}({\mathbb P}(r)_B)$ are equivariant vector bundles. Hence
$R_n(\text{ad}({\mathbb P}(r)_B))$ in \eqref{e45} is an equivariant subbundle of
$\text{ad}({\mathbb P}(r)_B)$,
which in turn implies that $\text{ad}_1({\mathbb P}(r)_B)$ in \eqref{e45} is an equivariant
subbundle of $\text{ad}({\mathbb P}(r))$; the fiberwise Killing form on
$\text{ad}({\mathbb P}(r))$ is evidently $\Gamma_\varphi$--invariant.
The operator ${D}^Y_1$ in \eqref{e46} is $\Gamma_\varphi$--equivariant, because
the connection $D^Y$ in \eqref{g2} is invariant under the action of $\Gamma_\varphi$
on ${\mathbb P}(r)$. Consequently,
the complex ${\mathcal H}_\bullet$ in \eqref{e47} is $\Gamma_\varphi$--equivariant.
Therefore, the group $\Gamma_\varphi$ acts on
the hypercohomology ${\mathbb H}^1({\mathcal H}_\bullet)$. Let
\begin{equation}\label{e48}
{\mathbb H}^1({\mathcal H}_\bullet)^{\Gamma_\varphi}\, \subset\,
{\mathbb H}^1({\mathcal H}_\bullet)
\end{equation}
be the invariant part for the action of $\Gamma_\varphi$.

We have 
$$
(\varphi_*\text{At}({\mathbb P}(r)_B))^{\Gamma_\varphi}\,=\, \text{At}_X(r)
$$
and
$$
(\varphi_*(\text{ad}_1({\mathbb P}(r)_B)\otimes K_Y))^{\Gamma_\varphi}
\,=\, \text{ad}^n_1\left(\text{Sym}^{r-1}(\mathcal{E}_*)\right)\otimes K_X\otimes{\mathcal O}_X(S)
$$ 
(see \eqref{e43} for $\text{At}_X(r)$ and $\text{ad}^n_1
(\text{Sym}^{r-1}(\mathcal{E}_*))\otimes K_X\otimes{\mathcal O}_X(S)$). Moreover, the differential
operator ${D}^Y_1$ in \eqref{e46} gives the differential operator $\mathcal{D}_B$ in
\eqref{e43}. Therefore, we conclude that
$$
{\mathbb H}^1({\mathcal C}_\bullet)\,=\, {\mathbb H}^1({\mathcal H}_\bullet)^{\Gamma_\varphi}\, ,
$$
where ${\mathcal C}_\bullet$ is the complex in \eqref{e43} (see \eqref{e48}). Now the theorem
follows from the earlier mentioned result of \cite{Sa} that
the infinitesimal deformations of the $\text{PSL}(r,{\mathbb C})$--oper $(Y,\, D^Y)$
are parametrized by ${\mathbb H}^1({\mathcal H}_\bullet)$.
\end{proof}

Consider the short exact sequence of holomorphic vector bundles on $X$
\begin{equation}\label{k1}
0\, \longrightarrow\, {\mathcal O}_X\, \longrightarrow\,\text{At}'_X(r)
\, \longrightarrow\, \text{At}_X(r)\, \longrightarrow\, 0
\end{equation}
in \eqref{e41}. We will show that it splits holomorphically.

Take any
$$\delta\, \in\, \Gamma\left(U,\, \text{At}\left(\text{Sym}^{r-1}(\mathcal{E}_*)\right)\right)
\, \subset\, \Gamma\left(U,\,
\text{Diff}^1\left({\mathcal F}^{(r)}, \, {\mathcal F}^{(r)}\right)\right)
$$
(see \eqref{e41b}), where $U\, \subset\, X$ is an open
subset. Then $\delta$ produces a holomorphic differential operator
\begin{equation}\label{k3}
\widetilde{\delta}\, \in\, \Gamma\left(U,\, \text{Diff}^1
\left(\bigwedge\nolimits^r{\mathcal F}^{(r)}, \, \bigwedge\nolimits^r{\mathcal F}^{(r)}
\right)\right)
\end{equation}
which is constructed as follows: Take any
\begin{equation}\label{k2}
s\, =\, s_1\wedge \cdots\wedge s_r\, \in\, \Gamma\left(U,\,
\bigwedge\nolimits^r{\mathcal F}^{(r)}\right)
\end{equation}
where $s_i\, \in\, \Gamma\left(U,\, {\mathcal F}^{(r)}\right)$ for all
$1\, \leq\, i\, \leq\, r$. Now define
$$
\widetilde{\delta}(s)\,:=\, \sum_{j=1}^r s_1\wedge\cdots\wedge s_{j-1}\wedge
\delta (s_j)\wedge s_{j+1}\wedge \cdots\wedge s_r\, \in\, \Gamma\left(U,\,
\bigwedge\nolimits^r{\mathcal F}^{(r)}\right)\, .
$$
It is straightforward to check that $\widetilde{\delta}(s)$ is indeed independent of
the choice of the decomposition of of the section $s$ in \eqref{k2}.

Let
\begin{equation}\label{k4}
\eta\,\,:\,\, \text{At}'_X(r)\, \longrightarrow\, TX\otimes{\mathcal O}_X(-S)
\end{equation}
be the restriction of the natural projection $\text{At}\left(\text{Sym}^{r-1}
(\mathcal{E}_*)\right)\, \longrightarrow\, TX\otimes{\mathcal O}_X(-S)$
(see \eqref{e10}) to the subbundle $\text{At}'_X(r)\, \subset\,
\text{At}\left(\text{Sym}^{r-1}(\mathcal{E}_*)\right)$ in \eqref{e41b}.

We recall from Remark \ref{rem3} that $\bigwedge\nolimits^r{\mathcal F}^{(r)}
\,=\, {\mathcal O}_X(-\sum_{i=1}^n d_ix_i)$. Therefore, the de Rham differential $d$
on ${\mathcal O}_X$ produces a logarithmic connection on
${\mathcal O}_X(-\sum_{i=1}^n d_ix_i)\,=\, \bigwedge\nolimits^r{\mathcal F}^{(r)}$. Let
\begin{equation}\label{etd}
\widetilde{d}\,\, :\,\, \bigwedge\nolimits^r{\mathcal F}^{(r)} \,\longrightarrow\,
\bigwedge\nolimits^r{\mathcal F}^{(r)} \otimes K_X\otimes
{\mathcal O}_X(\sum_{i=1}^n x_i)
\end{equation}
be the logarithmic connection on $\bigwedge\nolimits^r{\mathcal F}^{(r)}$
given by the de Rham differential.

Let
\begin{equation}\label{k5}
\text{At}^0_X(r)\, \subset\, \text{At}'_X(r)
\end{equation}
be the holomorphic subbundle whose holomorphic sections over any open subset
$U\, \subset\, X$ consist of all
$$
\delta\, \in\, \Gamma(U,\, \text{At}'_X(r))
$$
satisfying the following condition:
$$
\widetilde{\delta}(s)\,=\, \langle\widetilde{d}(s),\, \eta(\delta)\rangle
$$
for all $s\, \in\, \Gamma\left(U,\, \bigwedge\nolimits^r{\mathcal F}^{(r)}\right)$,
where $\widetilde{\delta}$ is constructed in \eqref{k3} from
$\delta$ and $\eta$ is the homomorphism in
\eqref{k4}, while and $\langle-,\, -\rangle$ is the duality pairing in \eqref{dp}
and $\widetilde{d}$ is constructed in \eqref{etd}.

The composition of homomorphisms
$$
\text{At}^0_X(r)\, \hookrightarrow\, \text{At}'_X(r) \, \longrightarrow\, \text{At}_X(r)
$$
(see \eqref{k5} and \eqref{k1} for these homomorphisms) is evidently an
isomorphism. Therefore, the holomorphic subbundle $\text{At}^0_X(r)$ in \eqref{k5} produces
a holomorphic splitting of the short exact sequence in \eqref{k1}.

Let $D$ be a connection on $\text{Sym}^{r-1}(\mathcal{E}_*)$ defining a parabolic
${\rm SL}(r,{\mathbb C})$--oper on $X$.
Consider the differential operator ${\mathcal D}$ in \eqref{k6} constructed
from the ${\rm SL}(r,{\mathbb C})$--oper $D$. Clearly, we have
$$
{\mathcal D}\left(\text{At}^0_X(r)\right)\, \subset\, \text{ad}^n_1
\left(\text{Sym}^{r-1}(\mathcal{E}_*)\right)\otimes K_X\otimes{\mathcal O}_X(S)\, ,
$$
where $\text{ad}^n_1\left(\text{Sym}^{r-1}(\mathcal{E}_*)\right)$
and $\text{At}^0_X(r)$ are constructed in \eqref{g1} and \eqref{k5}
respectively; recall from \eqref{k7} that
$$
{\mathcal D}\left(\text{At}'_X(r)\right)\, \subset\,
\text{End}^n_1\left(\text{Sym}^{r-1}(\mathcal{E}_*)\right)\otimes K_X\otimes{\mathcal O}_X(S)
$$
and $\text{End}^n_1\left(\text{Sym}^{r-1}(\mathcal{E}_*)\right)$ decomposes as
in \eqref{g1b}.

Consequently, the complex of sheaves ${\mathcal C}_\bullet$ in \eqref{e43}
is equivalent to the following complex ${\mathcal C}'_\bullet$ of sheaves on $X$
\begin{equation}\label{k8}
{\mathcal C}'_\bullet\,\,:\,\, {\mathcal C}'_0\,=\, \text{At}^0_X(r)\,
\stackrel{\mathcal{D}}{\longrightarrow}\, {\mathcal C}'_1\,=\,\text{ad}^n_1
\left(\text{Sym}^{r-1}(\mathcal{E}_*)\right)\otimes K_X\otimes{\mathcal O}_X(S)\, .
\end{equation}
Hence Theorem \ref{thm1} gives the following:

\begin{corollary}\label{cor2}
The space of all infinitesimal deformation of the triple $(X,\, S,\, D)$, where $D$ is
a parabolic ${\rm SL}(r,{\mathbb C})$--oper on $X$, is given by the hypercohomology
$$
{\mathbb H}^1({\mathcal C}'_\bullet),\,
$$
where ${\mathcal C}'_\bullet$ is the complex in \eqref{k8}.
\end{corollary}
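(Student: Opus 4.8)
The plan is to deduce the corollary from Theorem~\ref{thm1} by exhibiting an isomorphism of complexes of sheaves $\mathcal{C}_\bullet\,\cong\,\mathcal{C}'_\bullet$, which immediately yields $\mathbb{H}^1(\mathcal{C}_\bullet)\,\cong\,\mathbb{H}^1(\mathcal{C}'_\bullet)$; since Theorem~\ref{thm1} identifies the space of infinitesimal deformations of $(X,\, S,\, D)$ with $\mathbb{H}^1(\mathcal{C}_\bullet)$, nothing more is required.

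First I would invoke the holomorphic splitting of the short exact sequence \eqref{k1} constructed just before the corollary: the composition $\text{At}^0_X(r)\,\hookrightarrow\,\text{At}'_X(r)\,\longrightarrow\,\text{At}_X(r)$ is an isomorphism of holomorphic vector bundles, which I denote $\psi\,:\,\text{At}^0_X(r)\,\stackrel{\sim}{\longrightarrow}\,\text{At}_X(r)$. This identifies the degree-zero terms $\mathcal{C}'_0\,=\,\text{At}^0_X(r)$ and $\mathcal{C}_0\,=\,\text{At}_X(r)$, while the degree-one terms $\mathcal{C}'_1$ and $\mathcal{C}_1$ are literally the same sheaf $\text{ad}^n_1\left(\text{Sym}^{r-1}(\mathcal{E}_*)\right)\otimes K_X\otimes{\mathcal O}_X(S)$. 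Next I would check that $\psi$ is compatible with the two differentials, i.e. that $\mathcal{D}_B\circ\psi$ agrees with the restriction $\mathcal{D}\big\vert_{\text{At}^0_X(r)}$. By construction \eqref{e42}, $\mathcal{D}_B$ is the operator induced on the quotient $\text{At}_X(r)\,=\,\text{At}'_X(r)/{\mathcal O}_X$ by $\mathcal{D}\big\vert_{\text{At}'_X(r)}$ followed by the projection onto the first summand of the decomposition $\text{End}^n_1\left(\text{Sym}^{r-1}(\mathcal{E}_*)\right)\,=\,\text{ad}^n_1\left(\text{Sym}^{r-1}(\mathcal{E}_*)\right)\oplus{\mathcal O}_X(-S)$ from \eqref{g1b}; that this descends to the quotient uses that $\mathcal{D}$ restricted to ${\mathcal O}_X\,\subset\,\text{At}'_X(r)$ is the de Rham differential, landing in the ${\mathcal O}_X(-S)$-summand. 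Composing with $\psi$, the right-hand side becomes $\mathcal{D}$ applied to $\text{At}^0_X(r)\,\subset\,\text{At}'_X(r)$ followed by the projection onto $\text{ad}^n_1$; but by the inclusion $\mathcal{D}\left(\text{At}^0_X(r)\right)\,\subset\,\text{ad}^n_1\left(\text{Sym}^{r-1}(\mathcal{E}_*)\right)\otimes K_X\otimes{\mathcal O}_X(S)$ recorded just before the corollary, that projection is the identity on the relevant image, whence $\mathcal{D}_B\circ\psi\,=\,\mathcal{D}\big\vert_{\text{At}^0_X(r)}$. Therefore $\psi$ is an isomorphism of complexes $\mathcal{C}'_\bullet\,\stackrel{\sim}{\longrightarrow}\,\mathcal{C}_\bullet$, and the corollary follows by combining $\mathbb{H}^1(\mathcal{C}'_\bullet)\,\cong\,\mathbb{H}^1(\mathcal{C}_\bullet)$ with Theorem~\ref{thm1}.

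The step I expect to carry the weight is the last identity, $\mathcal{D}_B\circ\psi\,=\,\mathcal{D}\big\vert_{\text{At}^0_X(r)}$. It amounts to verifying that $\mathcal{D}$ sends $\text{At}^0_X(r)$ into the $\text{ad}^n_1$-direction of the Killing-form decomposition \eqref{g1b} rather than merely into $\text{End}^n_1$, and this in turn rests on the way $\text{At}^0_X(r)$ is cut out in \eqref{k5} by the condition $\widetilde{\delta}(s)\,=\,\langle\widetilde{d}(s),\,\eta(\delta)\rangle$ on sections $s$ of $\bigwedge\nolimits^r{\mathcal F}^{(r)}$: the ${\mathcal O}_X(-S)$-component of $\mathcal{D}(\delta)$ is precisely the defect between the operator $\widetilde{\delta}$ induced by $\delta$ on $\bigwedge\nolimits^r{\mathcal F}^{(r)}$ and the de Rham logarithmic connection $\widetilde{d}$, and that defect vanishes exactly when $\delta\,\in\,\text{At}^0_X(r)$. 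Carrying this out requires unwinding the definitions of $\mathcal{D}$ in \eqref{e21}, of $\widetilde{\delta}$ in \eqref{k3}, and of the splitting \eqref{g1b}, but it uses no ingredient beyond the computations already assembled in the run-up to the corollary.
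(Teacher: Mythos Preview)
Your proposal is correct and follows essentially the same approach as the paper: the paper simply asserts that the splitting via $\text{At}^0_X(r)$ makes ${\mathcal C}_\bullet$ equivalent to ${\mathcal C}'_\bullet$ and then invokes Theorem~\ref{thm1}. Your argument fills in the compatibility check $\mathcal{D}_B\circ\psi = \mathcal{D}\big\vert_{\text{At}^0_X(r)}$ that the paper leaves implicit, and your reasoning for it---that the projection onto $\text{ad}^n_1$ is the identity on $\mathcal{D}\bigl(\text{At}^0_X(r)\bigr)$ thanks to the inclusion recorded just before the corollary---is exactly the point.
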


\section{Monodromy of parabolic opers}

Consider a family of $n$--pointed Riemann surfaces
\begin{equation}\label{e50}
\{(X_T\, \stackrel{\varpi}{\longrightarrow}\, T),\, (\phi_1,\, \cdots ,\, \phi_n)\}
\end{equation}
as in \eqref{e31} and \eqref{phii}. Assume that this family is locally universal. The
relative canonical line bundle on $X_T$ for the projection $\varpi$ will be denoted
by $K_\varpi$. 

Fix a holomorphic line bundle $\mathbf L$
on $X_T$ such that ${\mathbf L}\otimes{\mathbf L}$ is holomorphically
isomorphic to $K_\varpi\otimes {\mathcal O}_{X_T}(-\sum_{i=1}^n \phi_i(T))$;
such a line bundle $\mathbf L$ exists locally with respect to $T$.
Fix a holomorphic isomorphism between ${\mathbf L}\otimes{\mathbf L}$ and
$K_\varpi\otimes {\mathcal O}_{X_T}(-\sum_{i=1}^n \phi_i(T))$.
Fix a function
$$
\mathbf{c}\, :\, \{1,\, \cdots,\, n\}\, \longrightarrow\,
\{t\, \in\, {\mathbb Z}\, \mid\, t\, \geq\, 2\}
$$
as in \eqref{e37}.

Fix the parabolic structure to be that of an
${\rm SL}(r,{\mathbb C})$--oper.
As in \eqref{e33},
\begin{equation}\label{e51}
\beta\,\, :\, \,{\mathcal M}^T(r)\, \longrightarrow\, T
\end{equation}
is the corresponding relative moduli space of
parabolic bundles with connection. 
Let
\begin{equation}\label{e52}
{\mathcal O}^T(r)\, \hookrightarrow\, {\mathcal M}^T(r)
\end{equation}
be the locus of parabolic ${\rm SL}(r,{\mathbb C})$--opers.

Consider the monodromy map
$$
\mathbb{M}\, :\, {\mathcal M}^T(r)\, \longrightarrow\, {\mathcal R}_X(r)
$$
in \eqref{e32}. Let
\begin{equation}\label{e53}
\mathbb{M}^0\, :\, {\mathcal O}^T(r)\, \longrightarrow\, {\mathcal R}_X(r)
\end{equation}
be the restriction of $\mathbb M$ to the subspace ${\mathcal O}^T(r)$
in \eqref{e52}.

We will prove that the holomorphic map $\mathbb{M}^0$ in \eqref{e53} is an immersion.
Our proof is modeled on the proof of Sanders that
$\mathbb{M}^0$ is an immersion under the assumption that $n\,=\, 0$
(parabolic points are absent); see \cite[Theorem 6.3]{Sa}.

Take a point $t_0\, \in\, T$. Denote the Riemann surface $\varpi^{-1}(t_0)\,=\, X_{t_0}$ by $X$.
Denote the divisor $\sum_{i=1}^n \phi_i(t_0)$ on $X$ by $S$.
We have $$T_{t_0}T \,=\, H^1(X, \, TX\otimes{\mathcal O}_X(-S));$$ recall that
$(\varpi,\, \{\phi_i\}_{i=1}^n)$ is a locally complete family. Take any
parabolic bundle with connection
$$(E_*,\, D)\,\in\, \beta^{-1}(t_0)$$ on $X$, where $\beta$ is the projection
in \eqref{e51}. From Lemma \ref{lem4} it follows that
\begin{equation}\label{e55}
T_{(E_*,D)}{\mathcal M}^T(r)\,=\, {\mathbb H}^1({\mathcal B}_\bullet),
\end{equation}
where ${\mathcal B}_\bullet$ is the complex in \eqref{e27}. Let
$$
T_\beta \, \subset\, T{\mathcal M}^T(r)
$$
be the relative tangent bundle for the projection $\beta$ in \eqref{e51}. From
Lemma \ref{lem3} it follows that $(T_\beta)_{(E,D)}\,=\, {\mathbb H}^1({\mathcal A}_\bullet)$,
where ${\mathcal A}_\bullet$ is the complex in \eqref{e25}.
Consider the homomorphism
$$
\delta\,:\, T_{t_0}T \,=\, H^1(X, \, TX\otimes{\mathcal O}_X(-S))\,
\longrightarrow\, T{\mathcal M}^T(r)
\,=\, {\mathbb H}^1({\mathcal B}_\bullet)
$$
in \eqref{e36}. Recall that the two holomorphic foliations
on ${\mathcal M}^T(r)$, one given by the isomonodromy condition and  the other given by the
projection $\beta$ in \eqref{e51}, are transversal. Therefore, from Lemma \ref{lem5}
and \eqref{e55} we know that
\begin{equation}\label{e54}
{\mathbb H}^1({\mathcal B}_\bullet)\,=\, (T_\beta)_{(E_*,D)}\oplus \delta(T_{t_0}T)
\,=\, {\mathbb H}^1({\mathcal A}_\bullet)\oplus \delta(H^1(X,\, TX\otimes{\mathcal O}_X(-S))).
\end{equation}
Let
\begin{equation}\label{e56}
\Phi\, :\, {\mathbb H}^1({\mathcal B}_\bullet)\, \longrightarrow\,
{\mathbb H}^1({\mathcal A}_\bullet)
\end{equation}
be the projection corresponding to the decomposition in \eqref{e54}. We will now
describe this homomorphism $\Phi$ explicitly.

The connection $D$ on $E_*$ gives a holomorphic decomposition
\begin{equation}\label{e57}
\text{At}(E_*)\,=\, \text{End}^P(E_*)\oplus (TX \otimes {\mathcal O}_X(-S))
\end{equation}
(see Lemma \ref{lem1}). As in \eqref{e35},
${\mathbf h}\, :\, TX\otimes{\mathcal O}_X(-S)\, \longrightarrow\, {\rm At}(E_*)$
is the homomorphism given by the decomposition in \eqref{e57}. Consider $\mathcal D$
constructed in \eqref{e21}.
Recall that
$$\mathcal{D}\circ{\mathbf h}\,=\, 0$$
in \eqref{e35}. Consequently, the decomposition in \eqref{e57} produces a homomorphism
$\mathcal P$ of complexes
\begin{equation}\label{e58}
\begin{matrix}
{\mathcal B}_\bullet &: & {\rm At}(E_*) & \stackrel{\mathcal{D}}{\longrightarrow}
& \text{End}^n(E_*)\otimes K_X \otimes{\mathcal O}_X(S)\\
\,\,\,\,\,\Big\downarrow {\mathcal P}&&\,\,\, \Big\downarrow p && \Big\Vert\\
{\mathcal A}_\bullet &: & \text{End}^P(E_*) & \stackrel{\mathcal{D}_0}{\longrightarrow}
& \text{End}^n(E_*)\otimes K_X \otimes{\mathcal O}_X(S)
\end{matrix}
\end{equation}
where $p$ is the projection given by the decomposition in \eqref{e57}. Let
\begin{equation}\label{e59}
{\mathcal P}_*\, \, :\,\,{\mathbb H}^1({\mathcal B}_\bullet)\,\longrightarrow
\, {\mathbb H}^1({\mathcal A}_\bullet)
\end{equation}
be the homomorphism of hypercohomologies corresponding to the homomorphism of
complexes $\mathcal P$ in \eqref{e58}. The homomorphism ${\mathcal P}_*$
in \eqref{e59} evidently coincides with the projection $\Phi$ in \eqref{e56}.

Now set $E_*$ to be the rank $r$ parabolic vector bundle $\text{Sym}^{r-1}(\mathcal{E}_*)$
in \eqref{e39}, and let $D$ be a connection on $\text{Sym}^{r-1}(\mathcal{E}_*)$ such that
$$\left(\text{Sym}^{r-1}(\mathcal{E}_*),\, D\right)\, \in\, {\mathcal O}^T(r)
\bigcap \beta^{-1}(t_0)$$
(see \eqref{e52} and \eqref{e51}). From Corollary \ref{cor2} we know that
$$
T_{(\text{Sym}^{r-1}(\mathcal{E}_*),D)}{\mathcal O}^T(r)\,=\, {\mathbb H}^1({\mathcal C}'_\bullet)\, ,
$$
where ${\mathcal C}'_\bullet$ is the complex in \eqref{k8}. We have the following
two homomorphisms $P$ and $Q$ of complexes
\begin{equation}\label{z1}
\begin{matrix}
{\mathcal C}'_\bullet & : & {\mathcal C}'_0\, =\, \text{At}^0_X(r) &
\stackrel{\mathcal{D}}{\longrightarrow}& {\mathcal C}'_1\, =\,\text{ad}^n_1
\left(\text{Sym}^{r-1}(\mathcal{E}_*)\right)\otimes K_X\otimes{\mathcal O}_X(S)\\
\,\,\,\, \Big\downarrow Q &&\,\,\,\,\, \,\Big\downarrow Q_0 &&\,\,\,\,\,\,\Big\downarrow Q_1\\
\widetilde{\mathcal B}_\bullet &:&\widetilde{\mathcal B}_0\,=\,
\text{At}\left(\text{Sym}^{r-1}(\mathcal{E}_*)\right)& \stackrel{\mathcal{D}}{\longrightarrow}&
\widetilde{\mathcal B}_1\,=\,
\text{End}^n\left(\text{Sym}^{r-1}(\mathcal{E}_*)\right)\otimes K_X\otimes{\mathcal O}_X(S)\\
\,\,\,\, \Big\downarrow P &&\,\,\,\,\, \,\Big\downarrow P_0 &&\,\,\,\,\,\,\Big\downarrow P_1\\
\widetilde{\mathcal A}_\bullet &:&\widetilde{\mathcal A}_0\,=\,{\rm End}^P
\left(\text{Sym}^{r-1}(\mathcal{E}_*)\right) & \stackrel{\mathcal{D}_0}{\longrightarrow}&
\widetilde{\mathcal A}_1\,=\,
\text{End}^n\left(\text{Sym}^{r-1}(\mathcal{E}_*)\right)\otimes K_X\otimes{\mathcal O}_X(S)
\end{matrix}
\end{equation}
where
\begin{itemize}
\item $\widetilde{\mathcal B}_\bullet$ is the complex in \eqref{e27}, with
$\left(\text{Sym}^{r-1}(\mathcal{E}_*),\, D\right)$ substituted in place in $(E_*,\, D)$,

\item $\widetilde{\mathcal A}_\bullet$ is the complex in \eqref{e25}, with
$\left(\text{Sym}^{r-1}(\mathcal{E}_*),\, D\right)$ substituted in place in $(E_*,\, D)$,

\item ${\mathcal C}'_\bullet$ is the complex in \eqref{k8},

\item for $i\,=\, 0,\, 1$, the homomorphism ${\mathcal C}'_i\,
\longrightarrow\, \widetilde{\mathcal B}_i$ in \eqref{z1} is the natural inclusion map, and

\item the homomorphism $P$ is the homomorphism $\mathcal P$ in \eqref{e58}
with $\left(\text{Sym}^{r-1}(\mathcal{E}_*),\, D\right)$ substituted in place in $(E_*,\, D)$.
So $P_1$ is the identity map.
\end{itemize}

Let
\begin{equation}\label{z2}
(P\circ Q)_*\,\, :\,\, {\mathbb H}^1({\mathcal C}'_\bullet)\, \longrightarrow\,
{\mathbb H}^1(\widetilde{\mathcal A}_\bullet)
\end{equation}
be the homomorphism of hypercohomologies induced by the homomorphism $P\circ Q$
in \eqref{z1}.

Since the homomorphism ${\mathcal P}_*$
in \eqref{e59} coincides with the projection $\Phi$ in \eqref{e56},
to prove that the map $\mathbb{M}^0$ in \eqref{e53} is an immersion, it suffices
to show that the homomorphism $(P\circ Q)_*$ in \eqref{z2} is injective.

Let
$$
q\, :\, \text{End}^n\left(\text{Sym}^{r-1}(\mathcal{E}_*)\right)\otimes K_X\otimes
{\mathcal O}_X(S) \, \longrightarrow\,
$$
$$
\left(\text{End}^n\left(\text{Sym}^{r-1}(\mathcal{E}_*)
\right)\otimes K_X\otimes{\mathcal O}_X(S)\right)\big/P_1Q_1\left(\text{ad}^n_1
\left(\text{Sym}^{r-1}(\mathcal{E}_*)\right)\otimes K_X\otimes{\mathcal O}_X(S)\right)
$$
$$
=\, \left(\text{End}^n\left(\text{Sym}^{r-1}(\mathcal{E}_*)
\right)\big/\text{ad}^n_1
\left(\text{Sym}^{r-1}(\mathcal{E}_*)\right)\right)\otimes K_X\otimes{\mathcal O}_X(S)
$$
be the quotient map. From the commutativity of \eqref{z1} it follows immediately
that the composition of maps
$$
q\circ {\mathcal D}_0 \circ (P_0\circ Q_0)\, :\, \text{At}^0_X(r)\, \longrightarrow\,
\left(\text{End}^n\left(\text{Sym}^{r-1}(\mathcal{E}_*)
\right)\big/\text{ad}^n_1
\left(\text{Sym}^{r-1}(\mathcal{E}_*)\right)\right)\otimes K_X\otimes{\mathcal O}_X(S)
$$
vanishes on the subbundle $\text{At}^0_X(r)\bigcap
\text{End}^P\left(\text{Sym}^{r-1}(\mathcal{E}_*)\right)\, \subset\,
\text{At}^0_X(r)$ (see \eqref{e10} for the subbundle
$\text{End}^P\left(\text{Sym}^{r-1}(\mathcal{E}_*)\right)$ of
${\rm At}(\text{Sym}^{r-1}(\mathcal{E}_*))$). Therefore,
$q\circ {\mathcal D}_0 \circ (P_0\circ Q_0)$ produces a homomorphism
$$
\mathcal{S}\, :\, TX\otimes{\mathcal O}_X(-S) \, \longrightarrow\,
\left(\text{End}^n\left(\text{Sym}^{r-1}(\mathcal{E}_*)
\right)\big/\text{ad}^n_1
\left(\text{Sym}^{r-1}(\mathcal{E}_*)\right)\right)\otimes K_X\otimes{\mathcal O}_X(S)\, .
$$
This homomorphism $\mathcal{S}$ 
coincides with the second fundamental form of the reduction of structure group
$\widetilde{\mathbb P}(B)_*$ in \eqref{ec5} for the
connection on $\widetilde{\mathbb P}_*$ given by $D$. We know
that this second fundamental form is everywhere nonzero, because
$D$ is a parabolic $\text{SL}(r,{\mathbb C})$--oper.
Consequently, $\mathcal{S}$ is everywhere nonzero. This implies that the
homomorphism $P_0\circ Q_0$ in \eqref{z1} is injective.

Since $P_0\circ Q_0$ in \eqref{z1} is injective, it follows that
the kernel of the homomorphism $(P\circ Q)_*$ in \eqref{z2} is the quotient of
a subspace of $H^0\left(X,\, {\rm End}^P
\left(\text{Sym}^{r-1}(\mathcal{E}_*)\right)\big/P_0Q_0\left(\text{At}^0_X(r)\right)\right)$.
More precisely, let
$$
\mathbf{V}\, \subset\,H^0\left(X,\, {\rm End}^P
\left(\text{Sym}^{r-1}(\mathcal{E}_*)\right)\big/P_0Q_0(\text{At}^0_X(r))\right)
$$
be the subspace consisting of all sections $s$ such that ${\mathcal D}_0(s)\,=\, 0$;
note that from the commutativity of the diagram in \eqref{z1} it follows that
${\mathcal D}_0$ produces a homomorphism
$$
H^0\left(X,\, {\rm End}^P
\left(\text{Sym}^{r-1}(\mathcal{E}_*)\right)\big/P_0Q_0(\text{At}^0_X(r))\right)
\, \longrightarrow
$$
$$
H^0\left(X,\, \left(\text{End}^n\left(\text{Sym}^{r-1}(\mathcal{E}_*)
\right)\big/\text{ad}^n_1 \left(\text{Sym}^{r-1}
(\mathcal{E}_*)\right)\right)\otimes K_X\otimes{\mathcal O}_X(S)\right)\, .
$$
Let
$$
\mathbf{W}\, \subset\,\mathbf{V}
$$
be the subspace consisting of all sections $s$ such that there is a
section
$$
\widetilde{s}\, \in\, H^0\left(X,\, {\rm End}^P
\left(\text{Sym}^{r-1}(\mathcal{E}_*)\right)\right)
$$
satisfying the following two conditions:
\begin{itemize}
\item ${\mathcal D}_0(\widetilde{s})\,=\, 0$, where ${\mathcal D}_0$ is the homomorphism in
\eqref{z1}, and

\item $\widetilde{s}$ projects to $s$ under the natural map
$$
H^0\left(X,\, {\rm End}^P
\left(\text{Sym}^{r-1}(\mathcal{E}_*)\right)\right)\, \longrightarrow\,
H^0\left(X,\, {\rm End}^P
\left(\text{Sym}^{r-1}(\mathcal{E}_*)\right)\big/P_0Q_0(\text{At}^0_X(r))\right)\, .
$$
\end{itemize}
Then we have
\begin{equation}\label{z4}
{\rm kernel}((P\circ Q)_*)\, \,=\,\, \mathbf{V}/\mathbf{W}\, ,
\end{equation}
where $(P\circ Q)_*$ is the homomorphism in \eqref{z2}.

Now 
$$
{\rm End}^P
\left(\text{Sym}^{r-1}(\mathcal{E}_*)\right)\big/P_0Q_0(\text{At}^0_X(r))\,=\,
{\mathcal O}_X\oplus {\mathcal W}\, ,
$$
where ${\mathcal W}$ admits a filtration of holomorphic subbundles such that
every successive quotient is of the form $(TX\otimes {\mathcal O}_X(-S))^{\otimes m}$,
$m\, \geq\, 1$. From Assumption \ref{asm1} it follows that
$$
H^0(X, (TX\otimes {\mathcal O}_X(-S))^{\otimes m})\,=\, 0
$$
for all $m\, \geq\, 1$. Hence
$$
H^0\left(X,\, {\rm End}^P
\left(\text{Sym}^{r-1}(\mathcal{E}_*)\right)\big/P_0Q_0(\text{At}^0_X(r))\right)
\,=\, H^0\left(X,\, {\mathcal O}_X\right)\, .
$$
But ${\mathcal O}_X\, \subset\, {\rm End}^P\left(\text{Sym}^{r-1}(\mathcal{E}_*)\right)$,
and ${\mathcal D}_0(H^0(X,\, {\mathcal O}_X))\,=\, 0$. Consequently, we have
$$H^0\left(X,\, {\mathcal O}_X\right)\, \subset\, \mathbf{W},$$ where $\mathbf{W}$
is the subspace in \eqref{z4}. Therefore, from \eqref{z4} it follows that
\begin{equation}\label{z5}
{\rm kernel}((P\circ Q)_*)\, =\, 0\, .
\end{equation}
In other words, the homomorphism $(P\circ Q)_*$ is injective.

As noted above, the map $\mathbb{M}^0$ in \eqref{e53} is an immersion if
the homomorphism $(P\circ Q)_*$ in \eqref{z2} is injective. Therefore, we have
proved the following:

\begin{theorem}\label{thm2}
The map $\mathbb{M}^0$ in \eqref{e53} is an immersion.
\end{theorem}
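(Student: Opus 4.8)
The plan is to reduce the immersivity of $\mathbb{M}^0$ at each point to the injectivity of one explicit hypercohomology map, and then to kill that kernel using the oper condition together with Assumption \ref{asm1}. Fix $t_0\in T$ and a point $(E_*,D)\in\mathcal{O}^T(r)\cap\beta^{-1}(t_0)$; thus $E_*=\mathrm{Sym}^{r-1}(\mathcal{E}_*)$. By Corollary \ref{cor2} one has $T_{(E_*,D)}\mathcal{O}^T(r)=\mathbb{H}^1(\mathcal{C}'_\bullet)$, and by Lemma \ref{lem4} together with local completeness of the family $T_{(E_*,D)}\mathcal{M}^T(r)=\mathbb{H}^1(\widetilde{\mathcal{B}}_\bullet)$, with $\mathbb{H}^1(\widetilde{\mathcal{A}}_\bullet)$ the tangent space at $(E_*,D)$ to the fibre $\beta^{-1}(t_0)$ (Lemma \ref{lem3}). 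The first structural input I would invoke is that $\mathbb{M}$ is constant along the leaves of the isomonodromy foliation, so $d\mathbb{M}$ annihilates the isomonodromy direction, which by Lemma \ref{lem5} is $\delta\bigl(H^1(X,TX\otimes\mathcal{O}_X(-S))\bigr)$ for $\delta$ as in \eqref{e36}. The second input is that the restriction of $\mathbb{M}$ to the fibre $\beta^{-1}(t_0)$ is an immersion by the Riemann--Hilbert correspondence, using the semisimplicity of the local monodromy (Lemma \ref{os}); on tangent spaces this says $d\mathbb{M}$ is injective on $\mathbb{H}^1(\widetilde{\mathcal{A}}_\bullet)$. Since the isomonodromy and the $\beta$-fibre distributions are transversal and complementary, these two facts give $\ker d\mathbb{M}=\ker\Phi$, where $\Phi$ is the projection in \eqref{e56}. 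Consequently $\ker d\mathbb{M}^0=\ker(\Phi\circ\iota)$, where $\iota\colon\mathbb{H}^1(\mathcal{C}'_\bullet)\hookrightarrow\mathbb{H}^1(\widetilde{\mathcal{B}}_\bullet)$ is the injection induced by the inclusion $\mathcal{O}^T(r)\subset\mathcal{M}^T(r)$, and it suffices to prove this composite vanishes.

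I would then make $\Phi\circ\iota$ explicit and recognize it as $(P\circ Q)_*$ of \eqref{z2}. The connection $D$ splits the Atiyah sequence of $\mathrm{Sym}^{r-1}(\mathcal{E}_*)$ (Lemma \ref{lem1}), giving $\mathrm{At}(\mathrm{Sym}^{r-1}(\mathcal{E}_*))=\mathrm{End}^P(\mathrm{Sym}^{r-1}(\mathcal{E}_*))\oplus(TX\otimes\mathcal{O}_X(-S))$, and flatness of $D$ (that is, $\mathcal{D}\circ\mathbf{h}=0$) makes the associated projection a morphism of complexes $P\colon\widetilde{\mathcal{B}}_\bullet\to\widetilde{\mathcal{A}}_\bullet$ (the morphism $\mathcal{P}$ of \eqref{e58}) whose induced map on $\mathbb{H}^1$ is exactly $\Phi$. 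The injection $\iota$ is induced by the termwise-injective morphism of complexes $Q\colon\mathcal{C}'_\bullet\hookrightarrow\widetilde{\mathcal{B}}_\bullet$ in \eqref{z1} --- the identity on the degree-one term and the natural inclusion $\mathrm{At}^0_X(r)\hookrightarrow\mathrm{At}(\mathrm{Sym}^{r-1}(\mathcal{E}_*))$ in degree zero --- so $\Phi\circ\iota=(P\circ Q)_*$, and the goal becomes $\ker(P\circ Q)_*=0$.

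To establish this I would argue in two steps. First, the degree-zero arrow $P_0\circ Q_0\colon\mathrm{At}^0_X(r)\to\mathrm{End}^P(\mathrm{Sym}^{r-1}(\mathcal{E}_*))$ is a vector-bundle injection. Its kernel would consist of horizontal lifts $\mathbf{h}(v)$ of vector fields that also lie in $\mathrm{At}^0_X(r)$, that is, that preserve the oper flag $\{\mathcal{F}^{(j)}\}$ of \eqref{e40}; but the oper condition says precisely that the connection shifts this flag by exactly one step through an everywhere-nondegenerate map, equivalently that the second fundamental form of the Borel reduction $\widetilde{\mathbb{P}}(B)_*$ of \eqref{ec5} with respect to $D$ is nowhere vanishing, so $\mathbf{h}(v)\in\mathrm{At}^0_X(r)$ forces $v=0$. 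Second, since $P\circ Q$ is then a termwise injection of two-term complexes, it fits in a short exact sequence of complexes whose long exact hypercohomology sequence identifies $\ker(P\circ Q)_*$ with a subquotient of $H^0\bigl(X,\,\mathrm{End}^P(\mathrm{Sym}^{r-1}(\mathcal{E}_*))/(P_0\circ Q_0)(\mathrm{At}^0_X(r))\bigr)$. I would then identify this quotient sheaf, using the explicit description of the flag $\{\mathcal{F}^{(j)}\}$ and of its determinant (Remark \ref{rem3}), as $\mathcal{O}_X\oplus\mathcal{W}$ with $\mathcal{W}$ carrying a filtration whose successive quotients are $(TX\otimes\mathcal{O}_X(-S))^{\otimes m}$, $m\geq 1$; by Assumption \ref{asm1} the line bundle $TX\otimes\mathcal{O}_X(-S)$ has negative degree, so $H^0(X,\mathcal{W})=0$ and the above $H^0$ reduces to $H^0(X,\mathcal{O}_X)=\mathbb{C}$. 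Finally, $\mathcal{O}_X\subset\mathrm{End}^P(\mathrm{Sym}^{r-1}(\mathcal{E}_*))$ (the scalars) is annihilated by $\mathcal{D}_0$ and lifts to a global section of $\mathrm{End}^P(\mathrm{Sym}^{r-1}(\mathcal{E}_*))$, so this remaining $\mathbb{C}$ contributes nothing to the subquotient; hence $\ker(P\circ Q)_*=0$, and therefore $\mathbb{M}^0$ is an immersion.

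The step I expect to be the main obstacle is the bookkeeping in the second half: computing the quotient $\mathrm{End}^P(\mathrm{Sym}^{r-1}(\mathcal{E}_*))/(P_0\circ Q_0)(\mathrm{At}^0_X(r))$ and verifying the claimed filtration by positive tensor powers of $TX\otimes\mathcal{O}_X(-S)$, which forces one to track the parabolic weights of $\mathrm{Sym}^{r-1}(\mathcal{E}_*)$ and the flag $\{\mathcal{F}^{(j)}\}$ through the definitions of $\mathrm{At}'_X(r)$, $\mathrm{At}^0_X(r)$ and $\mathrm{End}^P$. If this becomes unwieldy, an alternative is to pull everything back along the ramified Galois cover $\varphi\colon Y\to X$ of \eqref{gc2}: there $D$ becomes a $\Gamma_\varphi$-invariant $\mathrm{PSL}(r,\mathbb{C})$-oper, $\mathcal{C}'_\bullet$ becomes the $\Gamma_\varphi$-invariant part of Sanders's oper complex on $Y$, and the statement follows from Sanders's immersion theorem \cite[Theorem 6.3]{Sa} by passing to $\Gamma_\varphi$-invariants, exactly as in the proof of Theorem \ref{thm1}, the vanishing of the relevant $H^0$ on $X$ corresponding to the vanishing of equivariant sections on $Y$.
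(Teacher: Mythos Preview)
Your proposal is correct and follows essentially the same route as the paper's own argument: reduce immersivity of $\mathbb{M}^0$ to injectivity of $(P\circ Q)_*$ via the isomonodromy decomposition (Lemma~\ref{lem5}), prove $P_0\circ Q_0$ is fiberwise injective using the nondegenerate second fundamental form of the oper, then identify the quotient $\mathrm{End}^P(\mathrm{Sym}^{r-1}(\mathcal{E}_*))/(P_0Q_0)(\mathrm{At}^0_X(r))$ as $\mathcal{O}_X\oplus\mathcal{W}$ with $\mathcal{W}$ filtered by positive powers of $TX\otimes\mathcal{O}_X(-S)$, and kill the resulting $H^0$ via Assumption~\ref{asm1} and the fact that the scalar $\mathcal{O}_X$ lifts to a $\mathcal{D}_0$-flat global endomorphism. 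One small slip: at the end of your first paragraph you write ``it suffices to prove this composite vanishes'' when you mean its \emph{kernel} vanishes; you correct this immediately in the next paragraph, so the argument is unaffected.
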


\newpage

\appendix

\section{Parabolic opers}\label{APO}

The purpose of this appendix is to recall some results by K. Yokogawa \cite{Y} on Hom-sheaves, tensor products and extension classes of parabolic
bundles and to give an alternative definition of a parabolic ${\rm SL}(r)$-oper \cite{BDP} which is conceptually closer to the definition of an ordinary
${\rm SL}(r)$-oper.

\subsection{Correspondence:  flags and $\mathbb{R}$-filtered sheaves} \label{corrflagfilteredsheaf}

We first recall the correspondence between a parabolic vector bundle as defined in section 2.1 and an $\mathbb{R}$-filtered sheaf 
$\{ E_t \}_{t \in \mathbb{R}}$ as introduced and studied in \cite{MY}, \cite{Y}, \cite{BY}. Using the notation of 
section 2.1 we define for $t \in [0,1]$ the vector bundle $E_t$ by the following equalities 

$$
\begin{matrix}
E^i_t & = & E  & \, \ {\rm for}\,\  0 \leq t \leq \alpha_{i,1} \\
E^{i}_t & = & \mathrm{ker}(E \rightarrow E_{x_i} / E_{i,j}) & \, \ {\rm for}\, \ \alpha_{i,j-1} < t \leq \alpha_{i,j} \\
 E^i_t & = & E (-x_i) & \,\ {\rm for} \,\ \alpha_{i,l_i} < t \leq 1 \\
 E_t & = & \bigcap_{i= 1}^n E^i_t &  
\end{matrix}
$$

We extend to $\mathbb{R}$ by the formula $E_{t+1} = E_t(-S)$. We also denote this $\mathbb{R}$-filtered sheaf by $E_*$. Note that 
$E_t \subset E_{t'}$ for any $t \geq t'$.

We recall that the family $E_*$ is left-continuous, meaning that for any $t \in \mathbb{R}$
$$ \lim_{s \to t \atop{s < t}} E_s = E_{t}. $$

The family $E_*$ is not right-continuous and we will denote for any $t \in \mathbb{R}$
$$ E_{t+} \,:=\,\lim_{s \to t \atop{s > t}} E_s.$$
Then $E_{t+}$ is a subsheaf of $E_t$ and the quotient $E_t/ E_{t+}$ is a torsion-sheaf supported at
the parabolic divisor $S$.

\subsection{Special structure and shifts}

Every vector bundle $E$ can be considered as a parabolic vector bundle $E_*$ with the special structure, defined either
by the properties $l_i= 1, \alpha_{i,1} = 0$ for $1 \leq i \leq n$, or equivalently by the equalities
$$ E_t \,:=\, E \qquad \text{for} \ t \,\in\, ]-1,\,0].$$

Given a parabolic vector bundle $E_*$ and an n-tuple
$\underline{\beta}\, =\, (\beta_1, \,\ldots , \,\beta_n )\,\in\, \mathbb{R}^n$ we define the shift $E[\underline{\beta}]_*$ by the
equalities
$$ E[\underline{\beta}]_t \,=\, \bigcap_{i=1}^n E^i_{t + \beta_i} \qquad \text{for} \ t \,\in\, \mathbb{R}.$$
Also, in order to simplify notation, we define for $\beta \in \mathbb{R}$ the shift of $E[\beta]_*$ as
$E[\beta]_* = E[\underline{\beta}]_*$ with $\beta_i = \beta$ for every $i$.

\subsection{Tensor products, Hom-sheaves and duals}

Given two parabolic vector bundles $E_*$ and $E'_*$ we define their parabolic tensor product (see e.g. 
\cite{Y} section 3) by the formula

$$ (E_* \otimes E'_*)_t := \sum_{s \in \mathbb{R}} E_s \otimes E'_{t-s} \subset E_0\otimes E'_0(*S).$$

Here $E_0\otimes E'_0(*S)$ denotes the (non-coherent) sheaf of rational sections of $E_0\otimes E'_0$ admitting 
arbitrary poles at the parabolic divisor $S$. We note that it is enough to consider the sum for $s$ running over a
subinterval of $\mathbb{R}$ of length $1$ (because of the invariance of the tensor product $E_s \otimes E'_{t-s}$ under the shift 
$s \mapsto s+1$) and that only a finite number of subsheaves $E_s \otimes E'_{t-s}$ occur.

Before defining the parabolic Hom-sheaf of two parabolic vector bundles $E_*$ and $E'_*$, we define the sheaf
$Hom(E_*, E'_*)$ as the subsheaf of $Hom(E_0,\, E'_0)$ consisting of parabolic homomorphisms, i.e., homomorphisms
$f \,:\, E_0 \,\longrightarrow\, E'_0$ satisfying 
$$f(E_t) \,\subset\, E'_t$$ 
for any $t \,\in\, [0,\,1[$ and thus for any $t \,\in\, \mathbb{R}$. Note that if $E_*$ and $E'_*$ are vector bundles, then $Hom(E_*, \,E'_*)$
is also a vector bundle. Now, we define the parabolic Hom-sheaf $Hom(E_*,\, E'_*)_*$ by the formula
$$ Hom(E_*, \,E'_*)_t \,:=\, Hom(E_*,\,E'[t]_*)$$
for any $t \,\in\, \mathbb{R}$. 

The parabolic dual $E^\vee_*$ of a parabolic vector bundle $E_*$ is by definition
$$ E^\vee_* \,:=\, Hom(E_*,\, \mathcal{O}_*)_*, $$
where $\mathcal{O}_*$ denotes the trivial bundle with the special structure.

The above definitions of parabolic tensor products, Hom-sheaves and duals extend the standard operations
on vector bundles, when considering a vector bundle as a parabolic vector bundle with its special structure. Also,
the following relations are easy to check :
$$ E[\underline{\beta}]_* \otimes E'[\underline{\beta'}]_* = E \otimes E'[\underline{\beta} + \underline{\beta'}]_* $$
$$ E[\underline{\beta}]_*^\vee = E^\vee[ - \underline{\beta}]_* $$
$$ E^\vee_* \otimes E'_* = Hom(E_*, E'_*)_* $$

\subsection{Cohomology of a parabolic bundle}

Given a parabolic vector bundle $E_*$ over the curve $X$ we define the cohomology of $E_*$ as the cohomology of the vector 
bundle $E_0$
$$ H^i(X, E_*) = H^i(X, E_0).$$

\subsection{Parabolic subbundles and parabolic degree}

We say that $E'_*$ is a parabolic subbundle of $E_*$ if there is an injective parabolic homomorphism 
$$E'_* \hookrightarrow E_*$$
with torsion-free cokernel, or equivalently, for any $t \in \mathbb{R}$, the subsheaf $E'_t$ is a subbundle of $E_t$. 

The parabolic degree of a parabolic bundle $E_*$ is defined as 
$$ \mathrm{pardeg}(E_*) = \int_0^1 \deg(E_t) dt + n \mathrm{rk}(E_*),$$
where $n$ is the number of parabolic points. We have the following formulae :
$$ \mathrm{pardeg}(E_* \otimes E'_*) = \mathrm{rk}(E'_*)\mathrm{pardeg}(E_*) + \mathrm{rk}(E_*)\mathrm{pardeg}(E'_*),$$
$$ \mathrm{pardeg}(E[\underline{\beta}]_*) = \mathrm{pardeg}(E_*) - \mathrm{rk}(E_*) \sum_{i=1}^n \beta_i. $$

\subsection{Canonical injections and quasi-isomorphisms}

Given a parabolic line bundle $L_*$ and a vector $\underline{\gamma} \in \mathbb{R}^n$ with $\gamma_i \geq 0$ for all
$i$, we have a canonical parabolic 
injection
$$ \iota:  L_* \longrightarrow L[-\underline{\gamma}]_* $$
induced by the natural inclusions  $L_{t} \subset L_{t - \gamma_i}$.

\begin{definition} \label{quasiiso}
We say that a parabolic homomorphism between two parabolic line bundles 
$$\varphi : L_* \longrightarrow M_* $$
is a quasi-isomorphism, if there exists a vector $\underline{\gamma} \in \mathbb{R}^n$ with $0 \leq \gamma_i < 1$
and a parabolic isomorphism $M_* \cong L[-\underline{\gamma}]_*$ such that via this 
isomorphism $\varphi$ identifies with the canonical injection $\iota$. In that case we say that 
$\varphi$ is a quasi-isomorphism of weight $\underline{\gamma} \in \mathbb{R}^n$.
\end{definition}

\subsection{Extensions of parabolic bundles}\label{extparbun}

Given two parabolic vector bundles $E_*$ and $E'_*$ we say that the parabolic vector bundle $F_*$ is an extension of
$E_*$ by $E'_*$ if there exists a short exact sequence of parabolic homomorphisms
$$ 0 \longrightarrow E'_* \longrightarrow F_* \longrightarrow E_* \longrightarrow 0. $$
By \cite{Y} Lemma 1.4 and Lemma 3.6 the isomorphism classes of extensions of $E_*$ by $E'_*$ are in one-to-one 
correspondence with the cohomology
space 
$\mathrm{Ext}^1(E_*, E'_*) = H^1(X, Hom(E_*, E'_*))$.

\subsection{Connections on parabolic bundles} Given a parabolic vector bundle $E_*$ with parabolic divisor
$S \subset X$ we define a connection $\nabla_*$ on $E_*$ as a $\mathbb{C}$-linear homomorphism
between the parabolic bundles $E_*$ and $E_* \otimes K[-1]_*$
\begin{eqnarray} \label{defconpar}
\nabla_* : E_* \longrightarrow E_* \otimes  K[-1]_*
\end{eqnarray}
such that for every $t \in \mathbb{R}$ the map $\nabla_t: E_t \rightarrow (EK[-1])_t = E_t K(S)$ is a 
logarithmic connection with poles at the parabolic divisor $S$ and for any $t \leq t'$ we have
a commutative diagram
  \[
    \xymatrix{
     E_{t'} \ar@{^(->}[d]\ar@{->}[r]& E_{t'}K(S) \ar@{^(->}[d]\\
    E_t \ar@{->}[r]& E_{t}K(S) }
    \]
where the vertical maps are the natural inclusions.

On the trivial parabolic bundle $\mathcal{O}_*$ there is a natural connection given by the de Rham 
differentiation and which is denoted by $d_*$
$$ d_* : \mathcal{O}_* \rightarrow K[-1]_* $$
and, fixing an integer $n$, is given for $t \in ]n-1, n]$ by differentiation of regular functions having zeros or poles of 
order $n$ at $S$
$$d_t : \mathcal{O}_t = \mathcal{O}(-nS) \rightarrow K[-1]_t = K(-(n-1)S).$$

We now describe the properties of $\nabla_*$ in terms of the parabolic structure given by the flags at the parabolic 
divisor.

\begin{lemma}
Consider a connection $\nabla_*$ on $E_*$ as defined in (\ref{defconpar}). Then the logarithmic connection $\nabla_0$ on $E_0$ obtained
by putting $t=0$ satisfies 
\begin{eqnarray} \label{flagpreserving}
\mathrm{Res}(\nabla_0, x_i) (E_{i,j}) \subset E_{i,j} 
\end{eqnarray}
for any $i = 1, \ldots, n$ and any $j = 1, \ldots , l_i$.
Conversely, any logarithmic connection $\nabla_0$ on $E_0$ satisfying (\ref{flagpreserving}) gives rise to a connection $\nabla_*$ on $E_*$.
\end{lemma}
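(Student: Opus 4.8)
The plan is to reduce the statement to a local assertion at each parabolic point $x_i$ and then to a one-line linear-algebra computation in a frame adapted to the flag $E_{x_i}\supset E_{i,2}\supset\cdots$. First I would record that, via the identification $(E_*\otimes K[-1]_*)_t=E_tK(S)$ recalled just above the lemma, giving a connection $\nabla_*$ on $E_*$ in the sense of \eqref{defconpar} amounts to giving a single logarithmic connection $\nabla_0$ on the underlying bundle $E_0=E$, with poles on $S$, subject to the requirement that $\nabla_0(E_t)\subset E_tK(S)$ for every $t\in\mathbb{R}$; the commuting squares for $t\le t'$ are then automatic, since each $\nabla_t$ is simply the restriction of $\nabla_0$ to $E_t$. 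Because $E_t=E$ over $X\setminus S$, this requirement is vacuous there and needs to be tested only in a neighbourhood of each $x_i$; and by the periodicity $E_{t+1}=E_t(-S)$ it is enough to test it for $t\in[0,1]$. Near a fixed $x_i$ the sheaf $E_t$ runs, as $t$ increases from $0$ to $1$, precisely through the finite list $E=\mathcal{E}_{i,1}$, $\mathcal{E}_{i,2}$, $\ldots$, $\mathcal{E}_{i,l_i}$, $E(-x_i)$, where $\mathcal{E}_{i,j}=\ker(E\to E_{x_i}/E_{i,j})$ as in \eqref{e3}.

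Next I would fix a holomorphic coordinate $z$ at $x_i$ and a local holomorphic frame $v_1,\ldots,v_r$ of $E$ such that the fibre of $E_{i,j}$ is spanned by $\{v_k(x_i):k>r-\dim E_{i,j}\}$; then $\mathcal{E}_{i,j}$ has the local frame $zv_1,\ldots,zv_{r-\dim E_{i,j}},v_{r-\dim E_{i,j}+1},\ldots,v_r$, and $\mathcal{E}_{i,j}K(S)$ is the span of these same vectors multiplied by $\tfrac{dz}{z}$. Writing $\nabla_0=d+A(z)\tfrac{dz}{z}$ with $A$ holomorphic, so that $\mathrm{Res}(\nabla_0,x_i)=A(0)=:R$, a direct check gives the following: applying $\nabla_0$ to a frame vector $zv_k$ with $k\le r-\dim E_{i,j}$ produces $\sum_m(\delta_{mk}+A_{mk}(z))v_m\,dz$, which lands automatically in $\mathcal{E}_{i,j}K(S)$, while applying $\nabla_0$ to $v_k$ with $k>r-\dim E_{i,j}$ produces $\tfrac1z\sum_m A_{mk}(z)v_m\,dz$, and this lies in $\mathcal{E}_{i,j}K(S)$ if and only if $A_{mk}(0)=0$ for all $m\le r-\dim E_{i,j}$ — that is, if and only if $R$ carries the fibre of $E_{i,j}$ into itself. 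Thus $\nabla_0(\mathcal{E}_{i,j})\subset\mathcal{E}_{i,j}K(S)$ is equivalent to $\mathrm{Res}(\nabla_0,x_i)(E_{i,j})\subset E_{i,j}$, while the two remaining conditions $\nabla_0(E)\subset EK(S)$ and $\nabla_0(E(-x_i))\subset E(-x_i)K(S)$ hold automatically. Granting this equivalence, the forward implication follows by specializing the requirement $\nabla_0(E_t)\subset E_tK(S)$ to $t=\alpha_{i,j}$, and the converse follows by setting $\nabla_t:=\nabla_0|_{E_t}$ and reading the same equivalence in the opposite direction.

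I expect the only delicate point to be the bookkeeping rather than any genuine difficulty: one must handle carefully the identification $(E_*\otimes K[-1]_*)_t=E_tK(S)$ and the fact that the subsheaves $\mathcal{E}_{i,j}$ interpolate the $\mathbb{R}$-filtration between $E$ and $E(-x_i)$, so that the infinitely many conditions $\nabla_0(E_t)\subset E_tK(S)$, $t\in\mathbb{R}$, collapse to exactly the finitely many flag conditions \eqref{flagpreserving}, $1\le j\le l_i$, $1\le i\le n$ — and to nothing more. Once the adapted local model is in place, the verification is a single line, and, in contrast to the results in the body of the paper, no global hypothesis such as Assumption \ref{asm1} enters.
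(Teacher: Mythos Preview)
Your proposal is correct. The paper does not actually supply a proof of this lemma --- it simply states that ``the proof of this lemma is standard and therefore left to the reader'' --- so there is no argument to compare against. Your approach, reducing to a local computation in a frame adapted to the quasiparabolic flag and identifying the condition $\nabla_0(\mathcal{E}_{i,j})\subset\mathcal{E}_{i,j}K(S)$ with the block-triangularity of the residue matrix $A(0)$, is precisely the standard verification the authors have in mind; your reduction of the infinitely many conditions indexed by $t\in\mathbb{R}$ to the finitely many sheaves $\mathcal{E}_{i,j}$ via left-continuity and periodicity is also handled cleanly.
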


The proof of this lemma is standard and therefore left to the reader.

\subsection{Parabolic connections on parabolic bundles}

Consider a connection $\nabla_*$ on a parabolic bundle $E_*$ as defined in (\ref{defconpar}). Then for any $t \in \mathbb{R}$
we can consider the residue at $x_i \in S$ of the logarithmic connection $\nabla_t$
$$ \mathrm{Res}(\nabla_t, x_i) \in \mathrm{End}_{\mathbb{C}}((E_{t})_{x_i}). $$
Since $\nabla_{t}$ preserves all subsheaves $E_{t'}$ for $t' \geq t$ we obtain by passing to the quotient 
$(E_{t}/E_{t+})_{x_i}$ a linear map
$$ \overline{\mathrm{Res}}(\nabla_t, x_i) \in \mathrm{End}_{\mathbb{C}}((E_{t}/E_{t+})_{x_i}).$$
and therefore an endomorphism, which we simply denote by $\overline{\mathrm{Res}}(\nabla_t)$, of the 
torsion sheaf $E_t/E_{t+}$.

With this notation, we can now define a parabolic connection on a parabolic bundle.

\begin{definition}
Let $E_*$ be a parabolic bundle. We say that a connection $\nabla_*$ on $E_*$
is a {\em parabolic} connection if for any $t \in \mathbb{R}$
$$ \overline{\mathrm{Res}}(\nabla_t) = t \mathrm{Id}.$$
\end{definition}

We leave it as an exercise to the reader to check that this definition is equivalent via the correspondence of section \ref{corrflagfilteredsheaf}
to the definition given in section 2.2, i.e. for any parabolic point $x_i \in S$ the residue of the logarithmic connection $\nabla_0$ acts as
$\alpha_{i,j} \mathrm{Id}$ on the quotient space $E_{i,j}/ E_{i,j+1}$.

We also mention the following useful facts, whose proofs are standard.

Let $(E_*, \nabla_*)$ and $(E'_*, \nabla'_*)$ be parabolic vector bundles equipped with parabolic connections. Then
\begin{itemize}
    \item the parabolic tensor product $E_* \otimes E'_*$ is naturally equipped with the tensor product connection
    $(\nabla \otimes \nabla')_*$, which is also parabolic.
    \item the parabolic symmetric power $\mathrm{Sym}^m E_*$ is naturally equipped with the symmetric power connection
    $(\mathrm{Sym}^m \nabla)_*$, which is also parabolic.
    \item the de Rham differentiation $d_*$ on the trivial parabolic bundle $\mathcal{O}_*$ is a parabolic connection.
\end{itemize}

\begin{remark}
Note that, when considering parabolic bundles from the ``flag"-point of view, the relation between the parabolic weights of $E_*$ and those 
of its symmetric powers $\mathrm{Sym}^m E_*$ is quite complicated (as one needs to take fractional parts and reorder them in order to obtain 
an increasing sequence of parabolic weights in the interval $[0,1[$). Therefore the ``$\mathbb{R}$-filtered sheaf"-point of view is more 
adapted when considering symmetric powers, as we will need to do in the sequel.
\end{remark}

With this notation we can reformulate the following existence theorem.

\begin{theorem}[{\cite{BL}}]\label{exparconn}
The parabolic bundle $E_*$ admits a parabolic connection $\nabla_*$ if and only if any direct summand of $E_*$ has
parabolic degree equal to $0$.
\end{theorem}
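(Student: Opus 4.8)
The plan is to deduce the statement from the existence theorem of Biswas and Laza \cite[p.~594, Theorem 1.1]{BL} by means of the dictionary of Section~\ref{corrflagfilteredsheaf} between parabolic bundles described by flags and $\mathbb{R}$-filtered sheaves. First I would record that a parabolic connection $\nabla_*$ on $E_*$ in the sense of the Definition above is the same datum as a connection on the parabolic bundle $E_*$ in the flag sense of Section~2.2: passing to $t=0$ sends $\nabla_*$ to a logarithmic connection $\nabla_0$ on $E_0$ whose residue at each $x_i$ preserves the flag $\{E_{i,j}\}$ and induces multiplication by $\alpha_{i,j}$ on $E_{i,j}/E_{i,j+1}$, and conversely such a $\nabla_0$ extends uniquely to an $\mathbb{R}$-filtered $\nabla_*$; this is precisely the exercise left to the reader just after the Definition. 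I would then note that a direct-sum decomposition $E_*=E'_*\oplus E''_*$ of $\mathbb{R}$-filtered sheaves is nothing but $E_t=E'_t\oplus E''_t$ for every $t$, which under the dictionary is exactly a direct-sum decomposition of flag-parabolic bundles, and that the parabolic degree is the same number computed on either side. Granting these identifications, the asserted equivalence is literally the content of \cite[Theorem 1.1]{BL}.

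For a self-contained argument one would reduce to the indecomposable case. In the \emph{if} direction, write $E_*=\bigoplus_k E^{(k)}_*$ with each $E^{(k)}_*$ indecomposable; each $E^{(k)}_*$ is a direct summand, hence of parabolic degree $0$ by hypothesis, and a finite direct sum of parabolic connections is again a parabolic connection, so it suffices to equip each indecomposable summand with a parabolic connection. In the \emph{only if} direction, given a parabolic connection $\nabla_*$ on $E_*$ and a direct summand $E'_*\subset E_*$ with complement $E''_*$, one forms $\nabla'_t:=p'_t\circ\nabla_t\circ\iota'_t$, using the inclusion $\iota'_t\colon E'_t\hookrightarrow E_t$ and the projection $p'_t\colon E_t\to E'_t$; this inherits the Leibniz rule, preserves the sub-filtration, and, since $\overline{\mathrm{Res}}(\nabla_t)=t\,\mathrm{Id}$ is a scalar on $E_t/E_{t+}=E'_t/E'_{t+}\oplus E''_t/E''_{t+}$, its compression to the summand is again $t\,\mathrm{Id}$. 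Thus $\nabla'_*$ is a parabolic connection on $E'_*$, and what remains is to show that an indecomposable parabolic bundle of parabolic degree $0$ admits a parabolic connection, and that conversely the parabolic degree vanishes whenever a parabolic connection exists.

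Both remaining points would be handled by passing to a ramified Galois cover $\varphi\colon Y\to X$ as in \eqref{gc}, adapted to the denominators of the weights of $E_*$, under which $E_*$ corresponds to a $\Gamma_\varphi$-equivariant holomorphic vector bundle $V$ on $Y$ and parabolic connections on $E_*$ correspond to $\Gamma_\varphi$-invariant holomorphic connections on $V$. Under this translation $\mathrm{pardeg}(E_*)=0$ becomes the statement that $\deg V=0$ on each $\Gamma_\varphi$-indecomposable summand of $V$; the existence of a $\Gamma_\varphi$-invariant holomorphic connection on such a $V$ is the equivariant version of the Atiyah--Weil criterion, and conversely, if a connection exists, the residue theorem $\deg E_0+\sum_i\mathrm{tr}\,\mathrm{Res}(\nabla_0,x_i)=0$ together with $\mathrm{tr}\,\mathrm{Res}(\nabla_0,x_i)=\sum_j\alpha_{i,j}\dim(E_{i,j}/E_{i,j+1})$ forces $\mathrm{pardeg}(E_*)=0$. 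The only genuinely nontrivial ingredient is the equivariant Atiyah--Weil existence statement for indecomposable bundles; this is exactly what \cite{BL} provides, so the proof requires no new analytic input, and everything else is bookkeeping between the flag and $\mathbb{R}$-filtered descriptions.
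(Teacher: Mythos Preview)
The paper does not prove this theorem at all; it is simply stated with the attribution \cite{BL} and no proof environment follows. Your proposal is therefore not being compared against any argument in the paper, and its first paragraph---deducing the statement from \cite[Theorem~1.1]{BL} via the dictionary between the flag and $\mathbb{R}$-filtered descriptions---is exactly the intended reading. One correction: the authors of \cite{BL} are Biswas and Logares, not ``Biswas and Laza.''

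Your second and third paragraphs go beyond what the paper does by sketching how the proof in \cite{BL} itself works (reduction to indecomposable summands, passage to the orbifold cover, equivariant Atiyah--Weil). This is a reasonable and correct outline, and the ``only if'' direction via the residue formula is standard. Just be aware that none of this extra content is needed here: the paper treats the result as a black box, and your first paragraph already discharges the obligation of matching the $\mathbb{R}$-filtered formulation used in the appendix with the flag formulation used in \cite{BL}.
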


\subsection{Parabolic ${\rm SL}(2)$-opers}\label{parsl2oper}

We now define the parabolic analogue of the Gunning bundle. 
Given the parabolic weights $\alpha_{i,1}, \alpha_{i,2}$ for $1 \leq i \leq n$ satisfying the inequalities (2.3) and the
additional assumption $\alpha_{i,1} + \alpha_{i,2} = 1$ for all $i$, we introduce the real numbers
$$ \beta_i = \alpha_{2,i} - \alpha_{1,i} \in ]0,1[ $$
and we define 
$$ \underline{\beta} = (\beta_1, \beta_2, \ldots, \beta_n ) \in \mathbb{R}^n. $$
Let $K$ be the canonical bundle of the curve $X$. Then we define the canonical parabolic bundle by 
$$ K^{par}_* = K[- \underline{\beta}]_*, $$
where we equip $K$ with the special structure. We also define a parabolic theta-characteristic 
$\theta^{par}_*$ as a parabolic line bundle satisfying
$$ \theta^{par}_* \otimes \theta^{par}_* = K^{par}_*.$$
One can easily check that the parabolic line bundle $\theta^{par}_*$ (resp. $\left(\theta^{par}_*\right)^{-1}$) corresponds via the above 
correspondence to a line bundle $M$ (resp. $Q$) satisfying $M^2 = K(-S)$ (resp. $Q^2 = K^{-1}(-S)$) with 
parabolic weight $\alpha_{i,2}$ (resp. $\alpha_{i,1}$) at the parabolic point $x_i$ for $1 \leq i \leq n$.

We define the parabolic Gunning bundle $\mathcal{G}^{par}_*$ as the unique non-split parabolic extension 
\begin{equation} \label{espargunning}
0 \longrightarrow  \theta^{par}_* \longrightarrow \mathcal{G}^{par}_* \longrightarrow \left(\theta^{par}_*\right)^{-1}
\longrightarrow 0.
\end{equation}
We note that the space of parabolic extensions of $\left(\theta^{par}_*\right)^{-1}$ by $\theta^{par}_*$ is 
one-dimensional, since
\begin{eqnarray*}
 \dim \mathrm{Ext}^1(\left(\theta^{par}_*\right)^{-1}, \theta^{par}_*) & = & 
\dim H^1 ( \theta^{par}_* \otimes \theta^{par}_* ) \\
& = & \dim H^1 (K[- \underline{\beta}]_*) = \dim H^1(K) = 1. 
\end{eqnarray*}
Clearly, the parabolic bundle $\mathcal{G}^{par}_*$  has trivial parabolic determinant since
$$\mathrm{det}  \ \mathcal{G}^{par}_* = \theta^{par}_* \otimes \left(\theta^{par}_*\right)^{-1} = \mathcal{O}_*.$$
It is easy to check that the exact sequence 
(\ref{espargunning}) equals the exact sequence (3.4) in \cite{BDP} defining the underlying parabolic bundle 
of a parabolic ${\rm SL}(2)$-oper.

Furthermore, any parabolic connection $\nabla_*$ on $\mathcal{G}^{par}_*$ induces a second fundamental form, which
is a $\mathcal{O}_X$-linear parabolic homomorphism
$$ \psi: \theta^{par}_* \longrightarrow (\theta^{par}_*)^{-1} \otimes K[-1]_* \cong \theta^{par}_*[-1 + \underline{\beta}]. $$
The same argument as in the non-parabolic case shows that $\psi \not= 0$, since $\psi=0$ would imply the existence of a parabolic 
connection on the parabolic bundle $\theta^{par}_*$ having parabolic degree 
$g-1 + \frac{1}{2}(\sum_{i=1}^n \beta_i) > 0$, which contradicts Theorem \ref{exparconn}. Thus $\psi$
is a quasi-isomorphism of weight $1 - \underline{\beta}$.

\subsection{Parabolic ${\rm SL}(r)$-opers}

With the above introduced notation we give a new definition of a parabolic ${\rm SL}(r)$-oper.

\begin{definition}
A parabolic ${\rm SL}(r)$-oper is a triple $(E_*, {E_{\bullet}}_*, \nabla_*)$ consisting of a
rank-$r$ parabolic vector bundle $E_*$, a filtration ${E_{\bullet}}_*$ of $E_*$ by parabolic subbundles
$$ 0 = E_{0*} \subset E_{1*} \subset E_{2*} \subset \ldots \subset E_{r-1*} \subset E_{r*} = E_* $$
with $\mathrm{rk}(E_{i*}) = i$ and a parabolic connection $\nabla_*$ on $E_*$ satisfying the following
conditions
\begin{itemize}
    \item $\det(E_*, \nabla_*) = (\mathcal{O}_*, d_*)$
    \item $\nabla_*(E_{i*}) \subset E_{i+1*} \otimes K[-1]_*$ for any $i = 1, \ldots , r-1$
    \item There exists a vector $\underline{\beta} \in \mathbb{R}^n$ with $0 < \beta_i < 1$ such that 
    for any $i = 1, \ldots, r-1$ the parabolic homomorphisms induced by  $\nabla_*$  between parabolic line bundles
    $$ \left(E_{i*}/E_{i-1*} \right) \longrightarrow \left(E_{i+1*}/E_{i*} \right) \otimes K[-1]_* $$
    are quasi-isomorphisms of weight $1 - \underline{\beta}$ (see Definition (\ref{quasiiso})).
\end{itemize}
\end{definition}

\begin{remark}
If $r=2$ one can easily show that any parabolic ${\rm SL}(2)$-oper is of the form 
$(\mathcal{G}^{par}_*, \mathcal{G}^{par}_{\bullet*}, \nabla_*)$, where $\mathcal{G}^{par}_*$ is the parabolic
Gunning bundle introduced in section \ref{parsl2oper}, $\mathcal{G}^{par}_{\bullet*}$ is given by the exact sequence 
(\ref{espargunning}) and $\nabla_*$ is any parabolic connection satisfying $\det \nabla_* = d_*$.
\end{remark}

We now show that, similar to the non-parabolic case, the underlying parabolic bundle of a 
parabolic ${\rm SL}(r)$-oper is a parabolic symmetric power of the parabolic Gunning bundle.

\begin{theorem}
Let $(E_*, {E_{\bullet}}_*, \nabla_*)$ be a parabolic ${\rm SL}(r)$-oper associated to the vector 
$\underline{\beta} \in \mathbb{R}^n$. Then, up to tensor product with an $r$-torsion parabolic line bundle,
we have an isomorphism between parabolic bundles 
$$ E_* \cong \mathrm{Sym}^{r-1} \mathcal{G}^{par}_*,$$
where  $\mathcal{G}^{par}_*$ is the parabolic Gunning bundle associated to the vector 
$\underline{\beta} \in \mathbb{R}^n$.
Moreover, under this isomorphism the filtration ${E_{\bullet}}_*$ corresponds to the natural filtration
of $ \mathrm{Sym}^{r-1} \mathcal{G}^{par}_* $.
\end{theorem}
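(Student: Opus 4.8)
The plan is to mimic, in the parabolic $\mathbb{R}$-filtered setting, the classical argument that the underlying bundle of an $\mathrm{SL}(r)$-oper is the $(r-1)$st symmetric power of the Gunning bundle. The classical proof proceeds by showing that the successive quotients $E_{i*}/E_{i-1*}$ of the oper filtration form a string of parabolic line bundles differing by a fixed ``twist'' determined by the quasi-isomorphism condition, then pinning this twist down using the triviality of $\det(E_*,\nabla_*)$, and finally reconstructing $E_*$ as a sequence of extensions that is forced to coincide with $\mathrm{Sym}^{r-1}\mathcal{G}^{par}_*$.

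\textbf{Step 1: the graded pieces.} Write $L_{i*} := E_{i*}/E_{i-1*}$ for $1 \le i \le r$, a parabolic line bundle. The third condition in the definition says that $\nabla_*$ induces quasi-isomorphisms $L_{i*} \to L_{i+1*} \otimes K[-1]_*$ of weight $1 - \underline{\beta}$; by Definition \ref{quasiiso} this means precisely that $L_{i+1*} \otimes K[-1]_* \cong L_{i*}[-(1-\underline{\beta})]_*$, equivalently
$$ L_{i+1*} \,\cong\, L_{i*} \otimes K[-1]^\vee_* \otimes \mathcal{O}[-(1-\underline{\beta})]_* \,=\, L_{i*} \otimes \left(K^{par}_*\right)^\vee \otimes (\text{correction}), $$
where I would carefully track the shifts using the relations $E[\underline{\beta}]_* \otimes E'[\underline{\beta'}]_* = (E\otimes E')[\underline{\beta}+\underline{\beta'}]_*$ and $K^{par}_* = K[-\underline{\beta}]_*$ recorded in Section \ref{parsl2oper}. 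The upshot is that all the $L_{i*}$ are determined, up to one overall parabolic line bundle $N_*$, by a single parabolic line bundle $M_*$ with $M_*^{\otimes 2} \cong K^{par}_*$, i.e. a parabolic theta characteristic $\theta^{par}_*$; concretely one gets $L_{i*} \cong N_* \otimes \left(\theta^{par}_*\right)^{\otimes(r-2i+1)}$, matching the graded pieces of $\mathrm{Sym}^{r-1}\mathcal{G}^{par}_*$.

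\textbf{Step 2: killing the overall twist via the determinant.} Taking parabolic determinants of the filtration gives $\det(E_*) \cong \bigotimes_{i=1}^r L_{i*} \cong N_*^{\otimes r} \otimes \left(\theta^{par}_*\right)^{\otimes \sum(r-2i+1)}$; since $\sum_{i=1}^r (r-2i+1) = 0$, this is $N_*^{\otimes r}$. The first oper condition $\det(E_*,\nabla_*) = (\mathcal{O}_*, d_*)$ forces $\det(E_*) \cong \mathcal{O}_*$, hence $N_*^{\otimes r} \cong \mathcal{O}_*$: the line bundle $N_*$ is $r$-torsion. This is exactly the ``up to tensor product with an $r$-torsion parabolic line bundle'' in the statement. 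After tensoring by $N_*^\vee$ (which changes nothing essential and is absorbed in the isomorphism class), we may assume $N_* = \mathcal{O}_*$, so the graded pieces are literally those of $\mathrm{Sym}^{r-1}\mathcal{G}^{par}_*$.

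\textbf{Step 3: reconstructing the extension, and the filtration.} Now I would argue by induction on $r$ that the bundle $E_*$ with its filtration is isomorphic to $\mathrm{Sym}^{r-1}\mathcal{G}^{par}_*$ with its natural filtration. The inductive mechanism: $E_{r-1*}$ together with the restricted connection data is a parabolic $\mathrm{SL}(r-1)$-type oper object (one checks the quasi-isomorphism and filtration conditions are inherited), so by induction $E_{r-1*} \cong \mathrm{Sym}^{r-2}\mathcal{G}^{par}_*$ compatibly with filtrations; then $E_*$ is an extension
$$ 0 \longrightarrow \mathrm{Sym}^{r-2}\mathcal{G}^{par}_* \longrightarrow E_* \longrightarrow L_{r*} \longrightarrow 0, $$
and the second fundamental form of $\nabla_*$ along this extension — which is the quasi-isomorphism $L_{r*} \to L_{r-1*}\otimes K[-1]_*$, hence nonzero — shows via Section \ref{extparbun} (extensions classified by $\mathrm{Ext}^1 = H^1$ of the parabolic Hom-sheaf, which here is one-dimensional, just as in the $\mathrm{SL}(2)$ computation) that the extension class is nonzero and therefore unique up to scale. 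The nonsplit extension of the appropriate parabolic line bundles with a one-dimensional $\mathrm{Ext}^1$ is exactly $\mathrm{Sym}^{r-1}\mathcal{G}^{par}_*$ (this is how $\mathcal{G}^{par}_*$ itself is defined, and symmetric powers of a nonsplit extension remain ``as nonsplit as possible''). A clean way to finish is to observe that $\mathrm{Sym}^{r-1}$ applied to the defining sequence \eqref{espargunning} produces a bundle with precisely this filtration and graded pieces, equipped with the connection $\mathrm{Sym}^{r-1}\nabla_*$ of a parabolic connection $\nabla_*$ on $\mathcal{G}^{par}_*$; matching second fundamental forms identifies the two.

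\textbf{Main obstacle.} The delicate point is Step 3: controlling the extension class and showing that the filtration (not just the abstract bundle) is carried to the natural one. In the non-parabolic case this is standard, but here one must verify that the relevant parabolic $\mathrm{Ext}^1$ groups — i.e. $H^1(X, Hom(L_{r*}, E_{r-1*})_*)$ and its sub-extension-classes — are one-dimensional and that the oper's second fundamental form lands in the correct graded piece, using repeatedly the shift bookkeeping and the cohomology convention $H^1(X,E_*) = H^1(X,E_0)$. The quasi-isomorphism-of-weight-$(1-\underline{\beta})$ hypothesis is precisely what makes these $\mathrm{Ext}^1$'s collapse to dimension one (mirroring the $\dim H^1(K[-\underline\beta]_*) = \dim H^1(K) = 1$ computation in Section \ref{parsl2oper}), so the hypotheses are exactly strong enough; the work is in organizing the induction so that the filtration compatibility is automatic at each stage.
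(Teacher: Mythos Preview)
Your Steps 1 and 2 are correct and match the paper's argument essentially verbatim: the paper writes $Q_{i*}=E_{i*}/E_{i-1*}$, uses the quasi-isomorphisms to get $Q_{i*}=Q_{i+1*}\otimes K[-\underline\beta]_*$, iterates, and then uses $\det E_*=\mathcal O_*$ to conclude $Q_{r*}$ agrees with $(\theta^{par}_*)^{-(r-1)}$ up to $r$-torsion.

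Step 3 has a genuine gap. Your induction rests on the claim that ``$E_{r-1*}$ together with the restricted connection data is a parabolic $\mathrm{SL}(r-1)$-type oper object.'' But the oper condition only gives $\nabla_*(E_{r-1*})\subset E_{r*}\otimes K[-1]_*=E_*\otimes K[-1]_*$, not $\nabla_*(E_{r-1*})\subset E_{r-1*}\otimes K[-1]_*$; the connection does \emph{not} restrict to $E_{r-1*}$, so there is no inductive hypothesis to invoke. Relatedly, your argument that the extension $0\to E_{r-1*}\to E_*\to L_{r*}\to 0$ is non-split because the second fundamental form (which, by the way, goes $L_{r-1*}\to L_{r*}\otimes K[-1]_*$, not the direction you wrote) is nonzero is not valid as stated: a nonzero second fundamental form says $\nabla_*$ does not preserve $E_{r-1*}$, not that the holomorphic extension is non-split.

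The paper repairs exactly this point, and in a way that does not use induction. It shows directly that \emph{each} rank-two subquotient $E_{i+1*}/E_{i-1*}$ is the unique non-split extension of $Q_{i+1*}$ by $Q_{i*}$: if it were split, then using $\mathrm{Ext}^1(Q_{i+1*},E_{i-1*})=0$ (which follows from $\mathrm{Ext}^1(Q_{i+1*},Q_{j*})=0$ for $j\le i-1$) one lifts the splitting to $E_{i+1*}=E_{i*}\oplus Q_{i+1*}$; projecting $\nabla_*$ onto the $E_{i*}$-summand then yields a genuine parabolic connection on $E_{i*}$, contradicting $\mathrm{pardeg}(E_{i*})>0$ via Theorem~\ref{exparconn}. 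With all rank-two subquotients non-split and the graded pieces identified, the paper then invokes \cite[Theorem~4.7]{JP} to conclude that $(E_*,E_{\bullet*})$ is determined up to isomorphism, hence equals $\mathrm{Sym}^{r-1}\mathcal G^{par}_*$ with its natural filtration. Your Ext computation is in fact correct (the higher Ext's vanish for the same reason), but you need the paper's connection-plus-degree argument, applied at every step $i$, to get the non-splitness that makes it useful.
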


\begin{proof}
In order to simplify the notation we introduce the parabolic line bundles $Q_{i*} = E_{i*}/E_{i-1*}$ for
$i=1, \ldots, r$. Then the quasi-isomorphisms $Q_{i*} \rightarrow 
Q_{i+1*} \otimes K[-1]_*$ correspond to isomorphisms
$$ Q_{i*} = Q_{i+1*} \otimes K[-\underline{\beta}]_*$$
for $i = 1, \ldots, r-1$. Iterating these formulae we can express all line bundles in terms of $Q_{r*}$
$$ Q_{r-i*} = Q_{r*} \otimes K^i[-i \underline{\beta}]. $$
Since $\det E_* = \mathcal{O}_*$, we obtain that the parabolic tensor product of all $Q_{i*}$ equals $\mathcal{O}_*$,
which leads to the isomorphism
$$Q_{r*}^r = K^{-\frac{r(r-1)}{2}}[ \frac{r(r-1)}{2} \underline{\beta}].$$
We choose a parabolic theta-characteristic $\theta^{par}_*$, i.e., a parabolic line bundle satisfying
$(\theta^{par}_*)^2 = K^{par} = K[-\underline{\beta}]$. Then the above equality is equivalent to saying that
$Q_{r*}$ and $(\theta^{par}_*)^{-(r-1)}$ differ by an $r$-torsion line bundle. So, after tensorizing $E_*$ and consequently
all quotient line bundles $Q_{i*}$ by this $r$-torsion line bundle, we can assume that 
$Q_{r*} = (\theta^{par}_*)^{-(r-1)}$. From the above formulae, we immediately obtain that
$$Q_{r-i*} = (\theta^{par}_*)^{-(r-1) + 2i}$$
for $i= 0, \ldots, r-1$. Next we will show that the natural exact sequence 
$$ 0 \longrightarrow Q_{i*} \longrightarrow E_{i+1*}/E_{i-1*} \longrightarrow Q_{i+1*} 
\longrightarrow 0$$
is the unique non-split parabolic extension of $Q_{i+1*}$ by $Q_{i*}$. By section \ref{extparbun}
parabolic extensions are parameterized by $\mathrm{Ext}^1(Q_{i+1}*, Q_{i*})$ and we have
\begin{eqnarray*}
 \dim \mathrm{Ext}^1(Q_{i+1*}, Q_{i*}) & = & 
\dim H^1 ( Q_{i+1*}^\vee \otimes Q_{i*} ) \\
& = & \dim H^1 (K[- \underline{\beta}]_*) = \dim H^1(K) = 1. 
\end{eqnarray*}
We now show that $E_{i+1*}/E_{i-1*}$ is non-split. Suppose on the contrary that we have
a direct sum decomposition 
$$E_{i+1*}/E_{i-1*} = Q_{i*} \oplus Q_{i+1*}.$$
We claim that this splitting implies that the exact sequence 
\begin{equation} \label{eseiqi}
0 \longrightarrow E_{i*} \longrightarrow E_{i+1*} \longrightarrow Q_{i+1*} \longrightarrow 0     
\end{equation}
also splits. To see that, we consider the long exact sequence
$$ \cdots \longrightarrow \mathrm{Ext}^1(Q_{i+1*}, E_{i-1*}) \longrightarrow  \mathrm{Ext}^1(Q_{i+1*}, E_{i*})
\stackrel{\mu}{\longrightarrow}  \mathrm{Ext}^1(Q_{i+1*}, Q_{i*}) \longrightarrow \cdots $$
where $\mu$ is induced by the push-out under the map $E_{i*} \rightarrow Q_{i*}$. Thus, to show that
the exact sequence (\ref{eseiqi}) splits, it will be enough to show that $\mu$ is injective. But $\mathrm{Ext}^1(Q_{i+1*}, E_{i-1*}) = 0$,
since $E_{i-1*}$ can be constructed by a series of a successive extensions of $Q_{j*}$'s for $j\leq i-1$ and we have
$$\mathrm{Ext}^1(Q_{i+1*}, Q_{j*}) = 0 \qquad \text{for all} \ j \leq i-1. $$
Thus $E_{i+1*} = E_{i*} \oplus Q_{i+1*}$ and after projecting from $E_{i+1*}$ onto $E_{i*}$ the connection
$\nabla_*$ restricts to a connection on $E_{i*}$. But, for $i \leq r-1$ we have $\mathrm{pardeg}(E_{i*}) > 0$, which is the
desired contradiction by Theorem \ref{exparconn}.

Finally, we invoke Theorem 4.7 \cite{JP} to conclude that, since  the
rank-$2$ parabolic bundles $E_{i+1*}/ E_{i-1*}$ are the unique non-split parabolic extensions of 
$Q_{i+1*}$ by $Q_{i*}$ for all $i=1, \ldots,r-1$, the underlying parabolic
vector bundle $E_*$ is unique up to isomorphism. On the other hand, it is easily checked that the
parabolic symmetric power $\mathrm{Sym}^{r-1} \mathcal{G}^{par}_*$ also satisfies these properties, hence
by uniqueness
both parabolic vector bundles $E_*$ and $\mathrm{Sym}^{r-1} \mathcal{G}^{par}_*$ are isomorphic.

Note that Theorem 4.7 \cite{JP} deals with non-parabolic opers, but its extension to parabolic opers is
straightforward.
\end{proof}

\begin{remark}
The last theorem shows that the above definition of parabolic ${\rm SL}(r)$-oper coincides with \cite{BDP} Definition 5.2. 
\end{remark}


\end{document}